\newtheorem{theorem}{Theorem}[section]
\newtheorem{definition}[theorem]{Definition}
\newtheorem{conjecture}[theorem]{Conjecture}
\newtheorem{lemma}[theorem]{Lemma}
\newtheorem{remark}[theorem]{Remark}
\newcommand{\la}{\lambda}
\newcommand{\M}{{\cal M}}
\newcommand{\sw}{{\sf sw}}
\newcommand{\z}{{\bf z}}
\newcommand{\y}{{\bf y}}
\newcommand{\x}{{\bf x}}
\newcommand{\vertex}{\node[vertex]}
\tikzstyle{vertex}=[circle, draw, inner sep=0pt, minimum size=3pt]
\newcommand{\w}{\color{blue}}
\begin{document}

\title{Regular Graphs with Minimum Spectral Gap}
\author{ M. Abdi$^{\,\rm a,b}$ \quad  E. Ghorbani$^{\,\rm a,b}$ \quad  W. Imrich$^{\,\rm c}$
\\[.3cm]
{\sl\normalsize $^{\rm a}$Department of Mathematics, K. N. Toosi University of Technology,}\\
{\sl\normalsize P. O. Box 16765-3381, Tehran, Iran}\\
{\sl\normalsize $^{\rm b}$School of Mathematics, Institute for Research in Fundamental Sciences (IPM),}\\
{\sl\normalsize P. O. Box 19395-5746, Tehran, Iran }\\
{\sl\normalsize $^{\rm c}$Montanuniversit\"at Leoben, Leoben, Austria}}

\maketitle
\footnotetext{{\em E-mail Addresses}: {\tt m.abdi@email.kntu.ac.ir} (M. Abdi), {\tt e\_ghorbani@ipm.ir} (E. Ghorbani), {\tt wilfried.imrich@unileoben.ac.at} (W. Imrich)}

\begin{abstract}
Aldous and Fill conjectured that the maximum relaxation time for the random walk on a connected regular graph with $n$ vertices is $(1+o(1)) \frac{3n^2}{2\pi^2}$.
 This conjecture can be rephrased in terms of the spectral gap as follows:
  the spectral gap (algebraic connectivity) of a connected  $k$-regular graph on $n$ vertices is at least
 $(1+o(1))\frac{2k\pi^2}{3n^2}$, and the bound is attained for at least one value of $k$.
 Based upon previous work of Brand, Guiduli, and Imrich, we prove this conjecture for cubic graphs.
   We also investigate  the structure of quartic (i.e.~4-regular) graphs with the minimum spectral gap among all connected quartic graphs.
   We show that they must have a path-like structure built from specific blocks.
   \vspace{4mm}

\noindent {\bf Keywords:}  Spectral gap, Algebraic connectivity, Relaxation time, Cubic graph, Quartic graph \\[.1cm]
\noindent {\bf AMS Mathematics Subject Classification\,(2010):}   05C50, 60G50
\end{abstract}

\section{Introduction}

All graphs we consider are  simple, that is undirected graphs without loops or multiple edges.
The difference between the two largest eigenvalues of the adjacency matrix of a graph $G$ is called the {\em spectral gap} of $G$.
If $G$ is a regular graph, then its spectral gap is equal to the second smallest eigenvalue of its Laplacian matrix  and
known as {\em algebraic connectivity}.

 In $1976$,  Bussemaker, \v Cobelji\'c, Cvetkovi\'c, and Seidel (\cite{Bussemaker},  see also \cite{Bussemaker2}), by means of a computer search, found  all non-isomorphic connected cubic graphs with $n \leq 14$ vertices.  They observed that when the algebraic connectivity is small the graph is long. Indeed, as the algebraic connectivity decreases, both connectivity and girth decrease and diameter increases.  Based on these results, L. Babai (see \cite{Guiduli}) made a conjecture that described the structure of the connected cubic graph with minimum algebraic connectivity.
Guiduli \cite{Guiduli} (see also \cite{GuiduliThesis}) proved that the cubic graph with minimum algebraic connectivity must look like a path, built from specific blocks.
The result of Guiduli was improved as follows confirming the Babai's conjecture.

\begin{theorem}[Brand, Guiduli, and Imrich \cite{Imrich}]\label{thm:cubicBGI}
 Among all connected cubic graphs on $n$ vertices,  $n \geq 10$, the graph $G_n$ (given in Figure~\ref{fig:Gn}) is the unique graph with minimum algebraic connectivity.
 \end{theorem}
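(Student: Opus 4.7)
The plan is to build on Guiduli's earlier structural theorem, which asserts that a cubic graph of minimum algebraic connectivity must have a \emph{path-like} architecture: two end-blocks $B_\ell$, $B_r$ drawn from a short explicit list, joined by a long chain of $m$ identical internal blocks $B$. This reduces the infinite minimization problem to a comparison among finitely many one-parameter families $G(B_\ell,m,B_r)$, parametrized by the choice of end-blocks; the candidate $G_n$ corresponds to one specific such pair. So the task becomes: among these finitely many families, isolate the one whose algebraic connectivity is smallest (and strictly smallest) for every $n\geq 10$.

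Next I would set up the Fiedler eigenvalue equation $Lf=\mu f$ with $f\perp\mathbf 1$ along such a path-like graph. Inside the chain of $m$ internal copies of $B$, the restriction of $f$ satisfies a linear recurrence whose transfer matrix $T(\mu)$ acts on the "boundary data" of each block. For small $\mu$, the eigenvalues of $T(\mu)$ are of the form $e^{\pm i\theta}$ with $\theta=\theta(\mu)\to 0$, so an ansatz $f(v)\sim\cos(\theta\, x(v)+\varphi)$ plus a block-periodic correction should solve the bulk equation. The end-blocks impose two boundary conditions which quantize $\theta$ through an implicit equation of the form $\Phi_{B_\ell}(\theta)+\Phi_{B_r}(\theta)+m\theta=\pi+O(\theta)$, where $\Phi_{B_\ell},\Phi_{B_r}$ are "boundary phase-shifts" depending only on the end-block. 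Solving for the smallest positive root and substituting back into $\mu=\mu(\theta)$ yields an asymptotic expansion
\[
\mu\bigl(G(B_\ell,m,B_r)\bigr)=\frac{c}{n^{2}}+\frac{d(B_\ell,B_r)}{n^{3}}+O(n^{-4}),
\]
with $c$ the same universal constant for all admissible choices and $d(B_\ell,B_r)$ an explicit function of the phase-shifts.

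The main obstacle is the uniqueness claim: the leading coefficient $c$ is shared by every path-like competitor, so the comparison is decided by the subleading coefficient $d(B_\ell,B_r)$ (and, in borderline cases, by still lower-order terms). I would therefore compute $\Phi_B(\theta)$ explicitly for each admissible end-block $B$ by diagonalizing a small block-Laplacian, expand to two orders in $\theta$, and check that the pair realizing $G_n$ strictly minimizes $d$. A delicate point is that two competing pairs may share the same $d$, in which case the argument must be pushed to the next order, or a monotonicity-in-$m$ argument must be invoked to compare $\Phi_{B_\ell}+\Phi_{B_r}$ directly.

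Finally, the asymptotic analysis is only valid for $n$ large enough that the $O(n^{-4})$ error term is dominated by the gap between the two smallest values of $d$; for the finitely many remaining $n\geq 10$ below this threshold, one finishes by a direct computation of the characteristic polynomials of the few competing graphs, either symbolically or by an eigenvalue interlacing argument that compares $G_n$ to its closest competitor through a single edge-swap perturbation. Combining Guiduli's structure theorem, the explicit phase-shift computation, and this finite verification gives the uniqueness of $G_n$ as the minimizer for all $n\geq 10$.
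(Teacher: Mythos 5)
First, note that the paper does not prove this statement at all: Theorem~\ref{thm:cubicBGI} is imported verbatim from Brand, Guiduli, and Imrich, and the proof there (mirrored by this paper's own treatment of the quartic case in Section~3) is purely combinatorial. It runs by showing that proper switchings, which never increase the algebraic connectivity, transform any connected cubic graph into the path-like candidate, and then by a rigidity argument (the analogue of Theorem~\ref{thm:HisoG}) showing that if no switching strictly decreased $\mu$ then the original graph was already isomorphic to $G_n$. Your proposal takes a genuinely different, analytic route: a transfer-matrix/phase-shift asymptotic expansion of the Fiedler eigenvalue along the chain of middle blocks. If carried out it would buy more than the combinatorial proof, since it would also deliver the asymptotic value $\mu(G_n)=(1+o(1))\frac{2\pi^2}{n^2}$, which is exactly what this paper has to prove separately as Theorem~\ref{thm:cubic}.

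However, as written the proposal has two genuine gaps. The first concerns uniqueness among \emph{all} connected cubic graphs rather than among the path-like families. Guiduli's structure theorem is obtained by $\mu$-non-increasing switchings, so on its own it yields that every cubic graph can be transformed into a path-like one without increasing $\mu$; it does not by itself exclude a non-path-like graph attaining the same minimum. Closing this requires precisely the rigidity step of the switching proof (a tie in the Rayleigh quotient forces the switch to be an isomorphism), and your scheme has no substitute for it: you treat the reduction to finitely many families $G(B_\ell,m,B_r)$ as already settled, which it is not. The second gap is effectivity. The decision between competing end-block pairs happens at order $n^{-3}$ against an $O(n^{-4})$ error, so your comparison only applies for $n\ge N_0$ with $N_0$ determined by explicit error constants you have not computed; the ``finitely many remaining $n\ge10$'' is therefore an unbounded amount of unfinished work until those constants are tracked, and the possibility you yourself flag --- two families sharing the same subleading coefficient $d(B_\ell,B_r)$ --- would force the expansion to yet another order. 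Both issues are repairable in principle, but neither is a detail: the first is where the uniqueness claim actually lives, and the second is where ``for all $n\ge10$'' rather than ``for $n$ sufficiently large'' is earned.
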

  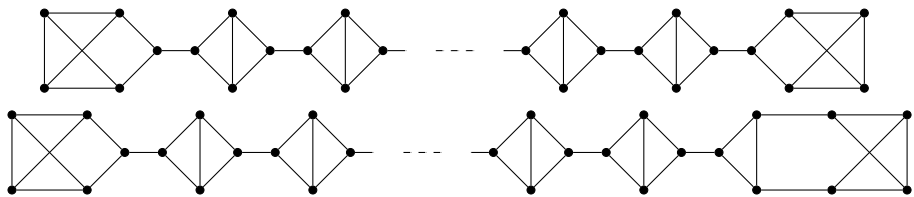
\begin{figure}[h!]
 \centering
\begin{tikzpicture}
	\vertex[fill] (1) at (0,-.5) [] {};
	\vertex[fill] (2) at (0,.5) [] {};
	\vertex[fill] (3) at (1,-.5) [] {};
	\vertex[fill] (4) at (1,.5) [] {};
    \vertex[fill] (5) at (1.5,0) [] {};
    \vertex[fill] (6) at (2,0) [] {};
    \vertex[fill] (7) at (2.5,.5) [] {};
    \vertex[fill] (8) at (2.5,-.5) [] {};
    \vertex[fill] (9) at (3,0) [] {};
    \vertex[fill] (10) at (3.5,0) [] {};
    \vertex[fill] (11) at (4,.5) [] {};
    \vertex[fill] (12) at (4,-.5) [] {};
    \vertex[fill] (13) at (4.5,0) [] {};
     \vertex[fill] (22) at (6.4,0) [] {};
    \vertex[fill] (23) at (6.9,.5) [] {};
    \vertex[fill] (24) at (6.9,-.5) [] {};
    \vertex[fill] (25) at (7.4,0) [] {};
    \vertex[fill] (26) at (7.9,0) [] {};
    \vertex[fill] (27) at (8.4,.5) [] {};
    \vertex[fill] (28) at (8.4,-.5) [] {};
    \vertex[fill] (29) at (8.9,0) [] {};
    \vertex[fill] (34) at (10.9,-.5) [] {};
	\vertex[fill] (33) at (10.9,.5) [] {};
	\vertex[fill] (32) at (9.9,-.5) [] {};
	\vertex[fill] (31) at (9.9,.5) [] {};
    \vertex[fill] (30) at (9.4,0) [] {};
    \tikzstyle{vertex}=[circle, draw, inner sep=0pt, minimum size=0pt]
    \vertex[fill] (14) at (4.8,0) [] {};
    \vertex[fill] (15) at (5.2,0) [] {};
    \vertex[fill] (16) at (5.3,0) [] {};
    \vertex[fill] (17) at (5.4,0) [] {};
    \vertex[fill] (18) at (5.5,0) [] {};
    \vertex[fill] (19) at (5.6,0) [] {};
    \vertex[fill] (20) at (5.7,0) [] {};
    \vertex[fill] (21) at (6.1,0) [] {};
	\path
		(1) edge (2)
		(1) edge (3)
	    (1) edge (4)
		(2) edge (3)
        (2) edge (4)
		(3) edge (5)
	    (4) edge (5)
	    (5) edge (6)
	    (6) edge (7)
	    (6) edge (8)
	    (7) edge (8)
	    (7) edge (9)
	    (8) edge (9)
	    (9) edge (10)
	    (10) edge (11)
	    (10) edge (12)
	    (11) edge (12)
	    (11) edge (13)
	    (12) edge (13)
	    (13) edge (14)
	   (15) edge (16)
	   (17) edge (18)
	   (19) edge (20)
	    (21) edge (22)
	    (22) edge (23)
	    (22) edge (24)
	    (23) edge (24)
	    (23) edge (25)
	    (24) edge (25)
	    (25) edge (26)
	    (26) edge (27)
	    (26) edge (28)
	    (27) edge (28)
	    (27) edge (29)
	    (28) edge (29)
	     (29) edge (30)
	      (30) edge (31)
	       (30) edge (32)
	        (31) edge (33)
	        (31) edge (34)
	       (32) edge (33)
	        (32) edge (34)
	         (33) edge (34);
\end{tikzpicture}
\vspace{.2cm} \\
\begin{tikzpicture}
	\vertex[fill] (1) at (0,-.5) [] {};
	\vertex[fill] (2) at (0,.5) [] {};
	\vertex[fill] (3) at (1,-.5) [] {};
	\vertex[fill] (4) at (1,.5) [] {};
    \vertex[fill] (5) at (1.5,0) [] {};
    \vertex[fill] (6) at (2,0) [] {};
    \vertex[fill] (7) at (2.5,.5) [] {};
    \vertex[fill] (8) at (2.5,-.5) [] {};
    \vertex[fill] (9) at (3,0) [] {};
    \vertex[fill] (10) at (3.5,0) [] {};
    \vertex[fill] (11) at (4,.5) [] {};
    \vertex[fill] (12) at (4,-.5) [] {};
    \vertex[fill] (13) at (4.5,0) [] {};
     \vertex[fill] (22) at (6.4,0) [] {};
    \vertex[fill] (23) at (6.9,.5) [] {};
    \vertex[fill] (24) at (6.9,-.5) [] {};
    \vertex[fill] (25) at (7.4,0) [] {};
    \vertex[fill] (26) at (7.9,0) [] {};
    \vertex[fill] (27) at (8.4,.5) [] {};
    \vertex[fill] (28) at (8.4,-.5) [] {};
    \vertex[fill] (29) at (8.9,0) [] {};
    \vertex[fill] (34) at (10.9,-.5) [] {};
	\vertex[fill] (33) at (10.9,.5) [] {};
	\vertex[fill] (32) at (9.9,-.5) [] {};
	\vertex[fill] (31) at (9.9,.5) [] {};
    \vertex[fill] (30) at (9.4,0) [] {};
   \vertex[fill] (35) at (11.9,-.5) [] {};
	\vertex[fill] (36) at (11.9,.5) [] {};
    \tikzstyle{vertex}=[circle, draw, inner sep=0pt, minimum size=0pt]
    \vertex[fill] (14) at (4.8,0) [] {};
    \vertex[fill] (15) at (5.2,0) [] {};
    \vertex[fill] (16) at (5.3,0) [] {};
    \vertex[fill] (17) at (5.4,0) [] {};
    \vertex[fill] (18) at (5.5,0) [] {};
    \vertex[fill] (19) at (5.6,0) [] {};
    \vertex[fill] (20) at (5.7,0) [] {};
    \vertex[fill] (21) at (6.1,0) [] {};
	\path
		(1) edge (2)
		(1) edge (3)
	    (1) edge (4)
		(2) edge (3)
        (2) edge (4)
		(3) edge (5)
	    (4) edge (5)
	    (5) edge (6)
	    (6) edge (7)
	    (6) edge (8)
	    (7) edge (8)
	    (7) edge (9)
	    (8) edge (9)
	    (9) edge (10)
	    (10) edge (11)
	    (10) edge (12)
	    (11) edge (12)
	    (11) edge (13)
	    (12) edge (13)
	    (13) edge (14)
	   (15) edge (16)
	   (17) edge (18)
	   (19) edge (20)
	    (21) edge (22)
	    (22) edge (23)
	    (22) edge (24)
	    (23) edge (24)
	    (23) edge (25)
	    (24) edge (25)
	    (25) edge (26)
	    (26) edge (27)
	    (26) edge (28)
	    (27) edge (28)
	    (27) edge (29)
	    (28) edge (29)
	     (29) edge (30)
	      (30) edge (31)
	       (30) edge (32)
	        (31) edge (32)
	        (31) edge (33)
	       (32) edge (34)
	        (33) edge (35)
	         (34) edge (36)
	        (33) edge (36)
	         (34) edge (35)
	         (35) edge (36);
\end{tikzpicture}
\caption{The cubic graph $G_n$,  $n \geq 10$, with minimum spectral gap on $n\equiv2\pmod4$ and $n\equiv0\pmod4$ vertices, respectively}
\label{fig:Gn}
\end{figure}

The {\em relaxation time} of the random walk on a graph $G$  is defined by $\tau=1/(1-\eta_2)$, where $\eta_2$ is the second largest eigenvalue of
the {\em transition matrix} of $G$, that is the matrix $D^{-1} A$ in which $D$ and $A$ are the diagonal matrix of vertex degrees and the adjacency matrix of $G$, respectively.
A central problem in the study of random walks is to determine the {\em mixing time}, a measure of how fast the
random walk converges to the stationary distribution. As seen throughout the literature \cite{aldous2002reversible,chung}, the  relaxation time is the primary term controlling mixing time. Therefore, relaxation time is directly associated with the rate of convergence  of the random walk.

Our main motivation in this work is the following conjecture on the maximum relaxation time of the random walk in regular graphs.

\begin{conjecture}[Aldous and Fill {\cite[p.~217]{aldous2002reversible}}]\rm
Over all connected regular graphs on $n$ vertices, $\max \tau =(1+o(1)) \frac{3n^2}{2\pi^2}$.
\end{conjecture}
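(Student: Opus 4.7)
The plan is to exploit the identity $\tau=k/\mu(G)$, valid for any connected $k$-regular graph $G$ on $n$ vertices (with $\mu(G)$ its algebraic connectivity); this recasts the conjecture as
\[
\min_{G}\frac{\mu(G)}{k}=(1+o(1))\,\frac{2\pi^{2}}{3n^{2}},
\]
the minimum ranging over all connected regular graphs on $n$ vertices. The statement has two halves: (i) achievability by some explicit sequence, and (ii) a universal lower bound on $\mu/k$.

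For achievability I would take the cubic graph $G_{n}$ of Theorem~\ref{thm:cubicBGI} and prove the precise asymptotic $\mu(G_{n})=(1+o(1))\,2\pi^{2}/n^{2}$, which immediately yields $\tau(G_{n})=(1+o(1))\,3n^{2}/(2\pi^{2})$. The upper bound on $\mu(G_{n})$ comes from a Rayleigh quotient with the cosine test vector $f(v)=\cos(\pi p(v)/N)$, where $p(v)\in\{0,1,\dots,N\}$ indexes the block containing $v$ along the chain axis of $G_{n}$ and $N=\Theta(n)$ is the number of blocks; since vertices of a single block receive equal values, only the $\Theta(n)$ bridge edges contribute to the Laplacian quadratic form, and a Taylor expansion of $\cos$ yields the constant. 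The matching lower bound comes from showing that a Fiedler vector of $G_{n}$, after normalisation, is uniformly close to the first Laplacian eigenfunction on the continuous unit interval, a comparison of the type already used by Brand, Guiduli and Imrich \cite{Imrich} to obtain Theorem~\ref{thm:cubicBGI}.

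The universal lower bound splits by degree. For $k=2$ the graph is the cycle $C_{n}$ and $\mu(C_{n})=2(1-\cos(2\pi/n))\sim 4\pi^{2}/n^{2}$ gives $\tau\sim n^{2}/(2\pi^{2})$, comfortably below the conjectured value. For $k=3$ the bound follows from Theorem~\ref{thm:cubicBGI} combined with the asymptotic computation above. For $k\geq 4$ the natural route is to replicate the Babai--Guiduli--Imrich programme: prove that the minimiser of $\mu$ among connected $k$-regular graphs on $n$ vertices also has a chain-of-blocks structure analogous to $G_{n}$, and then compute its algebraic connectivity asymptotically; the structural half of this programme for $k=4$ is exactly what the present paper initiates.

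The principal obstacle is the $k\geq 4$ case. Even granting a Babai-type classification for each $k$, the constant $2k\pi^{2}/3$ has to emerge from a cosine calculation on a chain of $N=\Theta(n)$ blocks, so the effective \emph{conductance} of a single $k$-regular gadget must combine with $\pi^{2}/N^{2}$ to reproduce exactly this value --- a verification that is routine block by block once the block shape is identified, but requires a fresh calculation for each $k$. In the absence of such a classification one would need an isoperimetric or effective-resistance inequality simultaneously sharp in $n$ and $k$; the textbook bounds $\mu\geq 4/(nD)$ of Mohar and $\mu\geq c/D^{2}$ give only the correct order of magnitude $k/n^{2}$, not the correct constant. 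My plan therefore settles the conjecture for $k\in\{2,3\}$ unconditionally and reduces the cases $k\geq 4$ to a structural classification plus a per-$k$ block eigenvalue computation.
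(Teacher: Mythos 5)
This statement is a conjecture; the paper does not prove it in full. What the paper actually establishes is the case $k=3$ (Theorem~\ref{thm:cubic}: the minimum algebraic connectivity of cubic graphs is $(1+o(1))\tfrac{2\pi^2}{n^2}$, hence $\max\tau=(1+o(1))\tfrac{3n^2}{2\pi^2}$ over cubic graphs), together with a structural reduction for $k=4$. Your overall reduction --- $\tau=k/\mu$, achievability via $G_n$, trivial check for $k=2$, and deferral of $k\ge4$ to a Babai-type classification --- is exactly the paper's programme, so at the level of strategy you and the paper agree that only $k\in\{2,3\}$ can be settled.

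However, your achievability argument contains a concrete quantitative error. The test vector you propose is \emph{constant on each block}, so that only the bridge edges contribute to the quadratic form. Each middle block of $G_n$ is a diamond whose effective resistance between its two cut vertices equals $1$, the same as a bridge; the blocks therefore account for half the total end-to-end resistance of the chain. A block-constant cosine vector sees only the $\approx n/4$ bridges rather than the full resistance $\approx n/2$, and a direct computation (with $N\approx n/4$ blocks of $4$ vertices each) gives a Rayleigh quotient of $\approx \pi^2/(4N^2)=4\pi^2/n^2$, i.e.\ only $\mu(G_n)\le(1+o(1))\tfrac{4\pi^2}{n^2}$. This is off by a factor of $2$ and yields merely $\tau(G_n)\ge(1+o(1))\tfrac{3n^2}{4\pi^2}$, which does not attain the conjectured value. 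The paper's test vector avoids this by varying \emph{inside} each block: the two middle vertices of each diamond receive the average $\tfrac12(x_{2i-1}+x_{2i})$ of the values at the adjacent cut vertices, so the potential drops across the interior of every block as well as across every bridge, doubling the effective number of steps and recovering the constant $2\pi^2$. Your lower-bound sketch (comparison with the continuum eigenfunction) is also vaguer than the paper's, which collapses the Fiedler vector onto the $2m+6$ cut and end vertices and compares with $\mu(P_{2m+6})$ via the inequalities \eqref{u} and \eqref{d}; and you omit the parity issue ($n\equiv0\pmod4$ requires a separate interlacing-style comparison of $G_n$ with $H_{n+2}$ and $G_{n-2}$). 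The factor-of-$2$ defect in the test vector is the gap that must be repaired before even the $k=3$ attainment goes through.
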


In terms of the eigenvalues of the {\em normalized Laplacian matrix}, that is the matrix $I-D^{-1/2}AD^{-1/2}$, the Aldous--Fill conjecture says that the minimum second smallest eigenvalue of the normalized Laplacian matrices of all connected regular graphs on $n$ vertices is
 $(1+o(1))\frac{2\pi^2}{3n^2}$. This can be rephrased  in terms of the spectral gap as follows, giving another equivalent statement of the Aldous--Fill conjecture.

\begin{conjecture}\rm  The spectral gap (algebraic connectivity) of a connected  $k$-regular graph on $n$ vertices is at least
 $(1+o(1))\frac{2k\pi^2}{3n^2}$, and the bound is attained at least for one value of $k$.
\end{conjecture}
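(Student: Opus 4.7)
The statement splits into an attainability claim (equality for at least one $k$) and a universal lower bound. I would prove them separately, with Theorem~\ref{thm:cubicBGI} doing the heavy lifting for the first.

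For attainability, take $k=3$. Theorem~\ref{thm:cubicBGI} identifies the unique extremal cubic graph $G_n$ for $n\geq 10$, so what remains is an explicit asymptotic evaluation of $\lambda_2(G_n)$. The graph $G_n$ is a chain of $\Theta(n)$ bounded-size blocks joined by bridges; the involutions fixing each block reduce the eigenvalue equation to a scalar linear recurrence in the block index. A transfer-matrix analysis of that recurrence---whose continuum limit is the problem $-u''=\mu u$ on an interval with suitable boundary conditions---yields $\lambda_2(G_n)=(1+o(1))\frac{2\pi^2}{n^2}=(1+o(1))\frac{2\cdot 3\cdot\pi^2}{3n^2}$, so the conjectured bound is attained at $k=3$.

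For the universal lower bound, the cases $k=2$ (the cycle $C_n$, with $\lambda_2=2(1-\cos(2\pi/n))\sim 4\pi^2/n^2>\tfrac{4\pi^2}{3n^2}$) and $k=3$ (Theorem~\ref{thm:cubicBGI} combined with the calculation above) are immediate. For $k\geq 4$ the plan is variational: given a $\lambda_2$-eigenvector $f$ with $\mathbf{1}^Tf=0$, one has $\lambda_2=\sum_{uv\in E}(f(u)-f(v))^2/\|f\|^2$. Fiedler's characteristic-valuation theorems let us order the vertices $v_1,\dots,v_n$ compatibly with $f$; a discrete one-dimensional Poincar\'e inequality along that ordering yields $\sum_i(f(v_{i+1})-f(v_i))^2\geq(1+o(1))\frac{\pi^2}{n^2}\|f\|^2$, and the $k$-regular structure must then be used to inflate this to the target factor $\frac{2k}{3}$ by accounting for the $k$ edges at each vertex and their placement in the extremal graph.

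The main obstacle is the sharp constant $\frac{2\pi^2}{3}$. Standard bounds---Mohar's diameter bound $\lambda_2\geq 4/(n\,\operatorname{diam}(G))$, or Cheeger-type isoperimetric bounds---give the correct order $\Theta(k/n^2)$ on path-like graphs but fall short of the exact constant by a fixed factor. Closing that gap requires a structural result showing that extremal $k$-regular graphs are asymptotically path-like in a form strong enough that their normalized Laplacian spectra converge to the continuum Dirichlet eigenvalues on an interval against a canonical density. For $k=3$ this reduction demanded the full structural effort behind Theorem~\ref{thm:cubicBGI}; for $k=4$ the present paper yields only partial structural information; for general $k$ no such theorem is in the literature. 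Producing either a sharp discrete Poincar\'e inequality for $k$-regular graphs, or a compactness argument on the space of normalized $k$-regular metric graphs forcing convergence to the continuum model, looks indispensable, and absent such an input the approach above recovers only the correct order of magnitude rather than the conjectured constant.
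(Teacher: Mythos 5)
The statement you are proving is a conjecture, and the paper does not prove it in full generality either: its actual contribution is the case $k=3$ (Theorem~\ref{thm:cubic}), namely that the minimum algebraic connectivity of connected cubic graphs on $n$ vertices is $(1+o(1))\frac{2\pi^2}{n^2}$, which settles both the lower bound and the attainability claim for that single value of $k$. Your proposal correctly isolates this as the provable core, and you are candid that the universal lower bound for all $k\ge4$ is out of reach with current tools --- which matches the state of the paper, where even $k=4$ gets only a structural theorem plus a further conjecture. So there is no gap relative to what the paper establishes; the difference is in method. Where you propose a transfer-matrix analysis of the block recurrence with a continuum limit to $-u''=\mu u$, the paper argues more elementarily in two halves: for the upper bound it plugs the explicit skew-symmetric trial vector $x_i=\cos\bigl(\frac{(2i-1)\pi}{4m}\bigr)$ (extended across blocks) into the Rayleigh quotient and evaluates it with trigonometric identities; for the lower bound it takes a Fiedler vector, uses the monotonicity result of Lemma~\ref{lem:sign} to contract each block onto its cut vertices, and shows the resulting Rayleigh quotient dominates half that of a path $P_{2m+6}$, whence Fiedler's formula $\mu(P_h)=2(1-\cos\frac{\pi}{h})$ finishes the computation; the residue class $n\equiv0\pmod4$ is handled by an interpolation between $G_{n-2}$ and an auxiliary graph $H_{n+2}$. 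Your transfer-matrix route is plausible but leaves the boundary conditions and the error control in the continuum limit unexamined, which is precisely where the work lies; the paper's trial-vector-plus-path-comparison argument avoids that machinery entirely and is fully rigorous as written. For the remaining parts of the conjecture (sharp constant for every $k$), neither your Poincar\'e-inequality sketch nor anything in the paper closes the factor between the order-of-magnitude bounds and $\frac{2k\pi^2}{3}$, and your diagnosis of why --- the absence of a structural theorem forcing extremal $k$-regular graphs to be path-like --- is accurate.
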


It is worth mentioning that in \cite{actt}, it is proved that the maximum relaxation time for the random walk on a  connected graph on $n$ vertices is $(1 +o(1))\frac{n^3}{54}$ settling another conjecture by Aldous and Fill (\cite[p.~216]{aldous2002reversible}).

    In \cite{Imrich}, it is mentioned without proof that the algebraic connectivity of the graphs $G_n$ (of Theorem~\ref{thm:cubicBGI}) is $(1+o(1))\frac{2\pi^2}{n^2}$, where its proof is postponed to another paper which has not appeared.
    We prove this equality,   thus, showing that  the minimum spectral gap  of connected cubic graphs on $n$ vertices is $(1+o(1))\frac{2\pi^2}{n^2}$, which implies the Aldous--Fill  conjecture for $k=3$.
     As the next case of the Aldous--Fill  conjecture and as a continuation of Babai's conjecture, we investigate the connected quartic, i.e.~$4$-regular, graphs with minimum spectral gap.
      We show that similar to the cubic case, these graphs must have a path-like structure with specified blocks (see Theorem~\ref{thm:quartic} below).
       Finally, we put forward a conjecture
       about the unique structure of the connected quartic graph  of any order with minimum spectral gap.

 \section{Minimum spectral gap of cubic graphs}
In this section, we prove that  the minimum spectral gap  of connected cubic graphs on $n$ vertices is $(1+o(1))\frac{2\pi^2}{n^2}$.

  Let $G$ be a graph on $n$ vertices and $L(G)=D-A$ be its Laplacian matrix. For any $\x\in \mathbb{R}^n$, the value
 $\frac{\x^\top L(G)\x}{\x^\top\x}$ is called a {\em Rayleigh quotient}.
 We denote the second smallest eigenvalue of $L(G)$ known as {\em the algebraic connectivity} of $G$  by $\mu(G)$. It is well known that
 \begin{equation}\label{eq}
 \mu(G)=\min_{\x\ne\bf0,\,\x\perp\bf1}\frac{\x^\top L(G)\x}{\x^\top\x},
 \end{equation}
 where $\bf1$ is the all-$1$ vector. An eigenvector corresponding to $\mu(G)$ is known as a {\em Fiedler vector} of $G$.
In passing we note that if $\x=(x_1,\ldots,x_n)^\top$, then
 $$\x^\top L(G)\x=\sum_{ij\in E(G)}(x_i-x_j)^2,$$
 where $E(G)$ is the edge set of $G$.

Considering the graphs $G_n$ of Theorem~\ref{thm:cubicBGI}, we  let $\Pi=\{C_1, C_2, \ldots , C_k\}$ (numbered consecutively from left to right) be a partition of the vertex set $V(G_n)$ such that each cell $C_i$ has size 1 or 2,  consisting of the vertices drawn vertically above each other as depicted in Figure~\ref{fig:Gn}.  
 We note in passing that partition $\Pi$ is a so-called `equitable partition' of $G_n$.

  \begin{lemma}[\cite{Imrich}] \label{lem:sign}
 Let $\x$ be a Fiedler vector of $G_n$.
 \begin{itemize}
   \item[\rm(i)] Then the components of $\x$ on each cell $C_i$ of the partition $\Pi$ are equal.
   \item[\rm(ii)]  Let $x_1, \ldots, x_k$ be the values of $\x$ on the cells of $\Pi$. Then  the $x_i$ form a strictly monotone sequence changing sign once.
 \end{itemize}
  \end{lemma}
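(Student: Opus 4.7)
Plan for (i). Since $\Pi$ is equitable, $\mathbb{R}^n=V_1\oplus V_2$ is an $L$-invariant orthogonal decomposition, with $V_1$ the cell-constant vectors and $V_2$ the vectors summing to zero on each cell. A Fiedler vector decomposes accordingly, so it suffices to show $\mu(G_n)\notin\mathrm{spec}(L|_{V_2})$. The action of $L$ on $V_2$ is local: a short per-block case analysis (reducing to $1$- or $2$-dimensional subspaces attached to the twin cells, and in particular acting as the scalar $4$ on adjacent-twin cells and as $3$ on non-adjacent-twin cells of interior blocks) yields $\mathrm{spec}(L|_{V_2})\ge c$ for some $c>1$. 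Since any singleton cell is a cut vertex of $G_n$, $\mu(G_n)\le\kappa(G_n)=1<c$, so the Fiedler eigenspace lies in $V_1$, proving (i).

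Plan for (ii). By (i), $\y=(x_1,\ldots,x_k)$ is an eigenvector for $\mu(G_n)$ of the Laplacian quotient matrix $B^L$ associated with $\Pi$. Because only consecutive cells of $\Pi$ are adjacent in $G_n$, $B^L$ is tridiagonal with negative off-diagonals, irreducible, and row-sum zero; its conjugate $D^{1/2}B^LD^{-1/2}$ (with $D=\mathrm{diag}(|C_i|)$) is a symmetric irreducible Jacobi matrix with the same spectrum, and since $\mathrm{spec}(B^L)\subseteq\mathrm{spec}(L)$ contains $0$, $\mu(G_n)$ is the smallest positive eigenvalue of $B^L$. The classical oscillation theorem then forces $\y$ to have exactly one sign change. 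For strict monotonicity, set $L_i:=-B^L_{i,i-1}>0$ and $R_i:=-B^L_{i,i+1}>0$; the eigenvalue equation $L_i(y_i-y_{i-1})+R_i(y_i-y_{i+1})=\mu y_i$, combined with $d_i:=y_i-y_{i+1}$, becomes the recurrence $R_id_i=\mu y_i+L_id_{i-1}$ with boundary data $d_1=\mu y_1/R_1$ and $d_{k-1}=-\mu y_k/L_k$. Assume WLOG $y_1>0$ and let $p$ be the largest index with $y_p>0$: induction from the left gives $d_1,\ldots,d_{p-1}>0$, the mirror-image induction from the right gives $d_{p+1},\ldots,d_{k-1}>0$, and $d_p>0$ is immediate from the sign change $y_p>0\ge y_{p+1}$. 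Hence $y_1>y_2>\cdots>y_k$.

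\emph{Main obstacle.} The principal structural input is the equitable-partition reduction in (i); the case analysis of $L|_{V_2}$ is routine but slightly fiddly, especially at the right-end block when $n\equiv0\pmod4$, where the attached twin pairs are not exact twins and a small mixing matrix appears. The expected subtlety in (ii) is handling a possible characteristic vertex ($y_p=0$ at the transition), which requires a brief separate argument using the eigenvalue equation at the zero entry; strict monotonicity then drops out of the recurrence.
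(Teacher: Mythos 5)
The paper does not actually prove Lemma~\ref{lem:sign}: it is imported verbatim from Brand--Guiduli--Imrich \cite{Imrich}, so there is no in-paper argument to measure yours against. Judged on its own, your plan is correct and is the natural equitable-partition/Jacobi-matrix route. For (i), the decomposition $\mathbb{R}^n=V_1\oplus V_2$ is indeed $L$-invariant ($\Pi$ is equitable and $G_n$ is regular, so $A$- and $L$-invariance coincide), and the quantitative facts check out: $V_2$ splits into per-block invariant pieces because every inter-block adjacency passes through a singleton cell, on which any vector of $V_2$ vanishes; the resulting blocks act as the scalar $4$ (adjacent twin pair), the scalar $3$ (non-adjacent twin pair in the five-vertex end block), and, for the right end block when $n\equiv0\pmod4$, the matrix $\bigl(\begin{smallmatrix}4&-1\\-1&3\end{smallmatrix}\bigr)\oplus(4)$, whose smallest eigenvalue $(7-\sqrt5)/2>2$. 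Since $G_n$ has a cut vertex, Fiedler's inequality gives $\mu(G_n)\le\kappa(G_n)=1$, so $\mu(G_n)\notin\mathrm{spec}(L|_{V_2})$ and the whole Fiedler eigenspace lies in $V_1$. For (ii), your identification of $\mu(G_n)$ as the second smallest eigenvalue of the tridiagonal quotient $B^L$ is legitimate (its spectrum is a sub-multiset of that of $L$ and contains $0$ and, by (i), $\mu(G_n)$), the cells are indeed adjacent only consecutively, and the Gantmacher--Krein sign-change count applies to the symmetrized Jacobi matrix. The recurrence $R_id_i=\mu y_i+L_id_{i-1}$ then yields strict monotonicity, and the ``characteristic vertex'' worry you flag is harmless: $y_p=0$ at an interior index still gives $d_p=L_pd_{p-1}/R_p>0$, while $y_1=0$ (or $y_k=0$) propagates through the recurrence to force $\y=\mathbf{0}$, a contradiction. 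The only reservation is that this is a plan rather than a finished proof --- the per-block computation of $L|_{V_2}$ and the check that $B^L$ is tridiagonal are asserted rather than written out --- but those steps are routine and I see none that would fail.
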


Recall that a {\em block} of a graph is a maximal connected subgraph
with no cut vertex|a subgraph with as many edges as possible and no cut vertex. So
a block is either $K_2$ (a trivial block) or is a graph which contains a cycle. If a graph $G$ has no cut vertex, then $G$ itself is also called a block.
The blocks of a connected graph fit together in a tree-like
structure, called the {\em block tree} of $G$. The block tree of the graphs $G_n$ are paths which justifies the description `path-like structure.'

We now present the main result of this section.

\begin{theorem}\label{thm:cubic}
The  minimum algebraic connectivity of  cubic graphs on $n$ vertices is $(1+o(1))\frac{2\pi^2}{n^2}$.
 \end{theorem}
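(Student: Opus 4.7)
The plan is to compute $\mu(G_n)$ asymptotically, which by Theorem~\ref{thm:cubicBGI} settles the theorem. I will establish matching upper and lower bounds of $(1+o(1))\tfrac{2\pi^2}{n^2}$.

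By Lemma~\ref{lem:sign}(i), any Fiedler vector of $G_n$ is constant on each cell of $\Pi$, so writing $\x=Sy$ with $S$ the $n\times k$ cell-indicator matrix turns the characterization~(\ref{eq}) into
\[
\mu(G_n)=\min_{\substack{y\neq 0\\ \sum_i |C_i|y_i=0}}\ \frac{\sum_{i=1}^{k-1} e_{i,i+1}(y_{i+1}-y_i)^2}{\sum_{i=1}^k |C_i|\,y_i^2}.
\]
From Figure~\ref{fig:Gn} one reads off $k=(1+o(1))\tfrac{3n}{4}$ cells; in the middle of the chain both the cell sizes $|C_i|$ (cycling $1,2,1$) and the edge-counts $e_{i,i+1}$ (cycling $2,2,1$: two edges into the size-$2$ pair, two out, then the bridge) are periodic of period~$3$, and the two end blocks contribute only $O(1)$ boundary corrections.

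For the upper bound, the naive test vector $y_i=\cos(\pi(i-1)/(k-1))$, which is orthogonal to $D_\Pi\mathbf{1}$ by the left--right symmetry of $G_n$, already gives $\mu(G_n)\le (1+o(1))\tfrac{5\pi^2}{4k^2}=(1+o(1))\tfrac{20\pi^2}{9n^2}$ via the arithmetic means $p_A=\tfrac{5}{3}$ and $w_A=\tfrac{4}{3}$ of the two periodic sequences, but this is weaker than the target. To reach the sharp value, I would use a two-scale ansatz $y_i = f(i/k) + \tfrac{1}{k}\,v_{i\bmod 3}\, f'(i/k)$, with $f(\sigma)=\cos(\pi\sigma)$ a slow envelope and $v_0,v_1,v_2$ a period-$3$ corrector. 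Lagrange multipliers show that the optimal choice forces $\Delta v_j:=v_{(j+1)\bmod 3}-v_{j\bmod 3}$ to be proportional to $1/e_j$, which replaces the arithmetic mean of the $e_j$ in the numerator by the harmonic mean $p_H=3/(\tfrac12+\tfrac12+1)=\tfrac32$, while the denominator still averages to $w_A=\tfrac43$. Hence
\[
\mu(G_n)\le (1+o(1))\,\frac{\pi^2 p_H}{w_A\,k^2}= (1+o(1))\,\frac{9\pi^2}{8k^2}= (1+o(1))\,\frac{2\pi^2}{n^2}.
\]

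For the lower bound, I would recognize the generalized eigenvalue problem $S^\top L(G_n)S\,y=\mu D_\Pi y$ (with $D_\Pi=\mathrm{diag}(|C_i|)$) as a discretization of the periodic Sturm--Liouville problem $-(p(t)f'(t))'=\lambda w(t)f(t)$ on $[0,k]$ with Neumann boundary conditions and piecewise-constant period-$3$ coefficients $p,w$; classical one-dimensional homogenization then produces the matching bound $(1+o(1))\pi^2 p_H/(w_A k^2)$ for the smallest positive eigenvalue. A self-contained alternative is a transfer-matrix / Bloch-wave analysis: the three-term recurrence is advanced by a period-$3$ transfer matrix $T(\mu)$, and the two boundary blocks pin the Bloch wavenumber at $\alpha=\pi/k+O(1/k^2)$; expanding the dispersion relation around $\mu=0$ then recovers the coefficient $9/8$ directly. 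The main obstacle, as I foresee it, is controlling the end-block boundary layers (which break periodicity) rigorously: a clean route is to compare with an auxiliary exactly-periodic graph $H_n$ (e.g., a cycle built out of the middle blocks), whose spectrum is accessible via discrete Fourier analysis, and to bound $|\mu(G_n)-\mu(H_n)|=O(1/n^3)$ by a Rayleigh-quotient perturbation argument that dominates the end-block contribution using Cauchy--Schwarz.
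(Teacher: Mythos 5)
Your reduction to the weighted-path quotient problem via Lemma~\ref{lem:sign}(i) is legitimate, and your upper bound is in substance the same as the paper's: the ``harmonic-mean corrector'' that makes the in-block increments proportional to $1/e_j$ is exactly what the paper's explicit test vector does (it puts the value $\frac{x_{2i-1}+x_{2i}}{2}$ on the middle pair of each block, so the two in-block increments are each half of the bridge increment), and your constant $\pi^2 p_H/(w_A k^2)=9\pi^2/(8k^2)=2\pi^2/n^2$ agrees with the paper's computation. That half of the argument could be carried out rigorously along your lines.

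The lower bound, however, has a genuine gap. The homogenization/Bloch-wave statement you invoke is only a heuristic for this discrete problem with non-periodic end blocks, and the concrete rigorization you propose --- comparing with a cycle $H_n$ built from the middle blocks and claiming $|\mu(G_n)-\mu(H_n)|=O(1/n^3)$ --- cannot work as stated: a cycle of comparable length has smallest nonzero Laplacian eigenvalue about $4\pi^2 p_H/(w_A k^2)$, i.e.\ roughly four times $\mu(G_n)$, so the difference is $\Theta(1/n^2)$, the same order as the quantity being computed. Even with the standard length-doubling trick, the required perturbation estimate against the boundary layers is precisely the hard part, and it is asserted rather than proved. The paper avoids all of this with an elementary argument you do not use: it takes a skew-symmetric Fiedler vector $\y$, restricts it to the $2m+6$ cut vertices plus four end vertices to obtain a vector $\z$, shows $\sum_{ij\in E(G_n)}(y_i-y_j)^2\ge\sum_{r}(z_r-z_{r+1})^2$ by minimizing over the value on the middle pair of each block, and shows $\sum_i y_i^2\le 2\sum_i z_i^2$ using the monotonicity of the Fiedler vector from Lemma~\ref{lem:sign}(ii); this gives $\mu(G_n)\ge\tfrac12\mu(P_{2m+6})=(1+o(1))\tfrac{2\pi^2}{n^2}$ directly from Fiedler's formula for the path. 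You would also need to treat $n\equiv0\pmod4$ separately (the end blocks differ), which the paper does by an interpolation between $G_{n-2}$ and an auxiliary graph $H_{n+2}$.
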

\begin{proof}{In view of Theorem~\ref{thm:cubicBGI}, it suffices to show that $\mu(G_n)=(1+o(1))\frac{2\pi^2}{n^2}$.
To prove this, we consider two cases based on the value of $n$ mod $4$. 

\noindent{\bf Case 1.} $n\equiv2\pmod4$

In this case $G_n$ is the upper graph of Figure~\ref{fig:Gn}.
 Let $m+2$ be the number of non-trivial blocks   of $G_n$. So we have  $n=4m+10$.

 We first prove that $(1+o(1))\frac{2\pi^2}{n^2}$ is an upper bound for $\mu(G_n)$.

  We define  the vector $\x=(x_1,\ldots,x_{2m})^\top$ with
 $$x_i=\cos\left(\frac{(2i-1)\pi}{4m}\right),~~i=1,\ldots,2m.$$
 Note that  $\x$ is  skew symmetric vector,  i.e. $x_{2m-i+1}=-x_i$, for $i=1,\ldots,m$, and so
 $\x\perp\bf1$. We extend $\x$ to define the vector $\x'$ on $G_n$ as shown in Figure~\ref{fig:cubic1}.

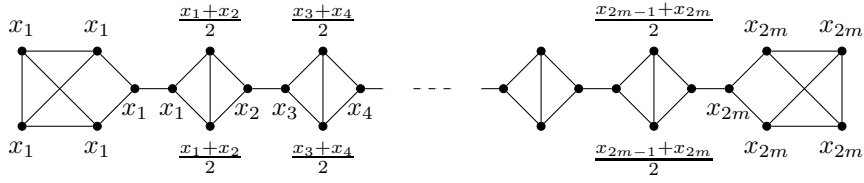
\begin{figure}[h!]
\centering
\begin{tikzpicture}
	\vertex[fill] (1) at (0,-.5) [label=below:\footnotesize{$x_1$}]{};
	\vertex[fill] (2) at (0,.5) [label=above:\footnotesize{$x_1$}] {};
	\vertex[fill] (3) at (1,-.5) [label=below:\footnotesize{$x_1$}] {};
	\vertex[fill] (4) at (1,.5) [label=above:\footnotesize{$x_1$}] {};
    \vertex[fill] (5) at (1.5,0) [label=below:\footnotesize{$x_1$}] {};
    \vertex[fill] (6) at (2,0) [label=below:\footnotesize{$x_1$}] {};
    \vertex[fill] (7) at (2.5,.5) [label=above:\footnotesize{$\frac{x_1+x_2}{2}$}] {};
    \vertex[fill] (8) at (2.5,-.5) [label=below:\footnotesize{$\frac{x_1+x_2}{2}$}] {};
    \vertex[fill] (9) at (3,0) [label=below:\footnotesize{$x_2$}] {};
    \vertex[fill] (10) at (3.5,0) [label=below:\footnotesize{$x_3$}] {};
    \vertex[fill] (11) at (4,.5) [label=above:\footnotesize{$\frac{x_3+x_4}{2}$}] {};
    \vertex[fill] (12) at (4,-.5) [label=below:\footnotesize{$\frac{x_3+x_4}{2}$}] {};
    \vertex[fill] (13) at (4.5,0) [label=below:\footnotesize{$x_4$}] {};
     \vertex[fill] (22) at (6.4,0) [] {};
    \vertex[fill] (23) at (6.9,.5) [] {};
    \vertex[fill] (24) at (6.9,-.5) [] {};
    \vertex[fill] (25) at (7.4,0) [] {};
    \vertex[fill] (26) at (7.9,0) [] {};
    \vertex[fill] (27) at (8.4,.5) [label=above:\footnotesize{$\frac{x_{2m-1}+x_{2m}}{2}$}] {};
    \vertex[fill] (28) at (8.4,-.5) [label=below:\footnotesize{$\frac{x_{2m-1}+x_{2m}}{2}$}] {};
    \vertex[fill] (29) at (8.9,0) [] {};
    \vertex[fill] (34) at (10.9,-.5) [label=below:\footnotesize{$x_{2m}$}] {};
	\vertex[fill] (33) at (10.9,.5) [label=above:\footnotesize{$x_{2m}$}] {};
	\vertex[fill] (32) at (9.9,-.5) [label=below:\footnotesize{$x_{2m}$}] {};
	\vertex[fill] (31) at (9.9,.5) [label=above:\footnotesize{$x_{2m}$}] {};
    \vertex[fill] (30) at (9.4,0) [label=below:\footnotesize{$x_{2m}$}] {};
    \tikzstyle{vertex}=[circle, draw, inner sep=0pt, minimum size=0pt]
    \vertex[fill] (14) at (4.8,0)[]{} ;
    \vertex[fill] (15) at (5.2,0)[]{} ;
    \vertex[fill] (16) at (5.3,0)[]{} ;
    \vertex[fill] (17) at (5.4,0)[]{} ;
    \vertex[fill] (18) at (5.5,0)[]{} ;
    \vertex[fill] (19) at (5.6,0)[]{} ;
    \vertex[fill] (20) at (5.7,0)[]{} ;
    \vertex[fill] (21) at (6.1,0)[]{} ;
	\path
		(1) edge (2)
		(1) edge (3)
	    (1) edge (4)
		(2) edge (3)
        (2) edge (4)
		(3) edge (5)
	    (4) edge (5)
	    (5) edge (6)
	    (6) edge (7)
	    (6) edge (8)
	    (7) edge (8)
	    (7) edge (9)
	    (8) edge (9)
	    (9) edge (10)
	    (10) edge (11)
	    (10) edge (12)
	    (11) edge (12)
	    (11) edge (13)
	    (12) edge (13)
	    (13) edge (14)
	   (15) edge (16)
	   (17) edge (18)
	   (19) edge (20)
	    (21) edge (22)
	    (22) edge (23)
	    (22) edge (24)
	    (23) edge (24)
	    (23) edge (25)
	    (24) edge (25)
	    (25) edge (26)
	    (26) edge (27)
	    (26) edge (28)
	    (27) edge (28)
	    (27) edge (29)
	    (28) edge (29)
	     (29) edge (30)
	      (30) edge (31)
	       (30) edge (32)
	        (31) edge (33)
	        (31) edge (34)
	       (32) edge (33)
	        (32) edge (34)
	         (33) edge (34);
\end{tikzpicture}
 \caption{The components of $\x'$ on $G_n$, $n\equiv2\pmod4$}\label{fig:cubic1}
\end{figure}
The vector $\x'$ (like $\x$) is a skew symmetric. It follows that $\x'\perp\bf1$. Therefore, by \eqref{eq} we have
\begin{align}
\mu(G_n)&\le\frac{\x'^\top L(G_n)\x'}{\x'^\top\x'}\nonumber\\
&\leq \frac{\sum_{i=1}^{2m-1}(x_i-x_{i+1})^2}{\sum_{i=1}^{2m}x_i^2+2\sum_{i=1}^{m}\frac{1}{4}(x_{2i-1}+x_{2i})^2+10x_1^2}\nonumber \\
&\leq \frac{4\sin^2(\frac{\pi}{4m})\sum_{i=1}^{2m-1}\sin^2(\frac{\pi  i}{2m})}{\sum_{i=1}^{2m}\cos^2(\frac{(2i-1)\pi}{4m})
+2\cos^2(\frac{\pi}{4m})\sum_{i=1}^m\cos^2(\frac{(2i-1)\pi}{2m})}\label{eq:cos-sin}\\
&=\frac{4m \sin^2(\frac{\pi}{4m})}{m+m\cos^2(\frac{\pi}{4m})}\label{eq:sin^2} \\
&=(1+o(1))\frac{2\pi^2}{n^2}.\nonumber
\end{align}
Note that \eqref{eq:cos-sin} is obtained using the identities $\cos\alpha-\cos\beta=-2 \sin \frac{\alpha+\beta}{2} \sin \frac{\alpha-\beta}{2}$ and $\cos\alpha+\cos\beta=2 \cos \frac{\alpha+\beta}{2} \cos \frac{\alpha-\beta}{2}$.  For \eqref{eq:sin^2} we use the identities
$$\sum_{i=1}^{2m-1}\sin^2\left(\frac{\pi i}{2m}\right)=\sum_{i=1}^{2m}\cos^2\left(\frac{(2i-1)\pi}{4m}\right)=m,~~~
\sum_{i=1}^m\cos^2\left(\frac{(2i-1)\pi}{2m}\right)=\frac m2$$
which  are  a consequence of the fact that
$\sin^2(\alpha)+\sin^2(\frac\pi2-\alpha)=\cos^2(\alpha)+\cos^2(\frac\pi2-\alpha)=1$.

We now prove that $(1+o(1))\frac{2\pi^2}{n^2}$ is a lower bound for $\mu(G_n)$.

Let $\y=(y_1, y_2, \ldots, y_n)^\top$ be a Fiedler vector of $G_n$.
 Let $B_1,\ldots,B_{m+2}$ be the non-trivial blocks of $G_n$, and
   $E_1$ be the set of edges of $B_1,\ldots,B_{m+2}$ and $E_2$ be the set of all bridges of $G_n$. Then we have
 \begin{align} \label{mu}
\mu(G_n) &=\frac{\y^\top L(G_n)\y}{\y^\top\y}\nonumber\\
&=\frac{\sum_{ij\in E(G_n)}(y_i-y_j)^2}{\sum_{i=1}^ny_i^2}\nonumber\\
&= \frac{\sum_{ij\in E_1}(y_i-y_j)^2+\sum_{ij\in E_2}(y_i-y_j)^2}{\sum_{i=1}^ny_i^2}.
 \end{align}
 The graph $G_n$ has $2m+2$ cut vertices. Consider the components of $\y$ on the cut vertices of $G_n$ together with the four components $y_1, y_3, y_{n-2},y_n$;
we define  $\z$ as the vector consisting of these $2m+6$ components, as depicted in Figure~\ref{fig:cubic2a}.

 \begin{figure}[h!]
 \centering
 \begin{tikzpicture}
	\vertex[fill] (1) at (0,-.5) [] {};
	\vertex[fill] (2) at (0,.5) [label=above:\footnotesize{$z_1$}] {};
	\vertex[fill] (3) at (1,-.5) [] {};
	\vertex[fill] (4) at (1,.5) [label=above:\footnotesize{$z_2$}] {};
    \vertex[fill] (5) at (1.5,0) [label=above:\footnotesize{$z_3$}] {};
    \vertex[fill] (6) at (2,0) [label=above:\footnotesize{$z_4$}] {};
    \vertex[fill] (7) at (2.5,.5) [] {};
    \vertex[fill] (8) at (2.5,-.5) [] {};
    \vertex[fill] (9) at (3,0) [label=above:\footnotesize{$z_5$}] {};
    \vertex[fill] (10) at (3.5,0) [label=above:\footnotesize{$z_6$}] {};
    \vertex[fill] (11) at (4,.5) [] {};
    \vertex[fill] (12) at (4,-.5) [] {};
    \vertex[fill] (13) at (4.5,0) [label=above:\footnotesize{$z_7$}] {};
     \vertex[fill] (22) at (6.4,0) [] {};
    \vertex[fill] (23) at (6.9,.5) [] {};
    \vertex[fill] (24) at (6.9,-.5) [] {};
    \vertex[fill] (25) at (7.4,0) [] {};
    \vertex[fill] (26) at (7.9,0) [] {};
    \vertex[fill] (27) at (8.4,.5) [] {};
    \vertex[fill] (28) at (8.4,-.5) [] {};
    \vertex[fill] (29) at (8.9,0) [] {};
    \vertex[fill] (34) at (10.9,-.5) [] {};
	\vertex[fill] (33) at (10.9,.5) [label=above:\footnotesize{$z_{2m+6}$}] {};
	\vertex[fill] (32) at (9.9,-.5) [] {};
	\vertex[fill] (31) at (9.9,.5) [label=above:\footnotesize{$z_{2m+5}$}] {};
    \vertex[fill] (30) at (9.4,0) [] {};
    \tikzstyle{vertex}=[circle, draw, inner sep=0pt, minimum size=0pt]
    \vertex[fill] (14) at (4.8,0) [] {};
    \vertex[fill] (15) at (5.2,0) [] {};
    \vertex[fill] (16) at (5.3,0) [] {};
    \vertex[fill] (17) at (5.4,0) [] {};
    \vertex[fill] (18) at (5.5,0) [] {};
    \vertex[fill] (19) at (5.6,0) [] {};
    \vertex[fill] (20) at (5.7,0) [] {};
    \vertex[fill] (21) at (6.1,0) [] {};
	\path
		(1) edge (2)
		(1) edge (3)
	    (1) edge (4)
		(2) edge (3)
        (2) edge (4)
		(3) edge (5)
	    (4) edge (5)
	    (5) edge (6)
	    (6) edge (7)
	    (6) edge (8)
	    (7) edge (8)
	    (7) edge (9)
	    (8) edge (9)
	    (9) edge (10)
	    (10) edge (11)
	    (10) edge (12)
	    (11) edge (12)
	    (11) edge (13)
	    (12) edge (13)
	    (13) edge (14)
	   (15) edge (16)
	   (17) edge (18)
	   (19) edge (20)
	    (21) edge (22)
	    (22) edge (23)
	    (22) edge (24)
	    (23) edge (24)
	    (23) edge (25)
	    (24) edge (25)
	    (25) edge (26)
	    (26) edge (27)
	    (26) edge (28)
	    (27) edge (28)
	    (27) edge (29)
	    (28) edge (29)
	     (29) edge (30)
	      (30) edge (31)
	       (30) edge (32)
	        (31) edge (33)
	        (31) edge (34)
	       (32) edge (33)
	        (32) edge (34)
	         (33) edge (34);
\end{tikzpicture}
\caption{The vector $\z$ defined on cut vertices and end blocks of $G_n$}\label{fig:cubic2a}
\end{figure}
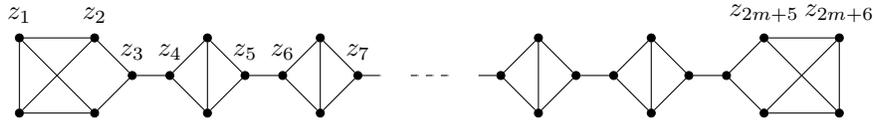

Note  that $\y$ is skew symmetric.
To verify this, observe that by the symmetry of $G_n$, $\y'=(y_n,y_{n-1},\ldots,y_1)$ is also an eigenvector for $\mu(G_n)$.
It follows that $\y-\y'$ itself is a skew symmetric eigenvector for $\mu(G_n)$ (note that from Lemma~\ref{lem:sign}, it is seen that $\y-\y'\ne\bf0$), so that we may replace $\y-\y'$ for $\y$. Now, from Lemma~\ref{lem:sign}, it follows that $\z=(z_1, z_2, \ldots, z_{2m+6})\ne\bf0$.
As $\y$ is skew symmetric, it follows that $\z$ is also skew symmetric and thus $\z\perp\bf1$.
 Let $B_k$ be one of the middle blocks of $G_n$, i.e. $2\le k\le m+1$.
 The components of $\y$ on the left vertex and the right vertex of $B_k$  are $z_{2k}$ and $z_{2k+1}$, respectively.
 Let $t$ be the component of $\y$ on the two middle vertices of $B_k$ (which are equal by Lemma~\ref{lem:sign}) as shown in Figure~\ref{fig:cubic2b}.
 \begin{figure}[h!]
 \centering
 \begin{tikzpicture}
    \vertex[fill] (6) at (2,0) [label=left:\footnotesize{$z_{2k}$}] {};
    \vertex[fill] (7) at (2.5,.5) [label=above:\footnotesize{$t$}] {};
    \vertex[fill] (8) at (2.5,-.5) [label=below:\footnotesize{$t$}] {};
    \vertex[fill] (9) at (3,0) [label=right:\footnotesize{$z_{2k+1}$}] {};
	\path
	    (6) edge (7)
	    (6) edge (8)
	    (7) edge (8)
	    (7) edge (9)
	    (8) edge (9);
\end{tikzpicture}
\caption{The components of $\y$ on a middle block $B_k$}\label{fig:cubic2b}
\end{figure}
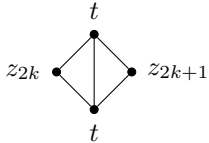

 Then
 $$\sum_{ij\in E(B_k)}(y_i-y_j)^2=2(z_{2k}-t)^2+2(t-z_{2k+1})^2.$$
 The right hand side, considered as a function of $t$,  is minimized at $t=\frac12(z_{2k}+z_{2k+1})$. This implies that
$$\sum_{ij\in E(B_k)}(y_i-y_j)^2\ge(z_{2k}-z_{2k+1})^2.$$
  It follows that
\begin{align*}
\sum_{ij\in E_1}(y_i-y_j)^2 &= \sum_{ij\in E(B_1)}(y_i-y_j)^2+\sum_{k=2}^{m+1}\sum_{ij\in E(B_k)}(y_i-y_j)^2 +\sum_{ij\in E(B_{m+2})}(y_i-y_j)^2 \\
&\geq 4(z_1-z_2)^2+2(z_2-z_3)^2+\sum_{k=2}^{m+1} (z_{2k}-z_{2k+1})^2\\&~~~ +2(z_{2m+4}-z_{2m+5})^2+4(z_{2m+5}-z_{2m+6})^2  \\
&\geq (z_1-z_2)^2+\sum_{k=1}^{m+2} (z_{2k}-z_{2k+1})^2+(z_{2m+5}-z_{2m+6})^2,
\end{align*}
which in turn implies that
\begin{equation}\label{u}
\sum_{ij\in E_1}(y_i-y_j)^2+\sum_{ij\in E_2}(y_i-y_j)^2\geq \sum_{r=1}^{2m+5}(z_r-z_{r+1})^2.
\end{equation}
We also have
\begin{equation} \label{d}
\sum_{i=1}^ny_i^2 \leq 2\sum_{i=1}^{2m+6}z_i^2,
\end{equation}
which holds because $y_1^2+y_2^2=2z_1^2, y_3^2+y_4^2=2z_2^2, y_5^2+y_7^2\leq 2z_3^2,  y_6^2+y_8^2\leq 2z_4^2, \ldots, y_{n-4}^2+y_{n-7}^2\leq 2z_{2m+4}^2, \ldots$ (cf. Lemma~\ref{lem:sign}).
Now, from \eqref{mu}, \eqref{u} and \eqref{d} we  infer that
\begin{equation} \label{eq:P_{2m+6}}
\mu(G_n)\geq \frac{\sum_{i=1}^{2m+5}(z_i-z_{i+1})^2}{2\sum_{i=1}^{2m+6}z_i^2}.
\end{equation}
Note that the right hand side of \eqref{eq:P_{2m+6}} is the Rayleigh quotient of $\z$ for the path $P_{2m+6}$.
Thus, by the fact that  $\mu(P_h)=2(1-\cos\frac{\pi}{h})$ (see \cite{fiedler1973algebraic}),
it follows that
\[\frac{\sum_{i=1}^{2m+5}(z_i-z_{i+1})^2}{\sum_{i=1}^{2m+6}z_i^2}\ge\mu(P_{2m+6})=(1+o(1))\frac{\pi^2}{4m^2}.\]
Therefore,
\[\mu(G_n)\geq (1+o(1))\frac{2\pi^2}{n^2}.\]

\noindent{\bf Case 2.} $n\equiv0\pmod4$

 In this case, $G_n$ is the bottom graph of Figure~\ref{fig:Gn}. We define the graph $H_{n+2}$ as shown in Figure~\ref{fig:cubic3}.
 The symmetries of $H_{n+2}$ are similar to  those of  the graph $G_{n-2}$. So the arguments of the previous case also work for $H_{n+2}$, in particular $H_{n+2}$ has a skew symmetric Fiedler vector. Therefore, we have $\mu(H_{n+2})= (1+o(1))\frac{2\pi^2}{(n+2)^2}$.
 Let  $\x=(x_1,  \ldots, x_n)^\top$ be the Fiedler vector of  $G_n$ with
 $\|\x\|=1$.
 We define the vector $\y$ of length $n+2$ by
$$y_i= \begin{cases}
x_i-\delta  & i=1, 2, 3, 4, \\
 x_5-\delta & i=5, 6,  \\
 x_{i-2}-\delta& i=7,\ldots,n+2,
\end{cases}$$
  where $\delta=\frac{2x_5}{n+2}$.  It is seen that $\y$ is orthogonal to $\bf1$.
  We  label the vertices of $H_{n+2}$ by the components of  $\y$   as shown in Figure~\ref{fig:cubic3}.
 We observe that
  $\sum_{ij\in E(G_n)} (x_i-x_j)^2=\sum_{ij\in E(H_{n+2})} (y_i-y_j)^2$.
 On the other hand,
 \begin{align*}
 \|\y\|^2&=\sum_{i=1}^{n+2} y_i^2 \\
 &=\sum_{i=1}^n (x_i-\delta)^2 +2(x_5-\delta)^2 \\
 &= \sum_{i=1}^n x_i^2-2\delta \sum_{i=1}^n x_i+n\delta^2+2(x_5-\delta)^2 \\
 &= 1+2x_5^2\left(1-\frac{2}{n+2}\right).
 \end{align*}
 So $\|{\bf{y}}\|>1$, which means that the Rayleigh quotient for $\y$ on $H_{n+2}$  is smaller than $\mu(G_n)$.
 It follows that  $(1+o(1))\frac{2\pi^2}{(n+2)^2}=\mu(H_{n+2})\leq \mu(G_n)$.
  By a similar argument,  we see that $\mu(G_n)\leq \mu(G_{n-2})=(1+o(1))\frac{2\pi^2}{(n-2)^2}$. Therefore,   $\mu(G_n)= (1+o(1))\frac{2\pi^2}{n^2}$.
  }\end{proof}
   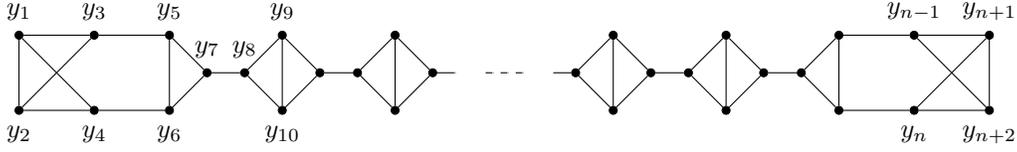
\begin{figure}
\centering
\begin{tikzpicture}
	\vertex[fill] (2) at (-1,-.5) [label=below:\footnotesize{$y_2$}] {};
	\vertex[fill] (1) at (-1,.5) [label=above:\footnotesize{$y_1$}] {};
	\vertex[fill] (4) at (0,-.5) [label=below:\footnotesize{$y_4$}] {};
	\vertex[fill] (3) at (0,.5) [label=above:\footnotesize{$y_3$}] {};
    \vertex[fill] (0) at (1,-.5) [label=below:\footnotesize{$y_6$}] {};
	\vertex[fill] (00) at (1,.5) [label=above:\footnotesize{$y_5$}] {};
    \vertex[fill] (5) at (1.5,0) [label=above:\footnotesize{$y_7$}] {};
    \vertex[fill] (6) at (2,0) [label=above:\footnotesize{$y_8$}] {};
    \vertex[fill] (7) at (2.5,.5) [label=above:\footnotesize{$y_9$}] {};
    \vertex[fill] (8) at (2.5,-.5) [label=below:\footnotesize{$y_{10}$}] {};
    \vertex[fill] (9) at (3,0) [] {};
    \vertex[fill] (10) at (3.5,0) [] {};
    \vertex[fill] (11) at (4,.5) [] {};
    \vertex[fill] (12) at (4,-.5) [] {};
    \vertex[fill] (13) at (4.5,0) [] {};
     \vertex[fill] (22) at (6.4,0) [] {};
    \vertex[fill] (23) at (6.9,.5) [] {};
    \vertex[fill] (24) at (6.9,-.5) [] {};
    \vertex[fill] (25) at (7.4,0) [] {};
    \vertex[fill] (26) at (7.9,0) [] {};
    \vertex[fill] (27) at (8.4,.5) [] {};
    \vertex[fill] (28) at (8.4,-.5) [] {};
    \vertex[fill] (29) at (8.9,0) [] {};
    \vertex[fill] (34) at (10.9,-.5) [label=below:\footnotesize{$y_n$}] {};
	\vertex[fill] (33) at (10.9,.5) [label=above:\footnotesize{$y_{n-1}$}] {};
	\vertex[fill] (32) at (9.9,-.5) [] {};
	\vertex[fill] (31) at (9.9,.5) [] {};
    \vertex[fill] (30) at (9.4,0) [] {};
   \vertex[fill] (35) at (11.9,-.5) [label=below:\footnotesize{$y_{n+2}$}] {};
	\vertex[fill] (36) at (11.9,.5) [label=above:\footnotesize{$y_{n+1}$}] {};
    \tikzstyle{vertex}=[circle, draw, inner sep=0pt, minimum size=0pt]
    \vertex[fill] (14) at (4.8,0) [] {};
    \vertex[fill] (15) at (5.2,0) [] {};
    \vertex[fill] (16) at (5.3,0) [] {};
    \vertex[fill] (17) at (5.4,0) [] {};
    \vertex[fill] (18) at (5.5,0) [] {};
    \vertex[fill] (19) at (5.6,0) [] {};
    \vertex[fill] (20) at (5.7,0) [] {};
    \vertex[fill] (21) at (6.1,0) [] {};
	\path
		(1) edge (2)
		(1) edge (3)
	    (1) edge (4)
	     (2) edge (3)
	    (2) edge (4)
		(3) edge (00)
        (4) edge (0)
        (0) edge (00)
		(0) edge (5)
	    (00) edge (5)
	    (5) edge (6)
	    (6) edge (7)
	    (6) edge (8)
	    (7) edge (8)
	    (7) edge (9)
	    (8) edge (9)
	    (9) edge (10)
	    (10) edge (11)
	    (10) edge (12)
	    (11) edge (12)
	    (11) edge (13)
	    (12) edge (13)
	    (13) edge (14)
	   (15) edge (16)
	   (17) edge (18)
	   (19) edge (20)
	    (21) edge (22)
	    (22) edge (23)
	    (22) edge (24)
	    (23) edge (24)
	    (23) edge (25)
	    (24) edge (25)
	    (25) edge (26)
	    (26) edge (27)
	    (26) edge (28)
	    (27) edge (28)
	    (27) edge (29)
	    (28) edge (29)
	     (29) edge (30)
	      (30) edge (31)
	       (30) edge (32)
	        (31) edge (32)
	        (31) edge (33)
	       (32) edge (34)
	        (33) edge (35)
	         (34) edge (36)
	        (33) edge (36)
	         (34) edge (35)
	         (35) edge (36);
\end{tikzpicture}
\caption{The graph $H_{n+2}$ and the components of $\y$}\label{fig:cubic3}
\end{figure}

\section{Structure of quartic graphs with minimum spectral gap}

Motivated by the Aldous--Fill Conjecture and also as an analogue  to Babai's conjecture,
 we consider the problem of determining the structure of connected quartic graphs with minimum spectral gap.
We prove that such graphs  have a path-like structure (see Figure~\ref{fig:path-like}) and specify their blocks. Finally, we pose a conjecture which precisely describes the connected quartic graphs with minimum spectral gap.

We remark that in a quartic graph, any cut vertex belongs to exactly two blocks and further has degree $2$ in each of them.
 Therefore, in the quartic graphs having a path-like structure, the middle and end blocks have exactly two and one vertices of degree $2$, respectively.

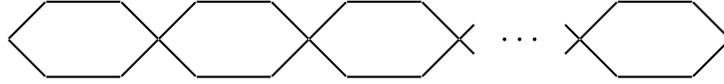
\begin{figure}[h!]
	\centering
	\begin{tikzpicture}
	\tikzstyle{vertex}=[draw, inner sep=0pt, minimum size=0pt]
	\vertex[fill] (1) at (0,0) [label=left:\tiny{}] {};
	\vertex[fill] (2) at (.5,.5) [] {};
	\vertex[fill] (3) at (.5,-.5) [] {};
	\vertex[fill] (4) at (1.5,.5) [] {};
	\vertex[fill] (5) at (1.5,-.5) [] {};
	\vertex[fill] (6) at (2,0) [] {};
	\vertex[fill] (7) at (2.5,.5) [] {};
	\vertex[fill] (8) at (2.5,-.5) [] {};
	\vertex[fill] (9) at (3.5,.5) [] {};
	\vertex[fill] (10) at (3.5,-.5) [] {};
	\vertex[fill] (11) at (4,0) [] {};
	\vertex[fill] (12) at (4.5,.5) [] {};
	\vertex[fill] (13) at (4.5,-.5) [] {};
	\vertex[fill] (14) at (5.5,.5) [] {};
	\vertex[fill] (15) at (5.5,-.5) [] {};
	\vertex[fill] (16) at (6,0) [] {};
	\vertex[fill] (20) at (7.6,0) [] {};
	\vertex[fill] (21) at (8.1,.5) [] {};
	\vertex[fill] (22) at (8.1,-.5) [] {};
	\vertex[fill] (23) at (9.1,.5) [] {};
	\vertex[fill] (24) at (9.1,-.5) [] {};
	\vertex[fill] (25) at (9.6,0) [] {};
	\tikzstyle{vertex}=[circle, draw, inner sep=0pt, minimum size=1pt]
	\vertex[fill] (17) at (6.6,0) [] {};
	\vertex[fill] (18) at (6.8,0) [] {};
	\vertex[fill] (19) at (7,0) [] {};
	\tikzstyle{vertex}=[circle, draw, inner sep=0pt, minimum size=0pt]
	\vertex (s) at (6.2,.2) [label=right:$$] {};
	\vertex (ss) at (6.2,-.2) [label=right:$$] {};
	\vertex (sss) at (7.4,.2) [label=right:$$] {};
	\vertex (ssss) at (7.4,-.2) [label=right:$$] {};
	\path[draw,thick]
	(1) edge (2)
	(1) edge (3)
	(3) edge (5)
	(2) edge (4)
	(4) edge (6)
	(5) edge (6)
	(6) edge (7)
	(6) edge (8)
	(7) edge (9)
	(8) edge (10)
	(10) edge (11)
	(9) edge (11)
	(11) edge (12)
	(11) edge (13)
	(12) edge (14)
	(13) edge (15)
	(14) edge (16)
	(15) edge (16)
	(16) edge (ss)
	(16) edge (s)
	(sss) edge (20)
	(ssss) edge (20)
	(20) edge (21)
	(20) edge (22)
	(21) edge (23)
	(22) edge (24)
	(24) edge (25)
	(23) edge (25);
	\end{tikzpicture}
	\caption{The path-like structure}\label{fig:path-like}
	\end{figure}

One of our goals in this section is to specify the structure of the blocks of a quartic graph with minimum spectral gap.
As we shall prove, the blocks of such graphs are
of two types: `short' and `long'.
By {\em short} blocks we mean those given in Figure~\ref{fig:short}.
  \begin{figure}[h!]
\captionsetup[subfigure]{labelformat=empty}
\centering
\subfloat[$M$]{\begin{tikzpicture}[scale=.9]
       \vertex[fill] (r) at (1,0) [] {};
 		\vertex[fill] (r1) at (1.5,.5) [] {};
 		\vertex[fill] (r2) at (1.5,-.5)[] {};
 		\vertex[fill] (r3) at (2,0) [] {};
 		\vertex[fill] (r4) at (2.5,.5) [] {};
 		\vertex[fill] (r5) at (2.5,-.5) [] {};
 	\vertex[fill] (r6) at (3,0) [] {};
 		\path
 	   (r) edge (r1)
 		(r) edge (r2)
 		(r1) edge (r2)
 		(r1) edge (r3)
 		(r1) edge (r4)
 		(r2) edge (r3)
 		(r2) edge (r5)
 		(r3) edge (r4)
 		(r3) edge (r5)
 		(r5) edge (r4)
 	     (r6) edge (r5)
 		(r6) edge (r4) 	;
 		\end{tikzpicture}}
 \qquad
\subfloat[$M_1$]{\begin{tikzpicture}[scale=.9]
   \vertex[fill] (r) at (0,0) [] {};
	\vertex[fill] (r1) at (.5,.5) [] {};
	\vertex[fill] (r2) at (.5,-.5) [] {};
	\vertex[fill] (r3) at (1.5,.5) [] {};
	\vertex[fill] (r4) at (1.5,-.5) [] {};
\vertex[fill] (r5) at (2,0) [] {};
	\path
   (r) edge (r1)
    (r) edge (r2)
	(r1) edge (r2)
	(r1) edge (r3)
	(r1) edge (r4)
	(r2) edge (r3)
	(r2) edge (r4)
	(r3) edge (r4)
    (r3) edge (r5)
	(r4) edge (r5);
	\end{tikzpicture}}
\qquad
\subfloat[$M_2$]{\begin{tikzpicture}[scale=.9]
\vertex[fill] (r) at (0,0) [] {};
	\vertex[fill] (r1) at (.5,.5) [] {};
	\vertex[fill] (r2) at (.5,-.5) [] {};
	\vertex[fill] (r3) at (1.5,.5) [] {};
	\vertex[fill] (r4) at (1.5,-.5) [] {};
	\vertex[fill] (r5) at (2.5,.5) [] {};
	\vertex[fill] (r6) at (2.5,-.5) [] {};
\vertex[fill] (r7) at (3,0) [] {};
	\path
    ( r) edge (r1)
	(r) edge (r2)
	(r5) edge (r6)
	(r1) edge (r2)
	(r1) edge (r3)
	(r1) edge (r4)
	(r2) edge (r3)
	(r2) edge (r4)
	(r3) edge (r6)
	(r4) edge (r5)
	(r5) edge (r3)
	(r4) edge (r6)
    (r5) edge (r7)
	(r7) edge (r6);
	\end{tikzpicture}}
\qquad
\subfloat[$M_3$]{\begin{tikzpicture}[scale=.9]
    \vertex[fill] (r) at (1,0) [] {};
	\vertex[fill] (r1) at (1.5,.5) [] {};
	\vertex[fill] (r2) at (1.5,-.5) [] {};
	\vertex[fill] (r3) at (2,0) [] {};
	\vertex[fill] (r4) at (2.5,.5) [] {};
	\vertex[fill] (r5) at (2.5,-.5) [] {};
	\vertex[fill] (r6) at (3.5,.5) [] {};
	\vertex[fill] (r7) at (3.5,-.5) [] {};
   \vertex[fill] (r8) at (4,0) [] {};
	\path
    (r) edge (r2)
	(r1) edge (r)
	(r1) edge (r2)
	(r1) edge (r3)
	(r1) edge (r4)
	(r2) edge (r3)
	(r2) edge (r5)
	(r3) edge (r4)
	(r3) edge (r5)
	(r6) edge (r4)
	(r5) edge (r7)
	(r5) edge (r6)
	(r4) edge (r7)
	(r6) edge (r7)
    (r6) edge (r8)
	(r7) edge (r8) ;
	\end{tikzpicture}}
\\
\subfloat[$D_1$]{\begin{tikzpicture}[scale=0.9]
		\vertex[fill] (r1) at (0,.8) [] {};
		\vertex[fill] (r2) at (.8,1) [] {};
		\vertex[fill] (r3) at (0,0) [] {};
		\vertex[fill] (r4) at (.8,-.2) [] {};
		\vertex[fill] (r5) at (1.6,.8) [] {};
		\vertex[fill] (r6) at (1.6,0) [] {};
		\vertex[fill] (r7) at (2.3,.4) [] {};
		\path
		(r5) edge (r2)
		(r1) edge (r2)
		(r1) edge (r5)
		(r1) edge (r3)
		(r1) edge (r4)
		(r2) edge (r3)
		(r2) edge (r4)
		(r3) edge (r6)
		(r4) edge (r3)
		(r4) edge (r6)
		(r5) edge (r6)
		(r7) edge (r6)
		(r5) edge (r7);
		\end{tikzpicture}}
	\quad \quad
	\subfloat[$D_2$]{\begin{tikzpicture}[scale=0.9]
		\vertex[fill] (r1) at (0,0) [] {};
		\vertex[fill] (r2) at (0,.8) [] {};
		\vertex[fill] (r3) at (.8,1) [] {};
		\vertex[fill] (r4) at (.8,-.2) [] {};
		\vertex[fill] (r5) at (1.3,.4) [] {};
		\vertex[fill] (r6) at (2,.8) [] {};
		\vertex[fill] (r7) at (2,0) [] {};
		\vertex[fill] (r8) at (2.7,.4) [] {};
		\path
		(r5) edge (r2)
		(r1) edge (r2)
		(r1) edge (r5)
		(r1) edge (r3)
		(r1) edge (r4)
		(r2) edge (r3)
		(r2) edge (r4)
		(r3) edge (r6)
		(r4) edge (r3)
		(r4) edge (r7)
		(r5) edge (r7)
		(r5) edge (r6)
		(r6) edge (r7)
		(r7) edge (r8)
		(r6) edge (r8)  ;
		\end{tikzpicture}}
	\quad
\subfloat[$D_3$]{\begin{tikzpicture}[scale=.8]
	\vertex[fill] (r1) at (0,1) [] {};
	\vertex[fill] (r2) at (.8,1.2) [] {};
	\vertex[fill] (r3) at (0,0) [] {};
	\vertex[fill] (r4) at (.8,-.2) [] {};
	\vertex[fill] (r5) at (1.6,1) [] {};
	\vertex[fill] (r6) at (1.6,0) [] {};
	\vertex[fill] (r7) at (2.6,1) [] {};
	\vertex[fill] (r8) at (2.6,0) [] {};
\vertex[fill] (r9) at (3.3,.5) [] {};
	\path
	(r5) edge (r2)
	(r1) edge (r2)
	(r1) edge (r5)
	(r1) edge (r3)
	(r1) edge (r4)
	(r2) edge (r3)
	(r2) edge (r4)
	(r3) edge (r6)
	(r4) edge (r3)
	(r4) edge (r6)
	(r5) edge (r8)
	(r5) edge (r7)
	(r6) edge (r8)
	(r6) edge (r7)
	(r7) edge (r8)
    (r9) edge (r7)
	(r9) edge (r8)
	;
	\end{tikzpicture}}
\quad
\subfloat[$D_4$]{\begin{tikzpicture}[scale=.9]
	\vertex[fill] (r) at (0,0) [] {};
	\vertex[fill] (r1) at (.5,.5) [] {};
	\vertex[fill] (r2) at (.5,-.5) [] {};
	\vertex[fill] (r3) at (1.5,.5) [] {};
	\vertex[fill] (r4) at (1.5,-.5) [] {};
    \vertex[fill] (r5) at (2,0) [] {};
	\path
	(r) edge (r1)
	(r) edge (r2)
	(r) edge (r3)
	(r) edge (r4)
	(r1) edge (r2)
	(r1) edge (r3)
	(r1) edge (r4)
	(r2) edge (r3)
	(r2) edge (r4)
    (r5) edge (r3)
	(r5) edge (r4)
	;
	\end{tikzpicture}}
\caption{Short blocks of a quartic graph}\label{fig:short}
\end{figure}
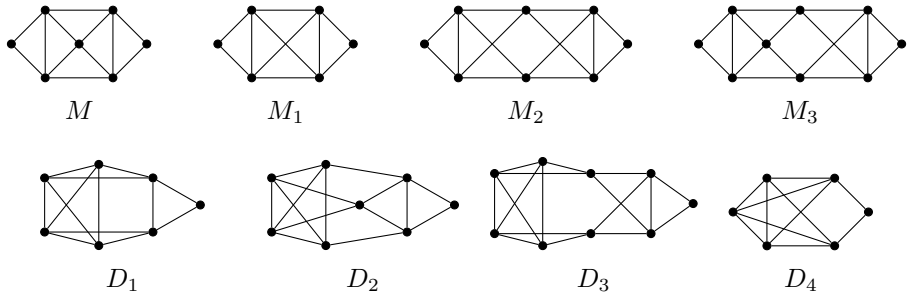

The {\em long} blocks, roughly speaking,  are constructed by putting some short blocks together with the general structure given in Figure~\ref{fig:long}.
\begin{figure}[h!]
\centering
\begin{tikzpicture}[scale=.9]
   \tikzstyle{vertex}=[draw, inner sep=0pt, minimum size=0pt]
 	\vertex[fill] (r17) at (9,0) [label=left:\footnotesize{$$}] {};
 	\vertex[fill] (r) at (1,0) [label=left:\footnotesize{$$}] {};
 	\vertex[fill] (r1) at (1.5,.5) [] {};
 	\vertex[fill] (r2) at (1.5,-.5) [] {};
 	\vertex[fill] (r3) at (2.5,.5) [] {};
 	\vertex[fill] (r4) at (2.5,-.5) [] {};
 	\vertex[fill] (r5) at (3.5,.5) [] {};
 	\vertex[fill] (r6) at (3.5,-.5) [] {};
 	\vertex[fill] (r7) at (4.5,.5) [] {};
 	\vertex[fill] (r8) at (4.5,-.5) [] {};
 	\vertex[fill] (r9) at (5.5,.5) [] {};
 	\vertex[fill] (r10) at (5.5,-.5) [] {};
 	\vertex[fill] (r11) at (6.5,.5) [] {};
 	\vertex[fill] (r12) at (6.5,-.5) [] {};
 	\vertex[fill] (r13) at (7.5,.5) [] {};
 	\vertex[fill] (r14) at (7.5,-.5) [] {};
 	\vertex[fill] (r15) at (8.5,.5) [] {};
 	\vertex[fill] (r16) at (8.5,-.5) [] {};
 	\vertex[fill] (b1) at (1.5,0) [label=right:\footnotesize{$B_1$}] {};
 	\vertex[fill] (b1) at (3.7,0) [label=right:\footnotesize{$B_2$}] {};
 	\vertex[fill] (b1) at (7.7,0) [label=right:\footnotesize{$B_s$}] {};
 	\tikzstyle{vertex}=[ draw, inner sep=0pt, minimum size=0pt]
 	\vertex[] (1) at (5.75,.5) [] {};
 	\vertex[] (2) at (5.95,.5) [] {};
 	\vertex[] (3) at (6.15,.5) [] {};
 	\vertex[] (11) at (5.85,.5) [] {};
 	\vertex[] (22) at (6.05,.5) [] {};
 	\vertex[] (33) at (6.25,.5) [] {};
 	\vertex[] (111) at (5.75,-.5) [] {};
 	\vertex[] (222) at (5.95,-.5) [] {};
 	\vertex[] (333) at (6.15,-.5) [] {};
 	\vertex[] (1111) at (5.85,-.5) [] {};
 	\vertex[] (2222) at (6.05,-.5) [] {};
 	\vertex[] (3333) at (6.25,-.5) [] {};
 	\path
 	(r) edge (r1)
 	(r) edge (r2)
 	(r2) edge (r4)
 	(r1) edge (r3)
 	(r3) edge (r4)
 	(r3) edge (r5)
 	(r4) edge (r6)
 	(r5) edge (r6)
 	(r5) edge (r6)
 	(r7) edge (r8)
 	(r5) edge (r7)
 	(r6) edge (r8)
 	(r7) edge (r9)
 	(r8) edge (r10)
 	(r11) edge (r13)
 	(r12) edge (r14)
 	(r13) edge (r14)
 	(r13) edge (r15)
 	(r14) edge (r16)
 	(r16) edge (r17)
 	(r15) edge (r17)
 	(1) edge (11)
 	(2) edge (22)
 	(3) edge (33)
 	(111) edge (1111)
 	(222) edge (2222)
 	(333) edge (3333);
 	\end{tikzpicture}
 \caption{General structure of a long block}\label{fig:long}
\end{figure}
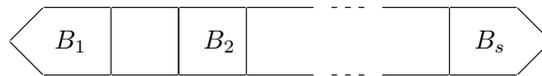

More precisely, the building `bricks' of long blocks  are the graphs $M'_1,M'_2,M'_3,D'_3,D'_4$, obtained by removing the {\em right} degree $2$ vertex of
the corresponding short blocks, and the graphs $M''_1,M''_2$,  obtained by removing both degree $2$ vertices of $M_1,M_2$.
For any of these graphs, say $B$, we denote its mirror image by $\tilde B$.
A long block is constructed from some $s\ge2$ bricks $B_1, \ldots, B_s$, where each $B_i$ is joined by two edges to $B_{i+1}$ (as shown in Figure~\ref{fig:long}).
There are three types of long blocks:
\begin{itemize}
\item[(i)] {\em long end block}: $B_1\in\{D'_3, D'_4\}$,  $B_2, \ldots, B_{s-1}\in \{ M''_1, M''_2\}$, 	and $B_s \in \{\tilde M'_1, \tilde M'_2, \tilde M'_3\}$;
\item[(ii)] {\em  long middle block}:  $B_1 \in\{M'_1, M'_2, M'_3\}$,  $B_2, \ldots, B_{s-1}\in\{M''_1, M''_2\}$, and $B_s \in\{\tilde M'_1, \tilde M'_2, \tilde M'_3\}$;
\item[(iii)] {\em  long complete block}: $B_1\in\{D'_3, D'_4\}$,  $B_2, \ldots, B_{s-1} \in\{M''_1, M''_2\}$, and $B_1\in\{\tilde D'_3, \tilde D'_4\}$
 \end{itemize}

We note that long complete blocks are quartic and long end blocks and middle blocks have exactly one or two vertices of degree $2$, respectively.

Here is the main result of this section.

 \begin{theorem} \label{thm:quartic}
Let $G$ be a graph with the minimum spectral gap in the family of connected quartic graphs on $n$ vertices.
If $G$ is a block, then either $n\le9$ and $G$ is one of the graphs of Figure~\ref{fig:G5-G9}, or $n\ge10$ and  $G$ is a long complete block.
 If  $G$ itself is not a block, then it has a path-like structure in which each left end  block is either one of $D_1,\ldots,D_4$ or a long end block, and
 each middle block is either one of $M,M_1,M_2,M_3,\tilde M_3$ or a long middle block.
  Each right end block is the mirror image of some left end block.
\end{theorem}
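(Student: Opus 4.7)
The plan is to adapt Guiduli's approach from the cubic setting. Let $G$ be a connected quartic graph on $n$ vertices achieving the minimum spectral gap, and let $\x$ be a Fiedler vector. The cases $n\le9$ are settled by a finite enumeration (aided by computer search), so I concentrate on $n\ge10$. The first ingredient is a quartic analogue of Lemma~\ref{lem:sign}: by averaging $\x$ against the natural ``top/bottom'' involutions of the candidate blocks, one may assume $\x$ is constant on each symmetric pair of vertices in a block, and by Fiedler's classical sign-pattern theorem the induced cell values form a strictly monotone sequence along the block-path that changes sign exactly once.

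Next I would prove that the block tree $T(G)$ is a path. If some cut vertex were incident to three or more branches of $T(G)$, a detaching-and-reattaching operation that moves a side branch to the far end of the longest existing branch would produce another quartic graph $G'$ with $\mu(G')<\mu(G)$, via a direct Rayleigh quotient comparison on a stretched test vector extending $\x$. Combined with the constraint that a cut vertex of a quartic graph has degree $2$ in each of its two incident blocks, this forces each end block to contain exactly one degree-$2$ vertex and each internal block to contain exactly two, yielding the path-like structure of Figure~\ref{fig:path-like}.

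The heart of the argument is the classification of admissible blocks. For a prospective middle block $B$ with two prescribed degree-$2$ vertices, I would enumerate the quartic possibilities by increasing order and establish a \emph{surgery principle}: if $B$ is not on the short list $\{M,M_1,M_2,M_3,\tilde M_3\}$ and does not already decompose as a concatenation of the prescribed bricks, then an internal sub-configuration of $B$ can be excised and replaced by a longer admissible chain having the same interface vertices and the same degree sequence, and the replacement strictly decreases $\mu(G)$. The decrease comes from the observation that elongating the graph along the direction of the Fiedler vector's monotone trend lengthens the effective ``path'' that $G$ is modelling, and, as in the cubic case, the Rayleigh quotient then scales like $1/n^2$. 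The same surgery with extremal bricks $D_3',D_4'$ in place of the $M_i'$ gives the classification of end blocks as $D_1,\ldots,D_4$ or long end blocks, and specializing to the case that $G$ itself is a single block identifies $G$ as one of the sporadic graphs of Figure~\ref{fig:G5-G9} for $n\le9$ or as a long complete block when $n\ge10$.

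The principal obstacle is the surgery step itself. It requires an exhaustive and non-redundant enumeration of the internal ``bricks'' so that the list $\{M_1,M_2,M_3,M_1'',M_2'',D_3',D_4'\}$ is provably complete, together with Rayleigh quotient estimates sharp enough to yield strict inequalities after each replacement. Explaining why $\tilde M_3$ is admissible as a middle block while $\tilde M_1,\tilde M_2$ do not appear separately (they coincide with $M_1,M_2$ by intrinsic left-right symmetry), and verifying that the small candidates for $n\le9$ truly minimize, are the most combinatorially delicate points of the argument.
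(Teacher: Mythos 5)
Your outline leaves the central mechanism of the proof unproven, and the mechanism you propose is not the one that actually works. The paper's entire argument rests on Lemma~\ref{sw}: a degree-preserving edge switching $\sw(a,b,c,d)$, performed ``properly'' with respect to the ordering induced by a Fiedler vector, \emph{does not increase} $\mu$. Starting from an \emph{arbitrary} connected quartic graph with properly labeled vertices, a long sequence of such switchings (Lemmas~\ref{lem:r+1--r+2}--\ref{lem:begin2}, Theorem~\ref{thm:quarticTOmaximal}) rebuilds the graph vertex by vertex into a member of the family $\M$; no a priori structural assumption is made. Your proposal instead presupposes structure at the outset: the quartic analogue of Lemma~\ref{lem:sign} that you invoke (``averaging against the natural top/bottom involutions of the candidate blocks'') requires already knowing that the graph decomposes into cells with those involutions, which is precisely what is to be proved. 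In the paper the corresponding statement (Lemma~\ref{lem:strictly}) is established only \emph{after} the graph has been transformed into a member of $\M$, where the equitable partition is available.

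The second, more serious gap is your ``surgery principle.'' You need, for every non-admissible block, a replacement that \emph{strictly} decreases $\mu$, justified by a Rayleigh-quotient comparison on a stretched test vector. But a test vector only bounds $\mu(G')$ from above, and to get $\mu(G')<\mu(G)$ strictly you would need quantitative control (e.g.\ strict monotonicity) of the Fiedler vector of the unknown minimizer $G$ --- which you cannot assume. The paper deliberately avoids strict inequalities: switchings give only $\mu(G')\le\mu(G)$, so the conclusion is that \emph{some} minimizer lies in $\M$; the fact that \emph{every} minimizer does is then recovered by the equality analysis of Theorem~\ref{thm:HisoG}, where $\mu(G_{t-1})=\mu(G)$ forces $\rho_a=\rho_d$ or $\rho_c=\rho_b$, hence (by Lemma~\ref{lem:strictly} and Lemma~\ref{lem:iso}) an isomorphism at every stage. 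Without this two-step structure, your argument has no way to rule out a minimizer outside the list whose Fiedler vector happens to make your surgery non-strict. Finally, the ``exhaustive and non-redundant enumeration of bricks,'' which you flag as an obstacle, is not a side issue: it is the bulk of the proof (the case analyses of Lemmas~\ref{lem:begin1} and \ref{lem:begin2} and Table~\ref{tab:v,v+1}), and also where the sporadic graphs $G_5,\ldots,G_9$ and the exceptional configurations ($\bar D_1$, the X- and X$'$-shapes) emerge; these cannot be dismissed as a finite computer check since they arise as terminal cases of the switching process for general $n$.
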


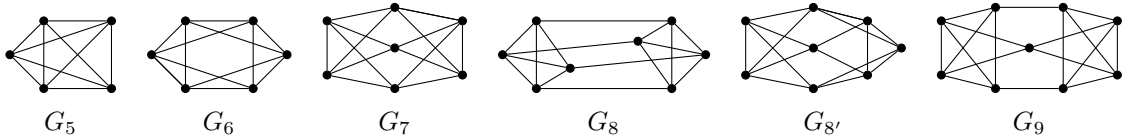
\begin{figure}[h!]
\captionsetup[subfigure]{labelformat=empty}
\centering
\subfloat[$G_5$]{\begin{tikzpicture}[scale=.9]
	\vertex[fill] (1) at (0,0) [] {};
	\vertex[fill] (2) at (.5,.5) [] {};
	\vertex[fill] (3) at (.5,-.5) [] {};
	\vertex[fill] (4) at (1.5,.5) [] {};
	\vertex[fill] (5) at (1.5,-.5) [] {};
	\path
		(1) edge (2)
		(1) edge (3)
	   (1) edge (4)
	   (1) edge (5)
		(2) edge (3)
		(2) edge (4)
        (2) edge (5)
	    (3) edge (4)
		(3) edge (5)
	    (4) edge (5)
		;
\end{tikzpicture}}
\quad
\subfloat[$G_6$]{\begin{tikzpicture}[scale=.9]
	\vertex[fill] (1) at (0,0) [] {};
	\vertex[fill] (2) at (.5,.5) [] {};
	\vertex[fill] (3) at (.5,-.5) [] {};
	\vertex[fill] (4) at (1.5,.5) [] {};
	\vertex[fill] (5) at (1.5,-.5) [] {};
   \vertex[fill] (6) at (2,0) [] {};
	\path
		(1) edge (2)
		(1) edge (3)
	   (1) edge (4)
	   (1) edge (5)
		(2) edge (3)
		(2) edge (4)
        (2) edge (6)
		(3) edge (1)
	    (3) edge (5)
	    (3) edge (6)
		(4) edge (6)
	   (5) edge (6)
		(5) edge (4)
	    ;
\end{tikzpicture}}
\quad
\subfloat[$G_7$]{\begin{tikzpicture}[scale=.9]
    \vertex[fill] (1) at (0,.9) [] {};
	\vertex[fill] (2) at (0,.1) [] {};
	\vertex[fill] (3) at (1,1.1) [] {};
	\vertex[fill] (4) at (1,.5) [] {};
	\vertex[fill] (5) at (1,-.1) [] {};
   \vertex[fill] (6) at (2,.9) [] {};
   \vertex[fill] (7) at (2,.1) [] {};
	\path
		(1) edge (2)
		(1) edge (3)
	   (1) edge (4)
	   (1) edge (5)
		(2) edge (3)
		(2) edge (4)
        (2) edge (5)
		(3) edge (6)
	    (3) edge (7)
	    (3) edge (6)
		(4) edge (6)
	   (4) edge (7)
		(5) edge (6)
	(5) edge (7)
	(6) edge (7)
	;
\end{tikzpicture}}
\quad
\subfloat[$G_8$]{\begin{tikzpicture}[scale=.9]
    \vertex[fill] (1) at (0,0) [] {};
	\vertex[fill] (2) at (.5,.5) [] {};
	\vertex[fill] (3) at (.5,-.5) [] {};
	\vertex[fill] (4) at (1,-.2) [] {};
	\vertex[fill] (5) at (2,.2) [] {};
   \vertex[fill] (6) at (2.5,.5) [] {};
   \vertex[fill] (7) at (2.5,-.5) [] {};
   \vertex[fill] (8) at (3,0) [] {};
	\path
		(1) edge (2)
		(1) edge (3)
	   (1) edge (4)
	   (1) edge (5)
		(2) edge (3)
		(2) edge (4)
        (2) edge (6)
		(3) edge (4)
	    (3) edge (7)
		(4) edge (8)
	   (5) edge (6)
		(5) edge (7)
	(5) edge (8)
	(6) edge (7)
	(6) edge (8)
	(7) edge (8)
	    ;
\end{tikzpicture}}
\quad
\subfloat[$G_{8'}$]{\begin{tikzpicture}[scale=.9]
	 \vertex[fill] (1) at (.2,.8) [] {};
	\vertex[fill] (2) at (.2,0) [] {};
	\vertex[fill] (3) at (1.2,1) [] {};
	\vertex[fill] (4) at (1.2,.4) [] {};
	\vertex[fill] (5) at (1.2,-.2) [] {};
   \vertex[fill] (6) at (2,.8) [] {};
   \vertex[fill] (7) at (2,0) [] {};
   \vertex[fill] (8) at (2.5,.4) [] {};
	\path
		(1) edge (2)
		(1) edge (3)
	   (1) edge (4)
	   (1) edge (5)
		(2) edge (3)
		(2) edge (4)
        (2) edge (5)
		(3) edge (6)
	    (3) edge (8)
	    (3) edge (6)
		(4) edge (6)
	   (4) edge (7)
		(5) edge (8)
	  (5) edge (7)
	 (6) edge (8)
      (6) edge (7)
	 (8) edge (7);
\end{tikzpicture}}
\quad
\subfloat[$G_9$]{\begin{tikzpicture}[scale=.9]
	 \vertex[fill] (1) at (0,.9) [] {};
	\vertex[fill] (2) at (0,.1) [] {};
	\vertex[fill] (3) at (.8,1.1) [] {};
	\vertex[fill] (4) at (.8,-.1) [] {};
	\vertex[fill] (5) at (1.3,.5) [] {};
   \vertex[fill] (6) at (1.8,1.1) [] {};
   \vertex[fill] (7) at (1.8,-.1) [] {};
   \vertex[fill] (8) at (2.6,.9) [] {};
\vertex[fill] (9) at (2.6,.1) [] {};
	\path
		(1) edge (2)
		(1) edge (3)
	   (1) edge (4)
	   (1) edge (5)
		(2) edge (3)
		(2) edge (4)
        (2) edge (5)
	    (3) edge (4)
		(3) edge (6)
	    (4) edge (7)
	     (5) edge (8)
	      (5) edge (9)
	      (6) edge (9)
	      (8) edge (6)
	     (8) edge (7)
	      (6) edge (7)
	      (7) edge (9)
	       (8) edge (9);
\end{tikzpicture}}
\caption{The graphs of Theorem~\ref{thm:quartic} on $n\le9$ vertices}\label{fig:G5-G9}
\end{figure}

Subsection~\ref{subsec:proof} is devoted to the proof of Theorem~\ref{thm:quartic}. In fact, Theorem~\ref{thm:quartic} follows from Theorems~\ref{thm:quarticTOmaximal} and \ref{thm:HisoG} below.

\subsection{Elementary moves and their effect on algebraic connectivity}

In this subsection we present the main tool of the proof of Theorem~\ref{thm:quartic}, that is, a local operation on edges of a graph which preserves the degree sequence of the graph.

  Let $G$ be a graph. By `$\sim$' and `$\nsim$' we denote, respectively, adjacency and non-adjacency in $G$.
An {\em elementary move} or {\em switching}  in $G$ is a switching of parallel edges: let $a \sim b,c \sim d$ and  $a \nsim c,b \nsim d$, then the elementary move denoted by $\sw(a, b, c, d)$ removes the edges $ab$ and $cd$ and replaces
them  by the edges $ac$ and $bd$.
\begin{definition} \label{ro}\rm
Let $G$ be a graph and  $\rho: V(G)\longrightarrow \mathbb{R}$  be a Fiedler vector of $G$, considered as a weighting on the vertices; for $v \in V(G)$ we write $\rho_v=\rho(v)$.
 For convenience, we may assume the vertex set is $[n] =\{1, 2, \ldots, n\}$ and that the vertices are numbered so that $\rho_1\geq \rho_2\geq\cdots\geq\rho_n$.
We call this a {\em proper labeling} of the vertices (with respect to the eigenvector $\rho$).
\end{definition}

The following two lemmas were initially used by Guiduli \cite{Guiduli} (see also \cite{GuiduliThesis}) for cubic graphs but they also hold  for quartic graphs.

\begin{lemma}\label{sw}
Let $G$ be a connected graph.
Let $\rho: V(G)\longrightarrow \mathbb{R}$ be
a Fiedler vector of $G$.  If there are vertices $\{a, b, c, d\}$ in $G$ such that $a \sim b$,
 $c \sim d$,  $a \nsim c$, $b \nsim d$, with $\rho_a\geq\rho_d$, and  $\rho_c\geq\rho_b$,   then  $\sw(a, b, c, d)$ does not increase the algebraic connectivity.
\end{lemma}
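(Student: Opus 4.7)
The plan is to apply the Rayleigh quotient characterization \eqref{eq} of the algebraic connectivity to the switched graph $G'$, using $\rho$ itself as the test vector. Since $\rho$ is a Fiedler vector of $G$, we have $\rho \perp \mathbf{1}$ and $\rho \neq \mathbf{0}$, and switching does not alter the vertex set, so $\rho$ remains an admissible test vector for $G'$. This yields
\[
\mu(G') \leq \frac{\rho^\top L(G')\, \rho}{\rho^\top \rho},
\]
regardless of whether $G'$ happens to be connected (if it is disconnected then $\mu(G') = 0 \leq \mu(G)$ trivially). Thus the whole task is to compare the two quadratic forms $\rho^\top L(G)\rho$ and $\rho^\top L(G')\rho$.

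Using the edge-sum identity $\mathbf{x}^\top L(H) \mathbf{x} = \sum_{ij \in E(H)} (x_i - x_j)^2$, these two quadratic forms agree on every edge except the four that are affected by the switch, namely the removed edges $ab,cd$ and the inserted edges $ac,bd$. Hence the entire proof reduces to computing
\[
\Delta := \rho^\top L(G')\, \rho - \rho^\top L(G)\, \rho = (\rho_a - \rho_c)^2 + (\rho_b - \rho_d)^2 - (\rho_a - \rho_b)^2 - (\rho_c - \rho_d)^2.
\]
Expanding each square, the purely quadratic terms $\rho_a^2, \rho_b^2, \rho_c^2, \rho_d^2$ all cancel, and the surviving cross terms regroup into the clean factorisation
\[
\Delta = 2(\rho_a - \rho_d)(\rho_b - \rho_c).
\]

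The sign hypotheses $\rho_a \geq \rho_d$ and $\rho_c \geq \rho_b$ now force the two factors to have opposite signs, so $\Delta \leq 0$. Consequently $\rho^\top L(G')\rho \leq \rho^\top L(G)\rho = \mu(G)\,\rho^\top\rho$, and dividing by $\rho^\top\rho > 0$ gives $\mu(G') \leq \mu(G)$. There is no real obstacle: the argument rests entirely on the single factoring identity for $\Delta$, with the hypotheses tailor-made so that its two factors have the right signs. The only point worth emphasising is that we never need $\rho$ to be a Fiedler vector of $G'$; the Rayleigh bound only requires $\rho$ to lie in the admissible class $\{\mathbf{x} : \mathbf{x} \perp \mathbf{1}\}$, which is preserved by the switch.
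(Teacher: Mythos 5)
Your proof is correct. The paper itself states this lemma without proof (attributing it to Guiduli), but the factorisation you derive, $\rho^\top L(G')\rho-\rho^\top L(G)\rho=2(\rho_a-\rho_d)(\rho_b-\rho_c)\le0$, combined with the Rayleigh quotient bound \eqref{eq} applied to $\rho$ on the switched graph, is precisely the argument the authors invoke later in the proof of Theorem~\ref{thm:HisoG}, so your route is the intended one and is complete.
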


\begin{definition} \rm
A switch or elementary move is said to be {\em proper} if it satisfies the conditions of Lemma~\ref{sw}.
In particular, with proper labeling on the vertices, $\sw(a, b, c, d)$ is proper if $a<d$ and  $c<b$.
\end{definition}
We will use proper switchings  to  transfer the graphs  into  the path-like structure without increasing the algebraic connectivity. The following lemma keeps the graph connected during this procedure.
\begin{lemma}\label{lem:connected}
Let $G$ be a properly labeled connected graph on $[n]$ and $n'<n$.
  Assume that $G\setminus [n']$ is disconnected and that each of its
components has an edge which is not a bridge. Then we may reconnect the graph using proper
elementary moves to make $G\setminus [n']$ connected, not increasing the  algebraic connectivity.
\end{lemma}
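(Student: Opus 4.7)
Plan.
I would proceed by induction on $t$, the number of components of $G\setminus[n']$; the case $t=1$ is vacuous. For the inductive step with $t\ge 2$, my goal is to exhibit a single proper elementary move that merges two components of $G\setminus[n']$ while keeping $G$ connected; iterating it completes the reconnection.

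The key observation is this: if $\{a,b\}$ is a non-bridge edge in a component $C_i$ and $\{c,d\}$ a non-bridge edge in a different component $C_j$ of $G\setminus[n']$, then (i) each is already a non-bridge of $G$ (any cycle of a component of $G\setminus[n']$ is a cycle of $G$), and because the two defining cycles are vertex-disjoint, removing both edges preserves the connectedness of $G$; (ii) the cross-pairs $a\nsim c$, $b\nsim d$, $a\nsim d$, $b\nsim c$ all hold in $G$ automatically, since the four vertices lie outside $[n']$ and span two distinct components of $G\setminus[n']$; (iii) one of the two edges created by the switch necessarily joins $C_i$ to $C_j$, merging them in $G\setminus[n']$. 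Hence propriety is the only condition that needs checking. Writing each edge with its smaller endpoint first ($a\le b$ and $c\le d$), a direct case check shows that $\sw(a,b,c,d)$ is proper iff the label intervals $[a,b]$ and $[c,d]$ are not disjoint, while the alternative orientation $\sw(a,b,d,c)$ is proper exactly when one interval contains the other. Thus some proper switch between the two non-bridge edges exists iff the intervals meet.

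The task therefore reduces to exhibiting, among the non-bridge edges of distinct components, two whose label intervals overlap. Let $v^*$ be the vertex of smallest label lying on some cycle of $G\setminus[n']$, and let $C_1$ be its component. For any other component $C_2$ and any non-bridge edge $\{c,d\}\subseteq C_2$ with $c\le d$: if a cycle of $C_1$ contains a vertex of label $\ge c$, then walking that cycle from $v^*$ I meet a first edge $\{a,b\}$ whose larger endpoint reaches or exceeds $c$; its interval then overlaps $[c,d]$ and the desired proper switch is at hand.

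The main obstacle is the \emph{separated subcase}, where for every pair $C_i\ne C_j$ the non-bridge-edge label intervals are disjoint. Here I would use an auxiliary proper switch coupling a non-bridge edge of one component with an edge from $[n']$ to the other. Say WLOG $M_1<m_2$, where $M_1$ is the maximum non-bridge-edge label in $C_1$ and $m_2$ the minimum in $C_2$. Since $G$ is connected, there is an edge $pq$ with $p\in[n']$ and $q\in C_2$; assuming $q\ge M_1$ and some non-bridge edge $\{q',q''\}$ of $C_1$ satisfies $p\nsim q'$, the switch $\sw(p,q,q',q'')$ is proper (since $p\le n'<q''$ and $q'\le M_1\le q$), leaves $G$ connected (as $\{q',q''\}$ is a non-bridge), and introduces the edge $qq''$ joining $C_1$ to $C_2$ in $G\setminus[n']$. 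The delicate point is to secure the existence of such $p,q,q',q''$—particularly the nonadjacency $p\nsim q'$—which I would handle by varying the choices and invoking a degree-counting argument showing that any obstruction forces a small exceptional configuration that can be resolved by inspection. By Lemma~\ref{sw} none of these moves increases the algebraic connectivity, and iterating them drives $t$ down to $1$.
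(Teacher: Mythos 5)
The paper does not actually prove this lemma: it is quoted from Guiduli \cite{Guiduli,GuiduliThesis} with the remark that the cubic-case argument carries over to quartic graphs, so there is no in-paper proof to measure you against. Judged on its own terms, the first half of your proposal is sound and is essentially the standard reduction: for two vertex-disjoint non-bridge edges $ab\subseteq C_i$ and $cd\subseteq C_j$ lying in different components of $G\setminus[n']$, the adjacency/non-adjacency hypotheses of Lemma~\ref{sw} are automatic, the graph stays connected because each removed edge lies on a cycle inside its own component, both new edges join $C_i$ to $C_j$, the merged component again contains a cycle, and your interval criterion for when one of the four orientations of the switch is proper is correct (all four fail precisely when the label intervals are disjoint). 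Iterating proper switches with respect to the fixed original Fiedler vector is also legitimate.

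The genuine gap is the \emph{separated subcase}, which is exactly where the whole difficulty of the lemma sits, and your treatment of it is not a proof. Your proposed move $\sw(p,q,q',q'')$ with $p\in[n']$, $q\in C_2$, and $q'q''$ a non-bridge edge of $C_1$ rests on two hypotheses you simply ``assume'': that $q\ge q'$ (you write $q\ge M_1$, but $q$ is merely \emph{some} vertex of $C_2$ adjacent to $[n']$ --- it need not lie on a cycle of $C_2$, so nothing forces its label to exceed $M_1$, and if $q<q'$ the propriety condition $\rho_{q'}\ge\rho_q$ fails), and that $p\nsim q'$, which is not automatic since $p\in[n']$ may be adjacent to vertices of $C_1$. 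Deferring these to ``a degree-counting argument'' and ``inspection of small exceptional configurations'' is not acceptable here: no such argument is exhibited, and the lemma as stated makes no degree assumption on $G$ at all, so it is not even clear what would be counted. Until you either discharge these two hypotheses (e.g., by a careful analysis of which $[n']$--$C_2$ edges and which cycle edges of $C_1$ are available, possibly exchanging the roles of $C_1$ and $C_2$) or replace this step by a different mechanism for the separated case, the induction cannot be closed and the lemma remains unproved.
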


 In the arguments which follow, we use proper elementary moves to connect two specific vertices $x$ and $y$.
 The following remark demonstrates when such a switch does,  or does not, exist.

 \begin{remark} \label{remark}\rm
 Let $G$ be a graph whose vertices $[n]$ are properly labeled and $x,y$  be two vertices of  $G$ with $x<y$.
 Suppose we are looking for a proper switch to connect $x$ and $y$ without altering the induced subgraph on $[x]$.
 From Lemma~\ref{sw} it is evident that such a switch does not exist if and only if
  any neighbor of $x$ in $[n]\setminus[y]$ is adjacent to any neighbor of $y$ in $[n]\setminus[x]$.
   \end{remark}

\subsection{Proof of Theorem~\ref{thm:quartic}} \label{subsec:proof}

 Theorem~\ref{thm:quartic} follows from Theorems~\ref{thm:quarticTOmaximal} and \ref{thm:HisoG}, which will be proved in this subsection.

Hereafter, we assume that $\Gamma$ is a connected quartic graph with $n$ vertices, whose vertices are labeled properly as described in Definition~\ref{ro}.
Our goal is to utilize proper elementary moves to transfer $\Gamma$ to one of the graphs described in Theorem~\ref{thm:quartic}.

 \subsubsection{The subgraph on  the first few vertices}

  Our first goal  is to prove that we can reconnect (by proper elementary moves) the first few vertices of $\Gamma$ to get one of the four subgraphs  $D_1$, $D_2$, $D'_3$, $D'_4$.

  In the process of reconnecting $\Gamma$, we are usually in the situation that for some $r<n$, we have already built some specific subgraph on  $[r]$  and  continue to build a desired subgraph on  $\Gamma\setminus[r]$ in a way not to alter the subgraph already constructed on $[r]$.   The next two lemmas deal with such situations.

\begin{lemma}\label{lem:r+1--r+2}
	Let $H$ be a connected graph with vertices $r+1,\ldots,r+m$ where all the vertices have degree $4$ except the first  two, which
have degree $3$.   Then, by proper  switchings, $H$ can be transferred into a graph in which $r+1\sim r+2$, or  $m=5,8$ and $H$ can be transferred into $\tilde D'_4$ or $\tilde D'_3$, respectively.
\end{lemma}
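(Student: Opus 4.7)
The plan is to try to create the edge $(r+1)(r+2)$ by a single proper switch of the form $\sw(r+1,b,r+2,d)$ with $b\in N_H(r+1)$ and $d\in N_H(r+2)$. By Remark~\ref{remark}, such a switch is available precisely when some $b,d$ satisfy $b\not\sim d$; the proper-labeling conditions $r+1<d$ and $r+2<b$ coming from Lemma~\ref{sw} are automatic, because $r+1\not\sim r+2$ forces both $b$ and $d$ to lie in $\{r+3,\ldots,r+m\}$. If such a pair exists we are done.

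Assuming no such pair exists, every neighbour of $r+1$ is adjacent in $H$ to every neighbour of $r+2$. Write $N_i=N_H(r+i)$, so $|N_i|=3$, and set $k=|N_1\cap N_2|$. I would argue by cases on $k$. If $k\in\{1,2\}$, any common neighbour $u$ would be adjacent to $r+1$, $r+2$, and every other vertex of $N_1\cup N_2$, giving $\deg_H(u)\ge 5$, which contradicts $\deg_H(u)=4$. If $k=3$, then $N_1=N_2$ induces a triangle, each such vertex has its four edges accounted for by the two edges to $r+1,r+2$ and the two triangle edges, so connectedness yields $m=5$ and $H\cong K_5-(r+1)(r+2)\cong\tilde D'_4$. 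If $k=0$, then $N_1,N_2$ are disjoint and span a complete bipartite graph $K_{3,3}$; again every vertex of $N_1\cup N_2$ has its four edges fully used, so connectedness forces $m=8$ and pins $H$ down to the specific bipartite-like graph on $\{r+1,r+2\}\cup N_1\cup N_2$.

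For this $k=0$ case I would apply an additional proper switch within $N_1\cup N_2$. A short pigeonhole argument on the six labels of $N_1\cup N_2$ shows that either two vertices of $N_1$ both precede two vertices of $N_2$, or two vertices of $N_2$ both precede two vertices of $N_1$; correspondingly a switch $\sw(v,w,v',w')$ or $\sw(w,v,w',v')$ with $v,v'\in N_1$, $w,w'\in N_2$ is proper, and it replaces a $K_{3,3}$ edge by an intra-$N_1$ or intra-$N_2$ edge. This creates the missing non-adjacency between $N_H(r+1)$ and $N_H(r+2)$, so a follow-up switch $\sw(r+1,b,r+2,d)$ now produces the edge $(r+1)(r+2)$ — unless the two-step sequence has already transferred $H$ into $\tilde D'_3$, which is the exception recorded for $m=8$. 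The principal obstacle is the bookkeeping in this final $k=0$ case: one must inspect every relative labeling of $N_1\cup N_2$ to verify that the claimed switches are always proper and that the sole residual form at which the procedure could legitimately halt without producing $(r+1)(r+2)$ is indeed the mirror block $\tilde D'_3$.
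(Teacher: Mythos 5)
Your proposal is correct, and its first half coincides with the paper's: attempt the single switch $\sw(r+1,x,r+2,y)$, and observe that its failure forces every neighbour of $r+1$ to be adjacent to every neighbour of $r+2$. Your classification by $k=|N_1\cap N_2|$ is a cleaner justification than the paper's bare assertion that only two configurations can then occur: the degree count $\deg(u)\ge 7-k$ for a common neighbour $u$ rules out $k=1,2$, while $k=3$ gives $K_5$ minus the edge joining $r+1$ and $r+2$, i.e.\ $\tilde D'_4$ with $m=5$, and $k=0$ gives two apexes over a $K_{3,3}$, forcing $m=8$. Where you genuinely diverge is the $k=0$ case. The paper performs a longer, carefully ordered sequence of four switchings whose endpoint is exactly $\tilde D'_3$; you instead make one proper switch inside $N_1\cup N_2$ (comparing the medians of $N_1$ and $N_2$ under the labeling guarantees properness) to destroy a cross edge, and then a second switch to create $r+1\sim r+2$. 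Both computations are valid and both satisfy the lemma's disjunctive conclusion; in fact your closing hedge that the procedure might ``halt at $\tilde D'_3$'' is unnecessary, since after the first switch the pair needed for the follow-up switch is always available. What the paper's longer route buys is the sharper normal form: landing precisely on $\tilde D'_3$ is what makes the ``respectively'' clause of the statement meaningful, and it is the form invoked in the proof of Lemma~\ref{lem:begin2} to produce the end block $\tilde D_3$ immediately when $\ell=9$. With your version the $m=8$ configuration exits through the first disjunct instead, so that downstream instance would have to be carried along the generic branch of the later arguments rather than terminating at once; this is harmless for the lemma as stated but worth flagging if you intend to reuse the surrounding proofs verbatim.
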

\begin{proof}
If some neighbor $x$ of  $r + 1$ is not adjacent to some neighbor $y$ of  $r + 2$, then  $\sw(r + 1, x, r + 2, y)$ connects $r+1$ to $r+2$.
Otherwise any neighbor of $r+1$ is adjacent with any neighbor of $r+2$.
This is only possible in two cases: (i) $r + 1$ and $r + 2$  share three neighbors  all of which  are adjacent  to each other or  (ii) $H$ is the graph of Figure~\ref{fig:lemma3.7a}.  If (i) is the case, then $m =  5$ and $H$ is  $\tilde D'_4$.
 In the case (ii), with no loss of generality, we assume that $x<z<k$, $y<w<l$, and $x<y$.
We first $\sw(r+2, y, x, w)$ and then $\sw(r+2, x, r + 1, z)$, which  result in Figure~\ref{fig:lemma3.7b}.
Now we perform $\sw(r+2, w, x, l)$, and then either $\sw(r+2, l ,k, y)$ if $k<l$ or $\sw(r+1, k, l, z)$ if $l<k$.
As a result we obtain  $\tilde D'_3$ (the outcome  in the case $k<l$ is shown in Figure~\ref{fig:lemma3.7c}).
\end{proof}

 \begin{figure}[h!]
\centering
\subfloat[]{\begin{tikzpicture}[scale=.95]
\vertex[fill] (r1) at (2,1) [label=above:\scriptsize{$r+1$}] {};
\vertex[fill] (r2) at (2,-1) [label=below:\scriptsize{$r+2$}] {};
\vertex[fill] (x) at (1.2,.4) [label=left:\scriptsize{$x$}] {};
\vertex[fill] (z) at (2,.4) [label=right:\scriptsize{$z$}] {};
\vertex[fill] (k) at (2.8,.4) [label=right:\scriptsize{$k$}] {};
\vertex[fill] (y) at (1.2,-.4) [label=left:\scriptsize{$y$}] {};
\vertex[fill] (w) at (2,-.4) [label=right:\scriptsize{$w$}] {};
\vertex[fill] (l) at (2.8,-.4) [label=right:\scriptsize{$l$}] {};
\tikzstyle{vertex}=[circle, draw, inner sep=0pt, minimum size=0pt]
    \vertex (s) at (1.6,.9) [label=right:$$] {};
	\vertex (ss) at (1.6,-.9) [label=right:$$] {};
	\path
	    (r1) edge (x)
		(r1) edge (z)
	    (r1) edge (k)
	    (r2) edge (y)
		(r2) edge (w)
	    (r2) edge (l)
	     (x) edge (y)
		(x) edge (w)
	    (x) edge (l)
	     (z) edge (y)
		(z) edge (w)
	    (z) edge (l)
	     (k) edge (y)
		(k) edge (w)
	    (k) edge (l);
\end{tikzpicture}\label{fig:lemma3.7a}}
 \quad \quad \quad
\subfloat[]{\begin{tikzpicture}
	\vertex[fill] (r1) at (2,1) [label=above:\scriptsize{$r+1$}] {};
    \vertex[fill] (r2) at (2,-1) [label=below:\scriptsize{$r+2$}] {};
	\vertex[fill] (x) at (1.2,.4) [label=left:\scriptsize{$x$}] {};
\vertex[fill] (z) at (2,.4) [label=right:\scriptsize{$z$}] {};
\vertex[fill] (k) at (2.8,.4) [label=right:\scriptsize{$k$}] {};
\vertex[fill] (y) at (1.2,-.4) [label=left:\scriptsize{$y$}] {};
\vertex[fill] (w) at (2,-.4) [label=right:\scriptsize{$w$}] {};
\vertex[fill] (l) at (2.8,-.4) [label=right:\scriptsize{$l$}] {};
\tikzstyle{vertex}=[circle, draw, inner sep=0pt, minimum size=0pt]
    \vertex (s) at (1.6,.9) [label=right:$$] {};
	\vertex (ss) at (1.6,-.9) [label=right:$$] {};
	\path
      (r1) edge[bend right=30] (r2)
	    (r1) edge (x)
	    (r1) edge (k)
		(r2) edge (w)
	    (r2) edge (l)
	     (x) edge (y)
	    (x) edge (l)
	     (z) edge (y)
		(z) edge (w)
	    (z) edge (l)
	     (k) edge (y)
		(k) edge (w)
	    (k) edge (l)
	    (y) edge (w)
	    (x) edge (z) ;
\end{tikzpicture}\label{fig:lemma3.7b}}
\quad \quad \quad
\subfloat[] {\begin{tikzpicture}[scale=.95]
	\vertex[fill] (r1) at (0,1) [label=above:\scriptsize{$z$}] {};
	\vertex[fill] (r2) at (-.9,1.2) [label=above:\scriptsize{$y$}] {};
	\vertex[fill] (r3) at (0,0) [label=below:\scriptsize{$l$}] {};
	\vertex[fill] (r4) at (-.9,-.2) [label=below:\scriptsize{$w$}] {};
    \vertex[fill] (r5) at (-1.8,1) [label=above:\scriptsize{$x$}] {};
    \vertex[fill] (r6) at (-1.8,0) [label=below:\scriptsize{$k$}] {};
     \vertex[fill] (r7) at (-2.8,1) [label=above:\scriptsize{$r+1$}] {};
    \vertex[fill] (r8) at (-2.8,0) [label=below:\scriptsize{$r+2$}] {};
   	\path
		(r5) edge (r2)
		(r1) edge (r2)
	    (r1) edge (r5)
		(r1) edge (r3)
        (r1) edge (r4)
		(r2) edge (r3)
	    (r2) edge (r4)
	    (r3) edge (r6)
	    (r4) edge (r3)
	    (r4) edge (r6)
	    (r5) edge (r8)
	    (r5) edge (r7)
	    (r6) edge (r8)
	    (r6) edge (r7)
	    (r7) edge (r8)
	   	   ;
\end{tikzpicture}\label{fig:lemma3.7c}}
 \caption{The graph $H$ with $m=8$ before and after switchings  in the proof of Lemma~\ref{lem:r+1--r+2}}\label{fig:lemma3.7}
\end{figure}
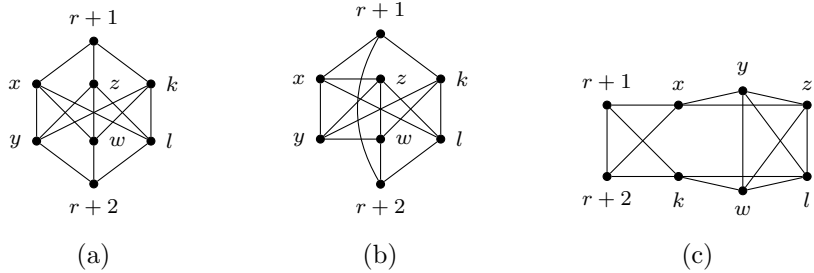

  \begin{lemma}\label{lem:r+1,r+2VerCut}
  	Let $H$ be a graph with vertices $r+1,\ldots,r+m$ where all the vertices have degree $4$ except for $r+1,r+2$ which have exactly two neighbors among  $r+3,\ldots,r+m$. Further $r+1\sim r+3,r+4$.  Then by proper switchings, $H$ can be transferred into a graph with $r+2\sim r+3$, or  $m=6$ and $H$ can
  	be transferred into the the graph $\bar D_1$ (depicted in Figure~\ref{fig:lemma3.8b}).
\end{lemma}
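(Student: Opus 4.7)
The plan is to first seek a single proper switch of the form $\sw(r+2,b,r+3,d)$ that directly installs the edge $r+2\,r+3$; if no such switch exists, I will show that Remark~\ref{remark} pins the structure down so tightly that either $H$ collapses to $\bar D_1$ on exactly six vertices, or enough slack remains for a two-step switching argument. Throughout, I assume $r+2\nsim r+3$, else nothing is to be shown. Write $\{p,q\}=N_H(r+2)$ for the two neighbors of $r+2$ in $H$ (both exceed $r+3$ since $r+2\nsim r+3$), and $\{u,v,w\}=N_H(r+3)\setminus\{r+1\}$.

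For a switch $\sw(r+2,b,r+3,d)$ with $b\in\{p,q\}$ and $d\in\{u,v,w\}$, the labeling conditions $r+2\le d$ and $r+3\le b$ hold automatically, so the switch is proper precisely when $b\neq d$ and $b\nsim d$. If any such pair exists we are done, so assume not. By Remark~\ref{remark}, $b\sim d$ for every admissible pair, and I case-split on $k:=|\{p,q\}\cap\{u,v,w\}|$. The case $k=1$ is impossible: if $p=u$ then $p$ must be adjacent to each of $r+2,\,r+3,\,v,\,w,\,q$, contradicting $\deg(p)=4$. The case $k=0$ gives $N(p)=N(q)=\{r+2,u,v,w\}$ with $p,q,u,v,w$ distinct; here I apply $\sw(r+1,r+3,r+2,p)$, which is proper because $r+1\nsim r+2$, $r+3\nsim p$, $r+1\le p$, and $r+2\le r+3$, creating $r+3\,p$; I then follow with $\sw(r+2,q,r+3,p)$, which is proper since $q\nsim p$, and which creates the edge $r+2\,r+3$.

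The delicate case is $k=2$; WLOG $p=u$ and $q=v$. The ``no-switch'' condition forces $p\sim q$, $p\sim w$, $q\sim w$, so $N(p)=\{r+2,r+3,q,w\}$ and $N(q)=\{r+2,r+3,p,w\}$ are saturated, while $w$ has three fixed neighbors $r+3,p,q$ and one free slot. Since $r+1$ is not in $N(p)$ or $N(q)$, one has $r+4\notin\{p,q\}$, leaving two sub-cases. If $r+4=w$, the free slot of $w$ is filled by $r+1$, the vertex set of $H$ is exactly $\{r+1,r+2,r+3,r+4,p,q\}$, and the resulting adjacencies exhibit $\bar D_1$ with $m=6$. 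Otherwise $r+4\ne w$, and $r+4$ has at least two neighbors outside $\{r+1,r+2,r+3,p,q,w\}$ (the degree saturation at $r+2,r+3,p,q$ excludes them from $N(r+4)$, and $w$ can account for at most one neighbor of $r+4$). Picking $\alpha\in N(r+4)$ with $\alpha\nsim w$ (possible because $N(w)$ has three of its four places fixed and at most one of the eligible candidates coincides with the fourth), I perform $\sw(r+3,w,r+4,\alpha)$, which swaps $r+3$'s neighbor $w$ for $r+4$, and then $\sw(r+2,p,r+3,r+4)$ to install $r+2\,r+3$.

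The main obstacle is the bookkeeping in the $k=2$ case: one must simultaneously track the degree saturation that forces the $K_4$-like core on $\{r+3,r+4,p,q\}$, verify that only the two alternatives above can arise, and check that the ordering and non-adjacency conditions for both switches hold in the $r+4\ne w$ sub-case. Connectivity through the switches is not a serious issue because Lemma~\ref{lem:connected} lets us repair any disconnection with further proper moves.
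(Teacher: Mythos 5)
Your case analysis on $k=|\{p,q\}\cap\{u,v,w\}|$ is a legitimate reorganization of the argument (the paper instead cases on whether $r+2\sim r+4$ and $r+3\sim r+4$), and your $k=1$ and $k=2$ analyses are correct: the degree count killing $k=1$, the identification of $\bar D_1$ when $r+4=w$, and the two-switch escape when $r+4\ne w$ all check out, including the ordering conditions. However, there is a genuine gap in the $k=0$ case. Your opening switch $\sw(r+1,r+3,r+2,p)$ is valid only if $r+1\nsim r+2$, and you assert this non-adjacency without justification; it is not implied by the hypotheses. The lemma only says that $r+1$ and $r+2$ each have exactly two neighbours among $r+3,\ldots,r+m$ --- they are free to be adjacent to each other, which is exactly why the edge $r+1\,(r+2)$ is drawn dashed in Figure~\ref{fig:lemma3.8b}. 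Moreover the paper genuinely needs the adjacent case: in the proof of Lemma~\ref{lem:begin2} this lemma is invoked for $H-r$ \emph{after} the edge $r+1\sim r+2$ has been installed. A concrete configuration falling through your argument: $r+1\sim r+2$, $N(r+2)\setminus\{r+1\}=\{p,q\}$, $N(r+3)\setminus\{r+1\}=\{u,v,w\}$ disjoint from $\{p,q\}$, with $N(p)=N(q)=\{r+2,u,v,w\}$ and $r+4=u$. Here your switch is not even an elementary move, since the edge $r+1\,(r+2)$ it would create already exists, and you offer no alternative.

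The configuration is not a counterexample to the lemma --- it is precisely the paper's Figure~\ref{fig:lemma3.8a}, which the paper resolves by $\sw(r+2,q,r+4,p)$ (creating $r+2\sim r+4$, using $p\nsim q$, which your $k=0$ analysis already gives you) and then reducing to the easy case where $r+2\sim r+4$: pick a neighbour $x$ of $r+3$ with $x\nsim r+4$ (one exists by a degree count at $r+4$) and perform $\sw(r+2,r+4,r+3,x)$. So the gap is localized and repairable, but as written your $k=0$ branch silently assumes $r+1\nsim r+2$ (your definition ``$\{p,q\}=N_H(r+2)$'' already builds this in) and therefore does not prove the stated lemma.
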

\begin{proof}  First suppose that $r+2 \sim r +4$. If  $r+2\nsim  r+3$, then $r+3$ has a neighbor $x$ which is not adjacent with $r+4$ (otherwise $r+4$ should have degree $5$). Then  $\sw(r+2, r +4, r+3,x)$ makes $r+2$ and $r+3$ adjacent.

Next suppose that $r+2 \nsim r +4$.
If some neighbor $x$ of  $r + 3$ is not adjacent to some neighbor $y$ of  $r + 4$, then  $\sw(r + 3, x, r + 4, y)$ connects $r+3$ to $r+4$. Otherwise  $r + 3$ and $r + 4$ must share three neighbors  all of which  are adjacent  to each other.
In this case, the neighbors of $r+2$ and the neighbors of $r+3$ cannot be adjacent. So there is a switch to connect $r+2$ to $r+3$ and we are done.
  Therefore we suppose that   $r+3\sim  r+4$.
If some neighbor of $r+2$ is not adjacent with some neighbor of $r+3$, then we are done. Otherwise either
 we are in the situation of  Figure~\ref{fig:lemma3.8a} and so by $\sw(r+2,y,r+4,x)$,  $r+2\sim r+4$, which implies $r+2\sim r+3$ as discussed above; or $r+2$ and $r+3$ share two neighbors, say {\w $x,y$,} such that $x \sim y$ and $r+4$ is adjacent to both  $x$ and $y$, which means  that $m=6$ and $H$ is the graph $\bar D_1$ given in Figure~\ref{fig:lemma3.8b}. Note  that $r+1$ and $r+2$ can be either adjacent or non-adjacent, which is illustrated by dashed edges in Figure~\ref{fig:lemma3.8}.
\end{proof}
\begin{figure}[h!]
\centering
\subfloat[]{\begin{tikzpicture}[scale=.9]
	\vertex[fill] (r1) at (1.5,.2) [label=left:\scriptsize{$r+1$}] {};
	\vertex[fill] (r2) at (1.5,-1) [label=left:\scriptsize{$r+2 \  $}] {};
	\vertex[fill] (r3) at (3.3,.2) [label=right:\scriptsize{$r+3$}] {};
	\vertex[fill] (r4) at (3.3,-1) [label=right:\scriptsize{$r+4$}] {};
	\vertex[fill] (x) at (2,-1.3) [label=above:\scriptsize{$x$}] {};
	\vertex[fill] (v) at (2.6,-1.3) [label=left:$$] {};
	\vertex[fill] (y) at (1.8,-2) [label=left:\scriptsize{$y$}] {};
	\vertex[fill] (z) at (2.8,-2) [label=right:$$] {};
	\path
		(r1) edge (r3)
		(r3) edge (r4)
	    (r2) edge (x)
		(r2) edge (y)
		(x) edge (v)
		(x) edge (z)
		(y) edge (v)
	    (y) edge (z)
		(y) edge (r4)
		(r3) edge (v)
		(r3) edge (z)
	    (r4) edge (x)
	    (r1) edge (r4);
	     \path[dashed]
	       (r1) edge (r2);
\end{tikzpicture}\label{fig:lemma3.8a}}
\quad \quad \quad
\subfloat[]{\begin{tikzpicture}
    \vertex[fill] (r7) at (3.5,.5) [label=left:\scriptsize{$r+1$}] {};
	\vertex[fill] (r8) at (3.5,-.5) [label=left:\scriptsize{$r+2$}] {};
    \vertex[fill] (r9) at (4.3,.7) [label=above:\scriptsize{$r+4$}] {};
	\vertex[fill] (r10) at (4.3,-.7) [label=below:\scriptsize{$x$}] {};
    \vertex[fill] (r11) at (5.3,.5) [label=right:\scriptsize{$r+3$}] {};
	\vertex[fill] (r12) at (5.3,-.5) [label=right:\scriptsize{$y$}] {};
	\path
	    (r12) edge (r8)
	    (r10) edge (r8)
        (r9) edge (r7)
	    (r11) edge (r7)
	    (r9) edge (r10)
	    (r9) edge (r11)
	    (r9) edge (r12)
	    (r10) edge (r11)
	    (r10) edge (r12)
	    (r11) edge (r12);
	     \path[dashed]
	       (r7) edge (r8);
\end{tikzpicture}\label{fig:lemma3.8b}}
 \caption{An exceptional case  in the proof of Lemma~\ref{lem:r+1,r+2VerCut} and the graph $\bar D_1$}\label{fig:lemma3.8}
\end{figure}
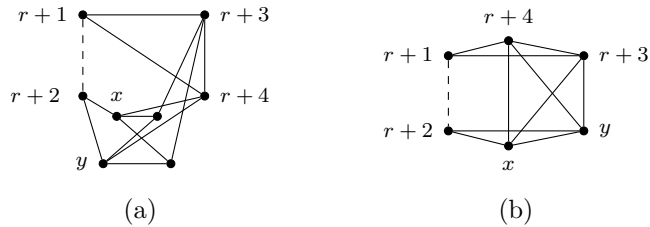

Now, we can prove that the first few vertices of $\Gamma$ can be reconnected to induce the subgraphs asserted in Theorem \ref{thm:quartic}.
 \begin{lemma}\label{lem:begin1}
 The induced subgraphs on the first few vertices in $\Gamma$ can be  transferred by proper switchings into one of the four subgraphs $D_1$, $D_2$, $D'_3$, $D'_4$. Furthermore, if $n\leq 9$, then $\Gamma$ can be  transferred  into one of the graphs $G_5, G_6, G_7, G_8, G_{8'}$, or  $G_9$.
  \end{lemma}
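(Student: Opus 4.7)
My plan is an inductive densification at the low-labeled end of $\Gamma$: apply a sequence of proper elementary moves so that ever-larger prefixes $[r]$ of vertex labels induce increasingly rich subgraphs, until the induced subgraph matches one of the four prescribed end-blocks. The engine is Remark~\ref{remark} in the early stages and Lemmas~\ref{lem:r+1--r+2} and~\ref{lem:r+1,r+2VerCut} once a bounded-degree frontier emerges; Lemma~\ref{lem:connected} repairs any loss of connectedness along the way. First I would force $1\sim 2$: if $1\nsim 2$ and no proper switch $\sw(1,x,2,y)$ is available, then by Remark~\ref{remark} every vertex in $N(1)$ is adjacent to every vertex in $N(2)$, and a case split on $|N(1)\cap N(2)|\in\{0,1,2,3,4\}$ together with a simple degree count produces a vertex of degree at least $5$, contradicting quartic-ness. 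The same argument, carried out inside $\Gamma\setminus\{1\}$ and then $\Gamma\setminus\{2\}$ so as to preserve the edge $\{1,2\}$, forces $1\sim 3$ and $2\sim 3$, yielding a triangle on $[3]$; one more round pushes to a small dense piece on $[4]$.

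Once a dense block has been built on a short prefix $[r]$ with $r\in\{3,4\}$, I would pass to $H=\Gamma\setminus[r]$ and invoke Lemma~\ref{lem:r+1--r+2} or Lemma~\ref{lem:r+1,r+2VerCut} according to whether one or two frontier vertices have degree below $4$ in $H$. Each application either supplies a further proper switch that merges frontier vertices (so the iteration continues and the dense prefix grows by one), or returns one of the exceptional outputs $\tilde D_3'$, $\tilde D_4'$, $\bar D_1$. Read from the opposite end, these exceptional outputs correspond precisely to the end-blocks $D_3'$, $D_4'$, $D_1$ sitting at the left end of $\Gamma$; the remaining non-exceptional stalling branch yields $D_2$. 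Thus $\Gamma$ can be transferred by proper switches into a graph whose induced subgraph on its first few vertices is one of $D_1,D_2,D_3',D_4'$.

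For the supplementary claim with $n\le 9$, the initial block constructed above together with at most a few additional degree-$2$ closures needed to keep the graph quartic already exhausts $V(\Gamma)$. Matching each of the four candidate end-blocks against the vertex budget $5\le n\le 9$ leaves only finitely many possibilities, which a direct check against the connected quartic graphs of that order identifies as exactly $G_5,G_6,G_7,G_8,G_{8'},G_9$. I expect the main obstacle to be the bookkeeping in the iterative stage: after each proper switch the Fiedler vector may change, so the proper-labeling hypothesis must be re-verified with respect to a Fiedler vector of the new graph before the next invocation of Remark~\ref{remark} or the lemmas, and at each step one must track how the degrees and adjacencies in $H$ evolve to ensure the hypotheses of the chosen lemma remain valid.
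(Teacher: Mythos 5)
There is a genuine gap, and it sits exactly where the real work of this lemma lies. Your plan secures only a triangle on $[3]$ (or ``a small dense piece on $[4]$'') and then hands control to Lemmas~\ref{lem:r+1--r+2} and~\ref{lem:r+1,r+2VerCut}. But those lemmas require a residual graph in which every vertex has degree $4$ except for at most two frontier vertices of deficient degree; after you have only built a triangle on $[3]$, six edges leave $[3]$, so $\Gamma\setminus[3]$ can have up to six vertices of degree less than $4$, and neither lemma applies. More fundamentally, the targets $D_1,D_2,D'_3,D'_4$ each contain a $K_4$ or a $K_5$ minus an edge on the first four or five vertices together with further prescribed attachments, whereas Lemmas~\ref{lem:r+1--r+2} and~\ref{lem:r+1,r+2VerCut} only ever create a single new adjacency ($r+1\sim r+2$, resp.\ $r+2\sim r+3$): they cannot manufacture these dense configurations. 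The paper's proof does this by hand: Steps 1--4 force $1\sim2,3,4,5$, $2\sim3,4,5$ and $3\sim4$ through separate case analyses against Remark~\ref{remark} (each with its own exceptional pictures), and Steps 5--6 pass through the intermediate subgraphs $H_1,H_2$ of Figure~\ref{fig:H1H2} before arriving at $D_1,D_2,D'_3,D'_4$. None of that is replaced by your two invocations.

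Your mechanism for producing the four end blocks is also not correct as stated. The exceptional outputs $\tilde D'_3,\tilde D'_4,\bar D_1$ of Lemmas~\ref{lem:r+1--r+2} and~\ref{lem:r+1,r+2VerCut} occur only when $H$ is the entire residual graph of a specific small order ($m=5,8$ or $6$); they are mirror-image blocks closing off the \emph{right-hand} (lowest-weight) end of the ordering, used later in Theorem~\ref{thm:quarticTOmaximal}, and no re-reading turns them into the left end blocks $D'_3,D'_4,D_1$ on the first few (highest-weight) vertices; $D_2$ certainly does not fall out of a ``non-exceptional stalling branch'' but is built explicitly from $H_2$ in Step~6. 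Likewise, $G_5,\dots,G_9$ are not ``the connected quartic graphs of order at most $9$'' (there are many more); they are the specific graphs reached when particular switches in Steps 3--5 are unavailable, so the $n\le9$ clause cannot be discharged by enumerating small quartic graphs. Finally, your closing worry points the wrong way: the proper labeling is fixed once from a Fiedler vector of the original $\Gamma$ and retained through the entire sequence of switches (this is what makes the chain of Rayleigh-quotient inequalities close up); re-deriving a Fiedler vector after each switch, as you suggest one must, would invalidate the inductive construction on the prefixes $[r]$.
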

 \begin{proof}
 In Steps 1--5 below, we show that the induced subgraph on the first five to seven vertices of $\Gamma$ can be  transferred  into $D'_4$ or to one of the subgraphs $H_1,H_2$ given in Figure~\ref{fig:H1H2}, or $\Gamma$ has at most 9 vertices and it is one of the graphs $G_5, G_6, G_7, G_8, G_{8'}$, or $G_9$.
  In the final Step 6, from $H_1,H_2$ we obtain one of $D_1, D_2, D'_3, D'_4$.
\begin{figure}[h!]
\centering\captionsetup[subfigure]{labelformat=empty}
\subfloat[$H_1$]{\begin{tikzpicture}
	\vertex[fill] (r1) at (0,.8) [label=above:\scriptsize{$1$}] {};
	\vertex[fill] (r2) at (.8,1) [label=above:\scriptsize{$2$}] {};
	\vertex[fill] (r3) at (0,0) [label=below:\scriptsize{$3$}] {};
	\vertex[fill] (r4) at (.8,-.2) [label=below:\scriptsize{$4$}] {};
    \vertex[fill] (r5) at (1.6,.8) [label=above:\scriptsize{$5$}] {};
    \vertex[fill] (r6) at (1.6,0) [label=below:\scriptsize{$6$}] {};
   \tikzstyle{vertex}=[circle, draw, inner sep=0pt, minimum size=0pt]
  	\path
		(r5) edge (r2)
		(r1) edge (r2)
	    (r1) edge (r5)
		(r1) edge (r3)
        (r1) edge (r4)
		(r2) edge (r3)
	    (r2) edge (r4)
	    (r3) edge (r6)
	    (r4) edge (r3)
	    (r4) edge (r6)	  ;
\end{tikzpicture}}
\quad \quad \quad \quad
\subfloat[$H_2$]{\begin{tikzpicture}
	\vertex[fill] (r1) at (0,0) [label=below:\scriptsize{$1$}] {};
	\vertex[fill] (r2) at (0,.8) [label=above:\scriptsize{$2$}] {};
	\vertex[fill] (r3) at (.8,1) [label=above:\scriptsize{$3$}] {};
	\vertex[fill] (r4) at (.8,-.2) [label=below:\scriptsize{$4$}] {};
    \vertex[fill] (r5) at (1.3,.4) [label=right:\scriptsize{$\  5$}] {};
    \vertex[fill] (r7) at (2,.8) [label=above:\scriptsize{$6$}] {};
    \vertex[fill] (r6) at (2,0) [label=below:\scriptsize{$7$}] {};
	\path
		(r5) edge (r2)
		(r1) edge (r2)
	    (r1) edge (r5)
		(r1) edge (r3)
        (r1) edge (r4)
		(r2) edge (r3)
	    (r2) edge (r4)
	    (r4) edge (r3)
	    (r3) edge (r7)
	    (r5) edge (r7)
	    (r5) edge (r6)
	    (r4) edge (r6)   ;
\end{tikzpicture}}
\caption{Two subgraphs on the first few vertices of $\Gamma$}\label{fig:H1H2}
\end{figure}
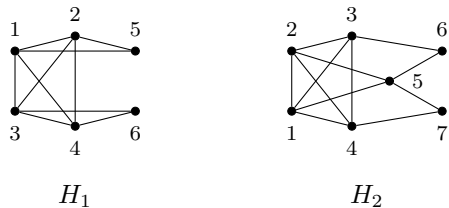

 \noindent \textbf{Step 1.} {\em Connecting  $1$ to $2, 3, 4, 5$ and $2$ to $3$}.
 Assume that $1$ is not adjacent with some $x\in\{2, 3, 4, 5\}$. So $1$ has some neighbor $y>5$.
  It is not possible that $y$ is adjacent with any neighbor of $x$ (this requires $y$ having degree $5$).
  So there is some vertex $z$ such that $z\sim x$ and $z\nsim y$. Now, $\sw(1, y, x, z)$ makes 1 adjacent to $x$.

We may assume that $\Gamma-1$ is connected. The desired switch to $2\sim3$ is possible, unless  $2$ and $3$ share three neighbors in $\Gamma-1$ and all the three neighbors are adjacent to each other. But this contradicts the fact that $\Gamma-1$ is connected.

\noindent \textbf{Step 2.} {\em Connecting $2$ to $4$}. Again we may assume by Lemma~\ref{lem:connected} that
 $\Gamma\setminus \left[3\right] $  is connected.
If no proper switch  leading to $2\sim4$ exists, then,   similarly  to Step 1, we see that $4 \sim 5$.
{\w Also,} we may assume that  $3\nsim4$, because otherwise $2$ has a neighbor $y$ with $y\nsim3$, and so $\sw(2,y,4,3)$ connects $2$ to $4$.
 Let $x$ be a neighbor of $2$ other than  $1$ and $3$. First suppose that $2 \nsim 5$. If $x\sim 4$, then by Remark~\ref{remark} the desired switch  exists, except in the two situations (a) and (b) of Figure~\ref{fig:lemma3.9.4}.
  For (a), we are done by $\sw(2, x, 4, 5)$. Note that (b) is not possible in view of the fact that $\Gamma\setminus \left[3\right] $ is connected.
 If $x\nsim4$, then the desired switch  exists, except in the situation (c) of Figure~\ref{fig:lemma3.9.4} for which $\sw(2, x, 5, y)$ implies $2\sim5$. So we are left with the case that $2 \sim 5$. If $x\nsim 5$, then $\sw(2, x, 4, 5)$. Otherwise, $1,2,4,x$ are all the neighbors of $5$.
 Let $y\neq 1, 5, x$  be the fourth neighbor of $4$.  Then $\sw(2, 5, 4, y)$.

\begin{figure}[h!]
\centering
\subfloat[]{\begin{tikzpicture}[scale=.9]
	\vertex[fill] (1) at (.6,-.4) [label=left:\scriptsize{$1$}] {};
	\vertex[fill] (2) at (1.5,.2) [label=above:\scriptsize{$2 \  \   $}] {};
	\vertex[fill] (3) at (1.5,-1) [label=below:\scriptsize{$3  $}] {};
	\vertex[fill] (4) at (3.3,.2) [label=above:\scriptsize{$\  \  4$}] {};
	\vertex[fill] (5) at (3.3,-1) [label=below:\scriptsize{$5$}] {};
	\vertex[fill] (x) at (2.4,.4) [label=above:\scriptsize{$x$}] {};
	\vertex[fill] (y) at (1.8,.8)[] {};
	\vertex[fill] (z) at (3.1,.8) [] {};
	\path
		(1) edge (2)
		(1) edge (3)
		(1) edge (4)
		(1) edge (5)
	    (2) edge (x)
		(2) edge (y)
	    (x) edge (y)
		(x) edge (z)
		(x) edge (4)
		(4) edge (5)
	    (5) edge (y)
	    (2) edge (3)
	    (4) edge (z)
	    (z) edge (y)
	    ;
\end{tikzpicture}\label{fig:lemma3.9.4a}}
 \quad \quad \quad
\subfloat[]{\begin{tikzpicture}[scale=.9]
	\vertex[fill] (1) at (.6,-.4) [label=left:\scriptsize{$1$}] {};
	\vertex[fill] (2) at (1.5,.2) [label=above:\scriptsize{$2$}] {};
	\vertex[fill] (3) at (1.5,-1) [label=below:\scriptsize{$3  $}] {};
	\vertex[fill] (4) at (3.3,.1) [label=above:\scriptsize{$4$}] {};
	\vertex[fill] (5) at (3.3,-.9) [label=below:\scriptsize{$5$}] {};
	\vertex[fill] (x) at (2.3,.4) [label=below:\scriptsize{$x$}] {};
	\vertex[fill] (z) at (2.3,.9) [] {};
	\path
		(1) edge (2)
		(1) edge (3)
		(1) edge (4)
		(1) edge (5)
	    (5) edge (z)
	    (2) edge (3)
	    (2) edge (x)
		(2) edge (z)
	    (x) edge (z)
		(x) edge (5)
		(x) edge (4)
		 (4) edge (z)
		(4) edge (5);
\end{tikzpicture}\label{fig:lemma3.9.4b}}
\quad \quad \quad
\subfloat[]
{\begin{tikzpicture}[scale=.9]
	\vertex[fill] (1) at (.6,-.4) [label=left:\scriptsize{$1$}] {};
	\vertex[fill] (2) at (1.5,.2) [label=above:\scriptsize{$2$}] {};
	\vertex[fill] (3) at (1.5,-1) [label=below:\scriptsize{$3  $}] {};
	\vertex[fill] (4) at (3.3,.1) [label=above:\scriptsize{$4$}] {};
	\vertex[fill] (5) at (3.3,-.9) [label=below:\scriptsize{$5$}] {};
	\vertex[fill] (x) at (2,.4) [label=below:\scriptsize{$x$}] {};
	\vertex[fill] (y) at (2.1,.9) [label=above:\scriptsize{$y$}] {};
    \vertex[fill] (u) at (2.9,.4) [] {};
	\vertex[fill] (v) at (2.8,.9) [] {};
	\path
		(1) edge (2)
		(1) edge (3)
		(1) edge (4)
		(1) edge (5)
	    (2) edge (3)
	    (2) edge (x)
		(2) edge (y)
	    (x) edge (u)
		(x) edge (v)
		(x) edge (5)
		 (y) edge (u)
		(y) edge (v)
		(y) edge (5)
		 (4) edge (u)
		 (4) edge (v)
		(4) edge (5);
\end{tikzpicture}\label{fig:lemma3.9.4c}}
 \caption{Some possible situations in Step 2}\label{fig:lemma3.9.4}
\end{figure}
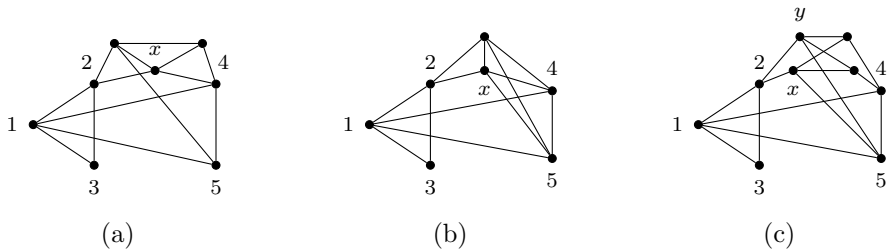

\noindent\textbf{Step 3.} {\em Connecting $2$ to $5$}. 
Let $x\neq 1, 3, 4$  be the fourth neighbor of $2$. We consider the following two cases. 
 \begin{itemize}
  \item[(i)] $x\sim 5$. Let $y$ and $z$  be the other two neighbors of $x$.
  If $y\not\sim5$ or $z\not\sim5$, then the desired switch is possible. Otherwise we are in the situation of  Figure~\ref{fig:lemma3.9.5ia}.
We first show that $3\sim 5$ or $4\sim 5$.
If $y\nsim z$,  then by examining the neighbors of $3$ and $4$, proper switches to  $3\sim 5$ or $4\sim 5$  will clearly exist. If $y\sim z$, then the desired switch  will exist, except when $3\sim 4$, $3\sim y$, and $4\sim z$ in which case $n=8$ and $\Gamma=G_8$.
  So we have that $3\sim5$ or $4\sim5$ and thus we are in either of the situations (b) or (c) of Figure~\ref{fig:lemma3.9.5i}.
(Note that if there is no edge $4x$  in (b) or $3x$ in  (c), then it is easy to find a switch that connects $2$ to $5$.)
For (b), $3$ has a neighbor $y\neq 4$ and $y\nsim x$. Then $\sw(3, y, 5, x)$. For  (c),  $4$ has a neighbor $y\neq 3$ and $y\nsim x$. Then $\sw(4, y, 5, x)$.
It turns out that both (b) and (c) lead to the subgraph (d)
of Figure~\ref{fig:lemma3.9.5i}.
If both $3$ and $4$ are adjacent to $x$,  then $n=6$ and we get $G_6$. Therefore we suppose that both $3$ and $4$ cannot  be adjacent to $x$.
Then either $3\nsim x$ or $4\nsim x$, for which we  apply $\sw(2, x, 5, 3)$
or  $\sw(2, x, 5, 4)$, respectively.

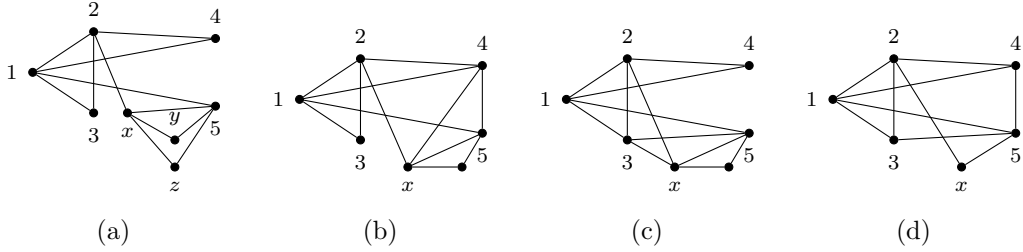
\begin{figure}[h!]
\centering
\subfloat[]{\begin{tikzpicture}[scale=.9]
	\vertex[fill] (1) at (.6,-.4) [label=left:\scriptsize{$1$}] {};
	\vertex[fill] (2) at (1.5,.2) [label=above:\scriptsize{$2$}] {};
	\vertex[fill] (3) at (1.5,-1) [label=below:\scriptsize{$3  $}] {};
	\vertex[fill] (4) at (3.3,.1) [label=above:\scriptsize{$4$}] {};
	\vertex[fill] (5) at (3.3,-.9) [label=below:\scriptsize{$5$}] {};
	\vertex[fill] (x) at (2,-1) [label=below:\scriptsize{$x$}] {};
	\vertex[fill] (y) at (2.7,-1.4) [label=above:\scriptsize{$y$}] {};
\vertex[fill] (u) at (2.7,-1.8) [label=below:\scriptsize{$z$}] {};
	\path
		(1) edge (2)
		(1) edge (3)
		(1) edge (4)
		(1) edge (5)
	    (2) edge (3)
	    (2) edge (x)
	    (2) edge (4)
		(x) edge (y)
	    (x) edge (u)
		(x) edge (5)
		(y) edge (5)
		 (5) edge (u)	;
\end{tikzpicture}\label{fig:lemma3.9.5ia}}
\quad
\subfloat[]{\begin{tikzpicture}[scale=.9]
	\vertex[fill] (1) at (.6,-.4) [label=left:\scriptsize{$1$}] {};
	\vertex[fill] (2) at (1.5,.2) [label=above:\scriptsize{$2$}] {};
	\vertex[fill] (3) at (1.5,-1) [label=below:\scriptsize{$3  $}] {};
	\vertex[fill] (4) at (3.3,.1) [label=above:\scriptsize{$4$}] {};
	\vertex[fill] (5) at (3.3,-.9) [label=below:\scriptsize{$5$}] {};
	\vertex[fill] (x) at (2.2,-1.4) [label=below:\scriptsize{$x$}] {};
	\vertex[fill] (y) at (3,-1.4) [] {};
	\path
		(1) edge (2)
		(1) edge (3)
		(1) edge (4)
		(1) edge (5)
	    (2) edge (3)
	    (2) edge (x)
	    (2) edge (4)
		(x) edge (y)
		(x) edge (5)
		(y) edge (5)
	   (4) edge (5)
	   (4) edge (x);
\end{tikzpicture}\label{fig:lemma3.9.5ib}}
\quad
\subfloat[]{\begin{tikzpicture}[scale=.9]
	\vertex[fill] (1) at (.6,-.4) [label=left:\scriptsize{$1$}] {};
	\vertex[fill] (2) at (1.5,.2) [label=above:\scriptsize{$2$}] {};
	\vertex[fill] (3) at (1.5,-1) [label=below:\scriptsize{$3  $}] {};
	\vertex[fill] (4) at (3.3,.1) [label=above:\scriptsize{$4$}] {};
	\vertex[fill] (5) at (3.3,-.9) [label=below:\scriptsize{$5$}] {};
	\vertex[fill] (x) at (2.2,-1.4) [label=below:\scriptsize{$x$}] {};
	\vertex[fill] (y) at (3,-1.4) [] {};
	\path
		(1) edge (2)
		(1) edge (3)
		(1) edge (4)
		(1) edge (5)
	    (2) edge (3)
	    (2) edge (x)
	    (2) edge (4)
		(x) edge (y)
		(x) edge (5)
		(y) edge (5)
	   (3) edge (5)
	   (3) edge (x);
\end{tikzpicture}\label{fig:lemma3.9.5ic}}
\quad
\subfloat[]{\begin{tikzpicture}[scale=.9]
	\vertex[fill] (1) at (.6,-.4) [label=left:\scriptsize{$1$}] {};
	\vertex[fill] (2) at (1.5,.2) [label=above:\scriptsize{$2$}] {};
	\vertex[fill] (3) at (1.5,-1) [label=below:\scriptsize{$3  $}] {};
	\vertex[fill] (4) at (3.3,.1) [label=above:\scriptsize{$4$}] {};
	\vertex[fill] (5) at (3.3,-.9) [label=below:\scriptsize{$5$}] {};
	\vertex[fill] (x) at (2.5,-1.4) [label=below:\scriptsize{$x$}] {};
	\path
		(1) edge (2)
		(1) edge (3)
		(1) edge (4)
		(1) edge (5)
	    (2) edge (3)
	    (2) edge (x)
	    (2) edge (4)
		(x) edge (5)
	   (4) edge (5)
	   (3) edge (5);
\end{tikzpicture}\label{fig:lemma3.9.5id}}
 \caption{Some possible situations in Step 3, Case (i)}\label{fig:lemma3.9.5i}
\end{figure}

\item[(ii)] $x\nsim5$.
If the remaining neighbors of $x$ and $5$ are not the same, then the desired switch is available.
Otherwise,  similarly to (i), we have $3\sim5$ or $4\sim5$. So in view of Remark~\ref{remark}, we are in either of the situations (a) or (b) of Figure~\ref{fig:lemma3.9.5ii}.
For (a), first let $y\nsim z$. If $x< z$, then $\sw(x,y,5,z)$, and if $z<x$, then $\sw(z,5,4,x)$  connects  $x$ to $5$, which reduces the graph to Case (i). Now let $y\sim z$. If $3\sim y$ and $3\sim z$, then $n=8$, and by $\sw(3,y,5,4)$ and then $\sw(3,5,4,x)$ we transfer $\Gamma$ to $G_8$. If $3\nsim y$ or $3\nsim z$, then there is a neighbor $w$ of $3$ such that either $w\nsim y$ and $w\neq y$, and then $\sw(3,w,5,y)$, or $w\nsim z$ and $w\neq z$. Then $\sw(3,w,5,z)$ connects $3$ to $5$.
We do the same for (b) to connect $4$ to $5$. So both (a) and (b) lead to the subgraph (c) of Figure~\ref{fig:lemma3.9.5ii}.
Now $\sw(3, x,4, 5)$  connects $x$ to $5$, which reduces the graph to Case (i).
\end{itemize}

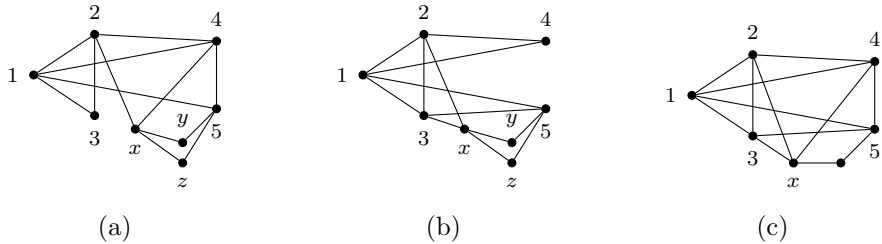
\begin{figure}[h!]
\centering
\subfloat[]{\begin{tikzpicture}[scale=.9]
	\vertex[fill] (1) at (.6,-.4) [label=left:\scriptsize{$1$}] {};
	\vertex[fill] (2) at (1.5,.2) [label=above:\scriptsize{$2$}] {};
	\vertex[fill] (3) at (1.5,-1) [label=below:\scriptsize{$3  $}] {};
	\vertex[fill] (4) at (3.3,.1) [label=above:\scriptsize{$4$}] {};
	\vertex[fill] (5) at (3.3,-.9) [label=below:\scriptsize{$5$}] {};
	\vertex[fill] (x) at (2.1,-1.2) [label=below:\scriptsize{$x$}] {};
\vertex[fill] (y) at (2.8,-1.4) [label=above:\scriptsize{$y$}] {};
	\vertex[fill] (z) at (2.8,-1.7) [label=below:\scriptsize{$z$}] {};
	\path
		(1) edge (2)
		(1) edge (3)
		(1) edge (4)
		(1) edge (5)
	    (2) edge (3)
	    (2) edge (x)
	    (2) edge (4)
		(x) edge (y)
		(y) edge (5)
	   (z) edge (5)
	   (z) edge (x)
	   (4) edge (5)
	   (4) edge (x);
\end{tikzpicture}\label{fig:lemma3.9.5iia}}
\quad \quad \quad
\subfloat[]{\begin{tikzpicture}[scale=.9]
		\vertex[fill] (1) at (.6,-.4) [label=left:\scriptsize{$1$}] {};
	\vertex[fill] (2) at (1.5,.2) [label=above:\scriptsize{$2$}] {};
	\vertex[fill] (3) at (1.5,-1) [label=below:\scriptsize{$3  $}] {};
	\vertex[fill] (4) at (3.3,.1) [label=above:\scriptsize{$4$}] {};
	\vertex[fill] (5) at (3.3,-.9) [label=below:\scriptsize{$5$}] {};
	\vertex[fill] (x) at (2.1,-1.2) [label=below:\scriptsize{$x$}] {};
\vertex[fill] (y) at (2.8,-1.4) [label=above:\scriptsize{$y$}] {};
	\vertex[fill] (z) at (2.8,-1.7) [label=below:\scriptsize{$z$}] {};
	\path
		(1) edge (2)
		(1) edge (3)
		(1) edge (4)
		(1) edge (5)
	    (2) edge (3)
	    (2) edge (x)
	    (2) edge (4)
		(x) edge (y)		
		(y) edge (5)
	   (z) edge (5)
	   (z) edge (x)
	   (3) edge (5)
	   (3) edge (x);
\end{tikzpicture}\label{fig:lemma3.9.5iib}}
\quad \quad \quad
\subfloat[]{\begin{tikzpicture}[scale=.9]
	\vertex[fill] (1) at (.6,-.4) [label=left:\scriptsize{$1$}] {};
	\vertex[fill] (2) at (1.5,.2) [label=above:\scriptsize{$2$}] {};
	\vertex[fill] (3) at (1.5,-1) [label=below:\scriptsize{$3  $}] {};
	\vertex[fill] (4) at (3.3,.1) [label=above:\scriptsize{$4$}] {};
	\vertex[fill] (5) at (3.3,-.9) [label=below:\scriptsize{$5$}] {};
	\vertex[fill] (x) at (2.1,-1.4) [label=below:\scriptsize{$x$}] {};
	\vertex[fill] (y) at (2.8,-1.4) [] {};
	\path
		(1) edge (2)
		(1) edge (3)
		(1) edge (4)
		(1) edge (5)
	    (2) edge (3)
	    (2) edge (x)
	    (2) edge (4)
		(x) edge (y)
		(3) edge (5)
		(3) edge (x)
		(y) edge (5)
	   (4) edge (5)
	   (4) edge (x);
\end{tikzpicture}\label{fig:lemma3.9.5iic}}
 \caption{Some possible situations in Step 3, Case (ii)}\label{fig:lemma3.9.5ii}
\end{figure}

\noindent  \textbf{Step 4.} {\em Connecting $3$ to $4$}.
Let $x\neq 1, 2$  be a neighbor of $3$. If $4\sim 5$, we may choose $x$ so that $x\nsim 5$, and then $\sw(3, x,4, 5)$.
 So assume that $4\nsim5$.  From Remark~\ref{remark},  it is seen that the desired switch is available, except in the situations of Figure~\ref{fig:lemma3.9.6}.
 For each of them, we first show that $4\sim 5$. Then, with this edge, the desired switches can be found. 
In the one in Figure~\ref{fig:lemma3.9.6a}, if $5$   is  adjacent to both  $y$ and $z$, then $n=8$ and $\Gamma=G_{8^\prime}$. Otherwise $5$ has  a neighbor $w\neq y,z$ and $w\nsim x$. We first $\sw(4, x,5, w)$ and then $\sw(3, x,4, 5)$.
The other two situations are similar. Note that in Figure~\ref{fig:lemma3.9.6b}, if $5\sim x$ and $5\sim y$, then $n=7$ and $\Gamma=G_7$; and in Figure~\ref{fig:lemma3.9.6c}, if $z\sim w$, $5\sim x$, and $5\sim y$, then $n=9$. By $\sw(5, x,3, y)$ and then $\sw(3, 5,4, z)$, we can thus transfer $\Gamma$ to $G_9$.

\begin{figure}[h!]
\centering
\subfloat[]{\begin{tikzpicture}[scale=.9]
	\vertex[fill] (1) at (.6,-.4) [label=left:\scriptsize{$1$}] {};
	\vertex[fill] (2) at (1.5,.2) [label=above:\scriptsize{$2$}] {};
	\vertex[fill] (3) at (1.5,-1) [label=below:\scriptsize{$3  $}] {};
	\vertex[fill] (4) at (3.3,.1) [label=above:\scriptsize{$4$}] {};
	\vertex[fill] (5) at (3.3,-.9) [label=below:\scriptsize{$5$}] {};
	\vertex[fill] (x) at (2.3,-1.2) [label=above:\scriptsize{$x$}] {};
	\vertex[fill] (y) at (1.9,-1.6) [label=below:\scriptsize{$y$}] {};
\vertex[fill] (z) at (2.8,-1.6) [label=below:\scriptsize{$z$}] {};
	\path
		(1) edge (2)
		(1) edge (3)
		(1) edge (4)
		(1) edge (5)
	    (2) edge (3)
	    (2) edge (5)
	    (2) edge (4)
		(3) edge (y)
		(x) edge (3)
	   (4) edge (z)
	   (4) edge (x)
	   (x) edge (z)
	   (x) edge (y)
	   (y) edge (z) ;
\end{tikzpicture}\label{fig:lemma3.9.6a}}
\quad \quad \quad
\subfloat[]{\begin{tikzpicture}[scale=.9]
	\vertex[fill] (1) at (.6,-.4) [label=left:\scriptsize{$1$}] {};
	\vertex[fill] (2) at (1.5,.2) [label=above:\scriptsize{$2$}] {};
	\vertex[fill] (3) at (1.5,-1) [label=below:\scriptsize{$3  $}] {};
	\vertex[fill] (4) at (3.3,.1) [label=above:\scriptsize{$4$}] {};
	\vertex[fill] (5) at (3.3,-.9) [label=below:\scriptsize{$5$}] {};
	\vertex[fill] (x) at (2.3,-1.2) [label=above:\scriptsize{$x$}] {};
	\vertex[fill] (y) at (2.3,-1.7) [label=below:\scriptsize{$y$}] {};
	\path
		(1) edge (2)
		(1) edge (3)
		(1) edge (4)
		(1) edge (5)
	    (2) edge (3)
	    (2) edge (5)
	    (2) edge (4)
		(3) edge (x)
		(4) edge (y)
	    (4) edge (x)
	    (x) edge (y)
	    (3) edge (y)   ;
\end{tikzpicture}\label{fig:lemma3.9.6b}}
\quad \quad \quad
\subfloat[]{\begin{tikzpicture}[scale=.9]
	\vertex[fill] (1) at (.6,-.4) [label=left:\scriptsize{$1$}] {};
	\vertex[fill] (2) at (1.5,.2) [label=above:\scriptsize{$2$}] {};
	\vertex[fill] (3) at (1.5,-1) [label=below:\scriptsize{$3  $}] {};
	\vertex[fill] (4) at (3.3,.1) [label=above:\scriptsize{$4$}] {};
	\vertex[fill] (5) at (3.3,-.9) [label=below:\scriptsize{$5$}] {};
	\vertex[fill] (x) at (2.1,-1.2) [label=above:\scriptsize{$x$}] {};
	\vertex[fill] (y) at (1.9,-1.6) [label=below:\scriptsize{$y$}] {};
    \vertex[fill] (z) at (2.6,-1.2) [label=above:\scriptsize{$z$}] {};
	\vertex[fill] (w) at (2.8,-1.6) [label=below:\scriptsize{$w$}] {};
	\path
		(1) edge (2)
		(1) edge (3)
		(1) edge (4)
		(1) edge (5)
	    (2) edge (3)
	    (2) edge (4)
	    (2) edge (5)
		(3) edge (x)
		(3) edge (y)
		(4) edge (z)
	   (4) edge (w)
	   (x) edge (w)
	   (x) edge (z)
       (y) edge (w)
       (y) edge (z);
\end{tikzpicture}\label{fig:lemma3.9.6c}}
\caption{Some possible situations in Step 4}\label{fig:lemma3.9.6}
\end{figure}
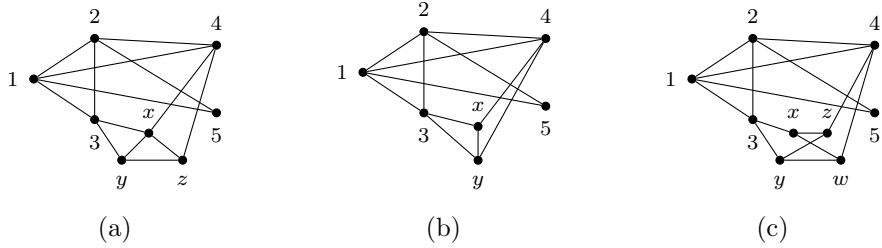

\noindent\textbf{Step 5.} So far we obtain on the first five vertices either the subgraph (a) or (b) of Figure~\ref{fig:lemma3.9.7}.
If (a) is the case, letting $x$ to be the fourth neighbor of $3$, then $\sw(3,x,5,4)$  connects $3$ to $5$, and so we obtain $D'_4$.
  Now, assume that (b) is the case. If we can find a switch to connect $3$ to $5$,  we again reach  $D'_4$.
Otherwise, it is easily seen that by proper switching we can  connect $3$ to $6$ as shown in Figure~\ref{fig:lemma3.9.7c}.
Furthermore, if we can find a suitable switch to connect $4$ to $6$, we reach the graph $H_1$ of Figure~\ref{fig:H1H2}.
Otherwise, it is easily seen  by switching that $4\sim 7$  and that we can reach Figure~\ref{fig:lemma3.9.7d}.
 If there is no switch to connect $4$ to $6$, then we can find a proper switch to   connect $5$ to $6$, except when all the three vertices $5$, $6$, and $7$ are adjacent to $8$ and $9$ and $8\sim 9$, in which case
  $n=9$ and $\Gamma=G_9$. Now, from (d), if $5\nsim 7$, then $\sw(4,7,6,5)$  connects $4$ to $6$ and thus $H_1$ is obtained. Otherwise
  $5\sim 7$   and we reach the graph $H_2$ of Figure~\ref{fig:H1H2}.

\begin{figure}[h!]
\centering
\subfloat[]
{\begin{tikzpicture}
	\vertex[fill] (r) at (0,0) [label=left:\scriptsize{$1$}] {};
	\vertex[fill] (r1) at (.5,.5) [label=above:\scriptsize{$2$}] {};
	\vertex[fill] (r2) at (.5,-.5) [label=below:\scriptsize{$3$}] {};
	\vertex[fill] (r3) at (1.5,.5) [label=above:\scriptsize{$4$}] {};
	\vertex[fill] (r4) at (1.5,-.5) [label=below:\scriptsize{$5$}] {};
   	\path
		(r) edge (r1)
		(r) edge (r2)
	   (r) edge (r3)
	   (r) edge (r4)
		(r1) edge (r2)
		(r1) edge (r3)
        (r1) edge (r4)
		(r2) edge (r3)
	    (r3) edge (r4);		
\end{tikzpicture}\label{fig:lemma3.9.7a}}
\quad \quad \quad
\subfloat[]{\begin{tikzpicture}
	\vertex[fill] (r) at (0,0) [label=left:\scriptsize{$1$}] {};
	\vertex[fill] (r1) at (.5,.5) [label=above:\scriptsize{$2$}] {};
	\vertex[fill] (r2) at (.5,-.5) [label=below:\scriptsize{$3$}] {};
	\vertex[fill] (r3) at (1.5,.5) [label=above:\scriptsize{$4$}] {};
	\vertex[fill] (r4) at (1.5,-.5) [label=below:\scriptsize{$5$}] {};
	\path
		(r) edge (r1)
		(r) edge (r2)
	   (r) edge (r3)
	   (r) edge (r4)
		(r1) edge (r2)
		(r1) edge (r3)
        (r1) edge (r4)
		(r2) edge (r3);
\end{tikzpicture}\label{fig:lemma3.9.7b}}
\quad \quad \quad
\subfloat[]{\begin{tikzpicture}
	\vertex[fill] (r1) at (0,0) [label=below:\scriptsize{$1$}] {};
	\vertex[fill] (r2) at (0,.8) [label=above:\scriptsize{$2$}] {};
	\vertex[fill] (r3) at (.8,1) [label=above:\scriptsize{$3$}] {};
	\vertex[fill] (r4) at (.8,-.2) [label=below:\scriptsize{$4$}] {};
    \vertex[fill] (r5) at (1.3,.4) [label=right:\scriptsize{$5$}] {};
    \vertex[fill] (r7) at (2,.8) [label=above:\scriptsize{$6$}] {};
	\path
		(r5) edge (r2)
		(r1) edge (r2)
	    (r1) edge (r5)
		(r1) edge (r3)
        (r1) edge (r4)
		(r2) edge (r3)
	    (r2) edge (r4)
	    (r4) edge (r3)
	    (r3) edge (r7) ;
\end{tikzpicture}\label{fig:lemma3.9.7c}}
\quad \quad \quad
\subfloat[]{\begin{tikzpicture}
	\vertex[fill] (r1) at (0,0) [label=below:\scriptsize{$1$}] {};
	\vertex[fill] (r2) at (0,.8) [label=above:\scriptsize{$2$}] {};
	\vertex[fill] (r3) at (.8,1) [label=above:\scriptsize{$3$}] {};
	\vertex[fill] (r4) at (.8,-.2) [label=below:\scriptsize{$4$}] {};
    \vertex[fill] (r5) at (1.3,.4) [label=right:\scriptsize{$5$}] {};
    \vertex[fill] (r7) at (2,.8) [label=above:\scriptsize{$6$}] {};
    \vertex[fill] (r6) at (2,0) [label=below:\scriptsize{$7$}] {};
	\path
		(r5) edge (r2)
		(r1) edge (r2)
	    (r1) edge (r5)
		(r1) edge (r3)
        (r1) edge (r4)
		(r2) edge (r3)
	    (r2) edge (r4)
	    (r4) edge (r3)
	    (r3) edge (r7)
	     (r4) edge (r6) ;
\end{tikzpicture}\label{fig:lemma3.9.7d}}
\caption{Some possible situations in Step 5}\label{fig:lemma3.9.7}
\end{figure}
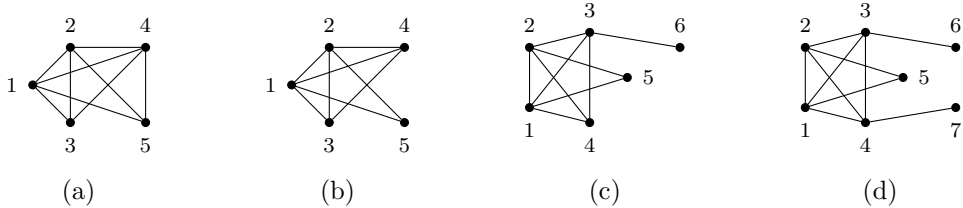

\noindent\textbf{Step 6.} So far we have obtained  one of the subgraphs $D'_4$, or $H_1,H_2$ of Figure~\ref{fig:H1H2}, unless $n\le9$, in which case we obtained the graphs $G_i$ of Figure~\ref{fig:G5-G9}. We show that
 continuous reconnecting, starting from $H_1$ and $H_2$, leads to $D_1$, $D_2$, $D'_3$, or $D'_4$.

First, consider $H_1$. We have either $5\sim 6$ or $5\nsim 6$. Let $5\sim 6$. It is easy to find a switch that  connects $5$ to $7$.
If further $6\sim 7$, then we have the block $D_1$. If $6\nsim 7$,
it is easily seen, by switching, that $6\sim 8$.  Then  $\sw(3, 6, 5, 7)$ reduces the subgraph on $\{1,\ldots,5\}$ to $D'_4$.
Now, let $5\nsim 6$. By switching it is seen that $5\sim 7$ and $5\sim 8$.
 Thus we are in the situation of Lemma~\ref{lem:r+1,r+2VerCut}  for $r=4$, which leads to  either of the  subgraphs (a) or (b) of Figure~\ref{fig:lemma3.9.8}.
Now, $\sw(3, 6, 5, x)$ reduces (b) to the subgraph of Figure~\ref{fig:lemma3.9.8c}, which is the unique graph of Theorem~\ref{thm:quartic} on 10 vertices.
For (a), first let $6\sim 8$. If further $7\sim 8$, then we get $D'_3$, otherwise $\sw(5, 8, 6, 7)$ and then $\sw(3, 6, 5, 7)$ reduce the subgraph on $\{1,\ldots,5\}$ to $D'_4$. Now, let  $6\nsim 8$.  Then $\sw(3, 6, 5, 8)$ reduces the subgraph on $\{1,\ldots,5\}$ to $D'_4$.

\begin{figure}[h!]
\centering
\subfloat[]{\begin{tikzpicture} [scale=.9]
	\vertex[fill] (r1) at (0,1) [label=above:\scriptsize{$1$}] {};
	\vertex[fill] (r2) at (.8,1.2) [label=above:\scriptsize{$2$}] {};
	\vertex[fill] (r3) at (0,0) [label=below:\scriptsize{$3$}] {};
	\vertex[fill] (r4) at (.8,-.2) [label=below:\scriptsize{$4$}] {};
    \vertex[fill] (r5) at (1.6,1) [label=above:\scriptsize{$5$}] {};
    \vertex[fill] (r6) at (1.6,0) [label=below:\scriptsize{$6$}] {};
    \vertex[fill] (r7) at (2.6,1) [label=above:\scriptsize{$7$}] {};
    \vertex[fill] (r8) at (2.6,0) [label=below:\scriptsize{$8$}] {};
	\path
		(r5) edge (r2)
		(r1) edge (r2)
	    (r1) edge (r5)
		(r1) edge (r3)
        (r1) edge (r4)
		(r2) edge (r3)
	    (r2) edge (r4)
	    (r3) edge (r6)
	    (r4) edge (r3)
	    (r4) edge (r6)
	    (r5) edge (r8)
	    (r5) edge (r7)
	    (r6) edge (r7) ;
	  \end{tikzpicture}\label{fig:lemma3.9.8a}}
\quad   \quad    \quad
\subfloat[]{\begin{tikzpicture}[scale=.9]
	\vertex[fill] (r1) at (0,1) [label=above:\scriptsize{$1$}] {};
	\vertex[fill] (r2) at (.8,1.2) [label=above:\scriptsize{$2$}] {};
	\vertex[fill] (r3) at (0,0) [label=below:\scriptsize{$3$}] {};
	\vertex[fill] (r4) at (.8,-.2) [label=below:\scriptsize{$4$}] {};
    \vertex[fill] (r5) at (1.6,1) [label=above:\scriptsize{$5$}] {};
    \vertex[fill] (r6) at (1.6,0) [label=below:\scriptsize{$6$}] {};
    \vertex[fill] (r7) at (2.4,1.2) [] {};
    \vertex[fill] (r8) at (2.4,-.2) [] {};
    \vertex[fill] (r9) at (3.2,1) [label=above:\scriptsize{$x$}] {};
    \vertex[fill] (r10) at (3.2,0) [] {};
	\path
		(r5) edge (r2)
		(r1) edge (r2)
	    (r1) edge (r5)
		(r1) edge (r3)
        (r1) edge (r4)
		(r2) edge (r3)
	    (r2) edge (r4)
	    (r3) edge (r6)
	    (r4) edge (r3)
	    (r4) edge (r6)
	    (r5) edge (r9)
	    (r5) edge (r7)
	    (r6) edge (r8)
	    (r6) edge (r10)
	    (r7) edge (r8)
	    (r7) edge (r9)
	    (r7) edge (r10)
	    (r8) edge (r9)
	    (r8) edge (r10)
	    (r9) edge (r10);
\end{tikzpicture}\label{fig:lemma3.9.8b}}
\quad   \quad   \quad
\subfloat[]{\begin{tikzpicture}[scale=.9]
	\vertex[fill] (r) at (0,0) [] {};
	\vertex[fill] (r1) at (.5,.5) [] {};
	\vertex[fill] (r2) at (.5,-.5) [] {};
	\vertex[fill] (r3) at (1.5,.5) [] {};
	\vertex[fill] (r4) at (1.5,-.5) [] {};
    \vertex[fill] (r5) at (2.5,.5) [] {};
	\vertex[fill] (r6) at (2.5,-.5) [] {};
    \vertex[fill] (r7) at (3.5,.5) [] {};
	\vertex[fill] (r8) at (3.5,-.5) [] {};
    \vertex[fill] (r9) at (4,0) []{};
    \tikzstyle{vertex}=[circle, draw, inner sep=0pt, minimum size=0pt]
    \vertex[fill] () at (3.5,-1.2) [] {};
	\path
		(r) edge (r1)
		(r) edge (r2)
	    (r) edge (r3)
	    (r) edge (r4)
		(r1) edge (r2)
		(r1) edge (r3)
        (r1) edge (r4)
		(r2) edge (r3)
	    (r2) edge (r4)
	    (r4) edge (r6)
	    (r5) edge (r3)
	    (r5) edge (r7)
	    (r5) edge (r8)
	    (r6) edge (r7)
	     (r6) edge (r8)
	     (r7) edge (r8)
	     (r9) edge (r8)
	     (r9) edge (r7)
	     (r9) edge (r6)
	     (r9) edge (r5);
\end{tikzpicture}\label{fig:lemma3.9.8c}}
\caption{Some possible situations in Step 6 }\label{fig:lemma3.9.8}
\end{figure}
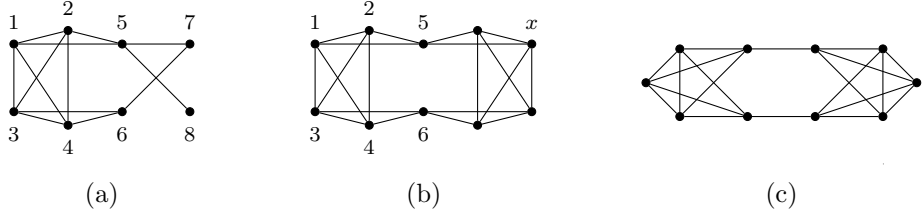

Secondly, consider $H_2$.  We have either $6\sim 7$ or $6\nsim 7$. First let $6\sim 7$. It is easy to find a switch that  connects $6$ to $8$.
If further $7\sim 8$, then we obtain the block $D_2$. If $7\nsim 8$, then  $\sw(4, 7, 6, 8)$ reduces the graph to $D_1$.
Now, let $6\nsim 7$.  Then  $\sw(3, 6, 5, 7)$ reduces the subgraph on $\{1,\ldots,5\}$ to $D'_4$.

\end{proof}

\subsubsection{General Steps}

In this section, we continue reconnecting  $\Gamma$  by proper switchings to construct the middle and end blocks with the structure described in Theorem~\ref{thm:quartic}.

\begin{lemma}\label{lem:begin2}
	Let $H$ be a graph with vertices $r,\ldots,r+\ell-1$ where all the vertices have degree $4$ except for $r$, which has degree $2$.
If $\ell\ge10$,  then, by proper switchings, we can  transform the induced subgraph on the first four or five vertices  into one of the subgraphs given in Figure~$\ref{fig:cut}$. If $\ell\le9$, then $H$ can be transformed into either of   $\tilde D_1,\tilde D_2,\tilde D_3$ or $\tilde D_4$.
\end{lemma}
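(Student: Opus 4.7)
The plan is to follow the same strategy as in the proof of Lemma~\ref{lem:begin1}, adapting each step to the setting where the initial vertex $r$ has degree $2$ (playing the role of a cut vertex) instead of degree $4$. Since $r$ has only two neighbors in $H$, the first reduction is easier: I would first use proper switches to arrange that the two neighbors of $r$ in $H$ are exactly $r+1$ and $r+2$. This is analogous to Step~1 of Lemma~\ref{lem:begin1}, and whenever $r$ is not adjacent to some $x\in\{r+1,r+2\}$, a proper switch exists by Remark~\ref{remark} unless the would-be obstructing configuration forces a degree conflict or a contradiction with the connectedness of $H$ (using Lemma~\ref{lem:connected} when needed).

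Next, in the induced subgraph $H\setminus\{r\}$, the vertices $r+1$ and $r+2$ each have degree $3$ and all other vertices have degree $4$. This is exactly the hypothesis of Lemma~\ref{lem:r+1--r+2} with $m=\ell-1$. Applying that lemma, either we can make $r+1\sim r+2$ by proper switchings, or $\ell-1\in\{5,8\}$ and $H\setminus\{r\}$ becomes $\tilde D_4'$ or $\tilde D_3'$ respectively. In these two exceptional subcases, adjoining $r$ (which is adjacent only to $r+1,r+2$) to $H\setminus\{r\}$ yields precisely $\tilde D_4$ or $\tilde D_3$, accounting for two of the four possibilities in the $\ell\le 9$ conclusion.

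Assuming we have reached the configuration in which $\{r,r+1,r+2\}$ forms a triangle with $r$ of degree $2$, I would then continue to connect $r+1$ to $r+3$, then $r+2$ to $r+3$, and so on, mirroring Steps~2--5 of the proof of Lemma~\ref{lem:begin1}. At each stage the candidate switches either exist directly (via Remark~\ref{remark}), or the obstructing configurations can be resolved by an auxiliary switch; when no auxiliary resolution is available the obstruction is identified either as one of $\tilde D_1$ or $\tilde D_2$ (the remaining small cases with $\ell\le 9$), or as the initial segment matching one of the subgraphs prescribed in Figure~\ref{fig:cut}. Lemma~\ref{lem:connected} is invoked whenever a switch threatens to disconnect $H$, and every switch used is proper so the algebraic connectivity never increases.

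The main obstacle, as in Lemma~\ref{lem:begin1}, is the bookkeeping of exceptional configurations: each time a proper switch fails, Remark~\ref{remark} identifies a specific forced local subgraph, and one must verify either that $\ell\le 9$ collapses $H$ into one of $\tilde D_1,\tilde D_2,\tilde D_3,\tilde D_4$, or that a secondary switch (or short chain of switches) resolves the obstruction while preserving the work already completed on $\{r,\ldots,r+i\}$. Since the hypotheses on $H$ differ from those in Lemma~\ref{lem:begin1} only in the degree of the leftmost vertex (two instead of four), the bulk of the case analysis transfers almost verbatim under the natural mirror correspondence, with the noticeable simplification that $r$ contributes fewer neighbors and hence fewer obstructing subcases at the very first steps.
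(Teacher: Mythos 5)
Your opening moves match the paper exactly: you force $r\sim r+1,r+2$ by the degree-five obstruction argument, pass to $H-r$ (where $r+1,r+2$ have degree $3$ and everything else degree $4$), and invoke Lemma~\ref{lem:r+1--r+2} with $m=\ell-1$, correctly reading off $\tilde D_4$ and $\tilde D_3$ from the exceptional cases $m=5,8$ (i.e.\ $\ell=6,9$). Up to that point the proposal is sound and coincides with the paper's argument.

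From there on, however, the proposal asserts what the lemma claims rather than proving it. The paper's next step is not a rerun of Steps 2--5 of Lemma~\ref{lem:begin1}: it first forces $r+1\sim r+3$ and $r+1\sim r+4$ (by the same degree argument) and then applies Lemma~\ref{lem:r+1,r+2VerCut} to $H-r$; the exceptional outcome $\bar D_1$ of that lemma is exactly where $\tilde D_1$ (the case $\ell=7$) comes from, and otherwise one obtains $r+2\sim r+3$. The block $\tilde D_2$ likewise does not arise as a generic ``unresolvable obstruction'' but as one specific $8$-vertex configuration encountered while trying to force $r+3\sim r+4$ in the subcase $r+2\nsim r+4$. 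Finally, reaching the four subgraphs of Figure~\ref{fig:cut} requires a further case split on the adjacencies among $r+2,r+3,r+4,r+5$, with its own exceptional configurations resolved by explicit auxiliary switches. Writing that every failed switch ``is identified either as one of $\tilde D_1$ or $\tilde D_2$ \ldots\ or as the initial segment matching one of the subgraphs prescribed in Figure~\ref{fig:cut}'' is precisely the statement to be proved, so the second half of the argument is missing rather than merely compressed; you need to identify the second key tool (Lemma~\ref{lem:r+1,r+2VerCut}) and carry out the terminal case analysis explicitly.
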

\begin{figure}[h!]
\centering
\subfloat[]{\begin{tikzpicture}[scale=.9]
   	\vertex[fill] (r) at (0,0) [label=left:\scriptsize{$r$}] {};
	\vertex[fill] (r1) at (.5,.6) [label=above:\scriptsize{$r+1\  \  $}] {};
	\vertex[fill] (r2) at (.5,-.6) [label=below:\scriptsize{$r+2\  \ $}] {};
	\vertex[fill] (r3) at (1.7,.6) [label=above:\scriptsize{$ \  \   r+3$}] {};
	\vertex[fill] (r4) at (1.7,-.6) [label=below:\scriptsize{$  \  \    r+4$}] {};
	\path
		(r) edge (r1)
		(r) edge (r2)
		(r1) edge (r2)
		(r1) edge (r3)
        (r1) edge (r4)
		(r2) edge (r3)
	    (r2) edge (r4);
	\end{tikzpicture}\label{fig:cuta}}
\quad \quad
\subfloat[]{\begin{tikzpicture}[scale=.9]
	\vertex[fill] (r) at (0,0) [label=left:\scriptsize{$r$}] {};
	\vertex[fill] (r1) at (.5,.6) [label=above:\scriptsize{$r+1 \  \ $}] {};
	\vertex[fill] (r2) at (.5,-.6) [label=below:\scriptsize{$r+2 \  \ $}] {};
	\vertex[fill] (r3) at (1.7,.6) [label=above:\scriptsize{$ \  \  r+3$}] {};
	\vertex[fill] (r4) at (1.7,-.6) [label=below:\scriptsize{$ \  \  r+4$}] {};
	\path
		(r) edge (r1)
		(r) edge (r2)
		(r1) edge (r2)
		(r1) edge (r3)
        (r1) edge (r4)
		(r2) edge (r3)
	    (r2) edge (r4)
	    (r3) edge (r4);
	\end{tikzpicture}\label{fig:cutb}}
\quad \quad
\subfloat[]{\begin{tikzpicture}[scale=.9]
	\vertex[fill] (r) at (1,0) [label=left:\scriptsize{$r$}] {};
	\vertex[fill] (r1) at (1.5,.6) [label=above:\scriptsize{$r+1 \  \  $}] {};
	\vertex[fill] (r2) at (1.5,-.6) [label=below:\scriptsize{$r+2 \  \ $}] {};
	\vertex[fill] (r3) at (2.48,0) [label=above:\scriptsize{$r+3$}] {};
	\vertex[fill] (r4) at (3.4,.6) [label=above:\scriptsize{$  \   \   r+4$}] {};
    \vertex[fill] (r5) at (3.4,-.6) [label=below:\scriptsize{$  \   \  r+5$}] {};
\path
		(r) edge (r1)
		(r) edge (r2)
		(r1) edge (r2)
		(r1) edge (r3)
        (r1) edge (r4)
		(r2) edge (r3)
	    (r2) edge (r5)
	    (r3) edge (r4)
	    (r3) edge (r5)
	    (r5) edge (r4);
\end{tikzpicture}\label{fig:cutc}}\quad \quad
\subfloat[]{\begin{tikzpicture}[scale=.9]
  	\vertex[fill] (r) at (1,0) [label=left:\scriptsize{$r$}] {};
	\vertex[fill] (r1) at (1.5,.6) [label=above:\scriptsize{$r+1 \  \  \  $}] {};
	\vertex[fill] (r2) at (1.5,-.6) [label=below:\scriptsize{$r+2 \  \  \  $}] {};
	\vertex[fill] (r3) at (2.1,0) [label=right:\scriptsize{$r+3$}] {};
	\vertex[fill] (r4) at (2.7,.6) [label=above:\scriptsize{$\  \  \  r+4$}] {};
    \vertex[fill] (r5) at (2.7,-.6) [label=below:\scriptsize{$\  \  \  r+5$}] {};
	\path
		(r) edge (r1)
		(r) edge (r2)
		(r1) edge (r2)
		(r1) edge (r3)
        (r1) edge (r4)
		(r2) edge (r3)
	    (r2) edge (r5)
	    (r3) edge (r4)
	    (r3) edge (r5);
	  \end{tikzpicture}\label{fig:cutd}}
\caption{Subgraphs which can be constructed on first few vertices of $H$ in Lemma~\ref{lem:begin2}}
\label{fig:cut}
\end{figure}
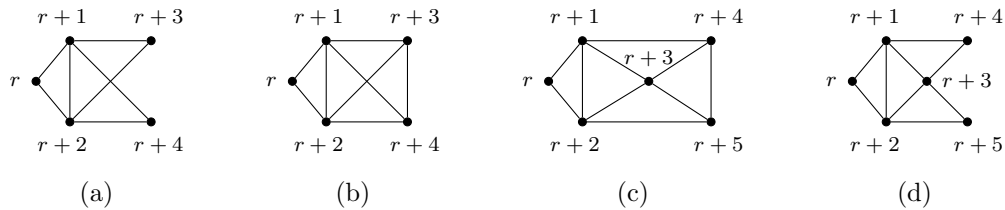

\begin{proof}
First we show that $r\sim r+1,r+2$. To see this, assume that $r$ is not adjacent with some $x\in\{r+1,r+2\}$. So $r$ has some neighbor $y>r+2$.
It is not possible that $y$ is adjacent with any neighbor of $x$ (this requires $y$ having degree $5$).
So there is some vertex $z$ such that $z\sim x$ and $z\nsim y$. Now, $\sw(r, y, x, z)$ makes $r$ adjacent to $x$.
The graph  $H-r$ satisfies the conditions of Lemma \ref{lem:r+1--r+2} with $m=\ell-1$, and so  if $\ell=6,9$,  $H-r$ can be transferred into  $\tilde D'_4, \tilde D'_3$  which means that $H$  can be transferred into $\tilde D_4, \tilde D_3$, respectively.
For $\ell=6,9$, nothing remains to be proved and so we assume that $\ell\ne6,9$.
 Hence, from Lemma \ref{lem:r+1--r+2}, it follows that $r+1\sim r+2$.
By the same argument given above for $r$, we see that  $r + 1$ is adjacent with both $r + 3$ and $r+4$.
 Now $H-r$ satisfies the conditions of Lemma \ref{lem:r+1,r+2VerCut}  and so if $\ell=7$ (i.e. $m=6$), $H-r$ can be transferred into $\bar D_1$ (with
  $r+1\sim r+2$) and so $H$ to  $\tilde D_1$. Therefore, we assume that $\ell\ne7$.
 Thus from Lemma \ref{lem:r+1,r+2VerCut}  it follows that $r+2\sim r+3$.
 So far we have obtained the desired subgraph on $r,\ldots,r+3$, which is
 the same on all the graphs of Figure \ref{fig:cut}.

To conclude the proof, we consider the  possible adjacencies between the three vertices $r+2,r+3$, and $r+4$.
If $r+2\sim r+4$, we are done as we obtain either the subgraph (a) or (b) of Figure~\ref{fig:cut}.
So, in what follows we assume that  $r+2\nsim r+4$. We claim that $r+3\sim r+4$.
By Lemma~\ref{lem:connected}, we may assume that $H\setminus\{r,r+1,r+2\}$ is connected.
Let $x\neq r, r+1, r+3$ be the fourth neighbor of $r+2$.
If $x\sim r+4$, the  desired switch is available, except in the case (a) of Figure~\ref{fig:lemma3.10.6} (in which case $m=8$ and the block $\tilde D_2$ is obtained). If $x\nsim r+4$,	by Remark~\ref{remark}  the desired switch is available in any situation other than the case (b) of Figure~\ref{fig:lemma3.10.6}.
	Then either $y\nsim z$ or $y\nsim w$ for which by either $\sw(r+3, y, r+4, z)$ or $\sw(r+3, y, r+4, w)$, respectively, we have $r+3\sim r+4$ and the claim follows.
  Again we may assume that $H\setminus\{r,\ldots,r+3\}$ is connected.
Our goal is to show that $r+2\sim r+5$ and $r+3\sim r+5$ and thus we will come up with either of the subgraphs (c) or (d) of Figure \ref{fig:cut}. As above $x\neq r, r+1, r+3$ is the fourth neighbor of $r+2$.
 There are two possibilities:
\begin{itemize}
\item[(i)]  $x\sim r+4$. A switch to connect $r+2$ to $r+4$ exists, except in the situation of Figure~\ref{fig:lemma3.10.6a}.
If $x=r+5$, then we are done by reaching  the subgraph given in Figure~\ref{fig:cutc}.
If $y=r+5$, let  $z$ and $w$ be the other neighbors of $r+5$. Then $\sw(r+2,x,r+5,z)$ and $\sw(r+3,x,r+5,w)$ give rise to the subgraph of Figure~\ref{fig:cutc}  again.
If $y\neq r+5$ and $x\neq r+5$,  then $r+5$ has two neighbors $z$ and $w$ that are non-adjacent to $x$. Then by $\sw(r+2,x,r+5,z)$ and $\sw(r+3,x,r+5,w)$ the subgraph  of Figure~\ref{fig:cutd} is obtained.
\item[(ii)]   $x\nsim r+4$. A switch to connect $r+2$ to $r+4$ exists, except in the situation of Figure~\ref{fig:lemma3.10.6b}.
If $x=r+5$, then we are done by reaching  the subgraph of Figure~\ref{fig:cutd}.
Otherwise, in a similar manner, the switches which give rise to the subgraph of Figure~\ref{fig:cutd} can be found easily by examining the  adjacencies between the neighbors of $r+5$ and $r+2$ (or $r+3$).
\end{itemize}
\end{proof}
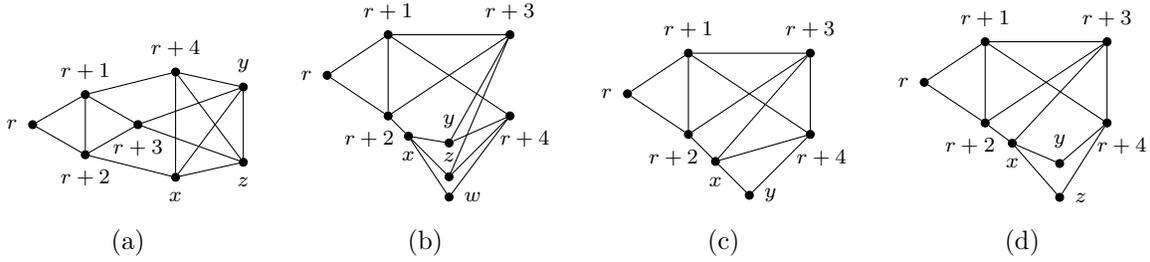
\begin{figure}[h!]
\centering
\subfloat[]{\begin{tikzpicture}
    \vertex[fill] (r5) at (2.5,0) [label=left:\scriptsize{$r$}] {};
	\vertex[fill] (r6) at (3.2,.4) [label=above:\scriptsize{$r+1$}] {};
    \vertex[fill] (r7) at (3.2,-.4) [label=below:\scriptsize{$r+2$}] {};
	\vertex[fill] (r8) at (3.9,0) [label=below:\scriptsize{$r+3$}] {};
    \vertex[fill] (r9) at (4.4,.7) [label=above:\scriptsize{$r+4$}] {};
	\vertex[fill] (r10) at (4.4,-.7) [label=below:\scriptsize{$x$}] {};
    \vertex[fill] (r11) at (5.3,.5) [label=above:\scriptsize{$y$}] {};
	\vertex[fill] (r12) at (5.3,-.5) [label=below:\scriptsize{$z$}] {};
	\path
        (r5) edge (r6)
	    (r5) edge (r7)	
	    (r6) edge (r7)	
	    (r10) edge (r7)
	    (r9) edge (r6)	
	     (r8) edge (r7)
	    (r8) edge (r6)	
        (r12) edge (r8)
	    (r11) edge (r8)	
	    (r9) edge (r10)
	    (r9) edge (r11)
	    (r9) edge (r12)
	    (r10) edge (r11)
	    (r10) edge (r12)
	    (r11) edge (r12);
\end{tikzpicture}\label{fig:lemma3.10.6c}}
	\quad
\subfloat[]{\begin{tikzpicture}[scale=.9]
		\vertex[fill] (r) at (.6,-.4) [label=left:\scriptsize{$r$}] {};
		\vertex[fill] (r1) at (1.5,.2) [label=above:\scriptsize{$r+1$}] {};
		\vertex[fill] (r2) at (1.5,-1) [label=below:\scriptsize{$r+2 \  \ \ \ \  $}] {};
		\vertex[fill] (r3) at (3.3,.2) [label=above:\scriptsize{$r+3$}] {};
		\vertex[fill] (r4) at (3.3,-1) [label=below:\scriptsize{$\ \ \ \ r+4$}] {};
		\vertex[fill] (x) at (1.8,-1.3) [label=below:\scriptsize{$x$}] {};
		\vertex[fill] (y) at (2.4,-1.4) [label=above:\scriptsize{$y$}] {};
		\vertex[fill] (z) at (2.4,-1.9) [label=above:\scriptsize{$z$}] {};
		\vertex[fill] (w) at (2.4,-2.2) [label=right:\scriptsize{$w$}] {};
				\path
		(r) edge (r1)
		(r) edge (r2)
		(r1) edge (r2)
		(r1) edge (r3)
		(r1) edge (r4)
		(r2) edge (r3)
		(r2) edge (x)
		(x) edge (y)
		(x) edge (z)
		(x) edge (w)
		(y) edge (r3)
		(y) edge (r4)
		(r3) edge (z)
		(r4) edge (z)
		(w) edge (r4);
				\end{tikzpicture}\label{fig:lemma3.10.6d}}
	\quad
	\subfloat[]{\begin{tikzpicture}[scale=.9]
	\vertex[fill] (r) at (.6,-.4) [label=left:\scriptsize{$r$}] {};
	\vertex[fill] (r1) at (1.5,.2) [label=above:\scriptsize{$r+1$}] {};
	\vertex[fill] (r2) at (1.5,-1) [label=below:\scriptsize{$r+2 \ \ \ \  $}] {};
	\vertex[fill] (r3) at (3.3,.2) [label=above:\scriptsize{$r+3$}] {};
	\vertex[fill] (r4) at (3.3,-1) [label=below:\scriptsize{$\ \ \ r+4$}] {};
	\vertex[fill] (x) at (1.9,-1.4) [label=below:\scriptsize{$x$}] {};
	\vertex[fill] (y) at (2.4,-1.9) [label=right:\scriptsize{$y$}] {};
   	\path
		(r) edge (r1)
		(r) edge (r2)
		(r1) edge (r2)
		(r1) edge (r3)
	    (r1) edge (r4)
		(r2) edge (r3)
	    (r4) edge (r3)
		(x) edge (r2)
		(x) edge (r3)
		(x) edge (r4)
	    (y) edge (x)
	    (y) edge (r4);
\end{tikzpicture}\label{fig:lemma3.10.6a}}
\quad
\subfloat[]{\begin{tikzpicture}[scale=.9]
	\vertex[fill] (r) at (.6,-.4) [label=left:\scriptsize{$r$}] {};
	\vertex[fill] (r1) at (1.5,.2) [label=above:\scriptsize{$r+1$}] {};
	\vertex[fill] (r2) at (1.5,-1) [label=below:\scriptsize{$r+2 \ \ \ \  $}] {};
	\vertex[fill] (r3) at (3.3,.2) [label=above:\scriptsize{$r+3$}] {};
	\vertex[fill] (r4) at (3.3,-1) [label=below:\scriptsize{$\ \ \ \   r+4$}] {};
	\vertex[fill] (x) at (1.9,-1.3) [label=below:\scriptsize{$x$}] {};
	\vertex[fill] (y) at (2.6,-1.6) [label=above:\scriptsize{$y$}] {};
	\vertex[fill] (z) at (2.6,-2.1) [label=right:\scriptsize{$z$}] {};
	\path
		(r) edge (r1)
		(r) edge (r2)
		(r1) edge (r2)
		(r1) edge (r3)
	    (r1) edge (r4)
	    (r2) edge (x)
	    (r2) edge (r3)
		(r3) edge (x)
	    (r3) edge (r4)
	    (r4) edge (y)
	    (r4) edge (z)
		(x) edge (y)
		(x) edge (z);
	 \end{tikzpicture}\label{fig:lemma3.10.6b}}
\caption{Some possible situations in the proof of Lemma~\ref{lem:begin2}}\label{fig:lemma3.10.6}
\end{figure}

We are now in a position to prove the `first half'  of Theorem~\ref{thm:quartic}, that is to show that $\Gamma$ can be transferred to one of the graphs of Theorem~\ref{thm:quartic}.

\begin{theorem}\label{thm:quarticTOmaximal}
By proper switchings, any connected quartic graph can be turned into one of the graphs described in Theorem~\ref{thm:quartic}.
\end{theorem}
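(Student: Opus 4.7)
The proof is an inductive, iterative application of Lemmas~\ref{lem:begin1} and~\ref{lem:begin2}. The plan is to peel off the desired path-like structure one block at a time, from left to right, using only proper switchings (which by Lemma~\ref{sw} do not increase the algebraic connectivity) and invoking Lemma~\ref{lem:connected} to keep the graph connected throughout.

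First, if $n\le9$, then Lemma~\ref{lem:begin1} already delivers one of $G_5,G_6,G_7,G_8,G_{8'},G_9$, so we may assume $n\ge 10$. We then apply Lemma~\ref{lem:begin1} to transform the subgraph on the first few vertices of $\Gamma$ into one of $D_1$, $D_2$, $D'_3$, or $D'_4$. In the two short cases ($D_1,D_2$), the unique degree-$2$ vertex $r$ of the block is a cut vertex of the (modified) $\Gamma$, with exactly two neighbors outside $[r-1]$, so we are ready to build the next block. In the two "open" cases ($D'_3,D'_4$), this subgraph is the first brick $B_1$ of a long end block, and we must continue joining bricks $M''_1,M''_2$ until we close the block with a mirror brick $\tilde M'_1,\tilde M'_2$, or $\tilde M'_3$.

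For the main iterative step, assume that we have already constructed the graph up to a cut vertex $r$, and let $H$ be the induced subgraph on $\{r,r+1,\ldots,n\}$, which satisfies the hypothesis of Lemma~\ref{lem:begin2}. If $\ell=n-r+1\le9$, Lemma~\ref{lem:begin2} directly produces one of $\tilde D_1,\tilde D_2,\tilde D_3,\tilde D_4$ as the right end block and we are done. Otherwise, Lemma~\ref{lem:begin2} transforms the first few vertices of $H$ into one of the configurations (a)--(d) of Figure~\ref{fig:cut}. Configurations (a) and (b) are the starting pieces of a short middle block ($M_1,M_2$, and after further switches $M$ or $M_3,\tilde M_3$); we close the block by locating the unique additional cut vertex using adjacency arguments analogous to those of Remark~\ref{remark} and Steps~5--6 of Lemma~\ref{lem:begin1}. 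Configurations (c) and (d) are the first brick $M'_2$ or $M'_1$ of a long middle block; we then iterate the arguments of Lemmas~\ref{lem:r+1--r+2} and~\ref{lem:r+1,r+2VerCut} (now applied to a pair of "open" degree-$3$ boundary vertices rather than to a single cut vertex) to attach further bricks $M''_1,M''_2$ one by one, until the block is closed with a mirror brick $\tilde M'_i$ producing a new cut vertex.

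Each iteration completes at least one new block and consumes a definite positive number of vertices, so after finitely many steps we terminate at a right end block whose structure, by symmetry (applying the same analysis with the vertex labels reversed), must be either some $\tilde D_i$ or the mirror of a long end block. The main obstacle is the case analysis in the middle-block step: we must verify that from each of the four configurations of Figure~\ref{fig:cut} there is a sequence of proper switches that closes the current block as exactly one of the admitted short or long types, without disturbing the already-constructed subgraph on $[r]$ and without disconnecting the remainder. This is carried out by the same bookkeeping of neighbor sets as in Lemma~\ref{lem:begin1}, with Remark~\ref{remark} identifying the obstructive local configurations and Lemma~\ref{lem:connected} ensuring that connectivity is restored whenever a switch threatens to break it.
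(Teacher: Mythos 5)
Your overall strategy coincides with the paper's: handle $n\le9$ by Lemma~\ref{lem:begin1}, build the leftmost block by Lemma~\ref{lem:begin1}, restart after each cut vertex via Lemma~\ref{lem:begin2}, and propagate through the interior by re-invoking the switching lemmas (the paper organizes this propagation as the three boundary situations of Table~\ref{tab:v,v+1}, which is essentially your ``pair of open degree-$3$ boundary vertices versus a single cut vertex'' dichotomy). Up to that point the proposal is a fair, if compressed, account of the actual argument.

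However, there is a genuine gap at the end. You assert that each iteration ``closes the current block as exactly one of the admitted short or long types,'' but the local boundary analysis does not deliver this: the outcomes of the iteration legitimately include an X$'$-shape (Figure~\ref{fig:thm:quarticTOmaximald}), and X-shapes (Figure~\ref{fig:newShape}) that may land in disallowed positions --- two consecutive X-shapes, or an X-shape whose adjacent left (resp.\ right) pair does not sit at the left end of an $M$- or $M'_3$-type brick (resp.\ right end of an $M$- or $\tilde M'_3$-type brick). Whether an X-shape is admissible depends on what lies on \emph{both} sides of it, so it cannot be certified at the moment the shape is created; the paper therefore needs a separate global cleanup phase (the switchings of Figures~\ref{fig:thm:quarticTOmaximald}--\ref{fig:Xf}) converting every stray X$'$-shape and misplaced X-shape into an allowed configuration. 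Without this phase your argument only yields a path-like graph built from \emph{some} bricks, not from the specific blocks listed in Theorem~\ref{thm:quartic}. A secondary inaccuracy: configurations (c) and (d) of Figure~\ref{fig:cut} contain the X-shape and hence feed into $M$, $M_3$, or an $M'_3$-headed long block, while (a) and (b) are the $M_2$- and $M_1$-type openings; your assignment of these cases is reversed, though this does not affect the structure of the argument.
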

\begin{proof}
For $n\le9$ the assertion is proved in Lemma~\ref{lem:begin1}. So we may assume that $n\ge10$.
We start rebuilding $\Gamma$ on its first few vertices  as in Lemma~\ref{lem:begin1}. As we saw there, the first few vertices of $\Gamma$ can be  transformed into one of the subgraphs $D_1,D_2,D'_3,D'_4$. Moreover, whatever we  obtained, we ended up either with a cut vertex, or with one of the situations (i) or (ii) of Table~\ref{tab:v,v+1}.
Since cut vertices in a quartic graph have necessarily degree $2$, we can employ Lemma~\ref{lem:begin2} for
 reconnecting following a cut vertex. Doing so,   we again reach at one of the  situations (i), (ii), or (iii) of Table~\ref{tab:v,v+1}.
We now demonstrate what can be constructed  afterwards. As verified below, by proper switchings,
 the situation of the next few vertices can be determined from the situation of $v,v+1$ according to Table~\ref{tab:v,v+1}.
\begin{table}[h!]
 	\centering
 	\begin{tabular}{cc}
 		\hline
 		 situation of $v,v+1$  &   situation of next few vertices after appropriate switchings \\ \hline
  (i)~~$\begin{array}{c}\begin{tikzpicture}[scale=.7]
	\vertex[fill] (r) at (0,0) [label=above:\scriptsize{$v$}] {};
	\vertex[fill] (r1) at (0,-1) [label=below:\scriptsize{$v+1$}] {};
    \tikzstyle{vertex}=[circle, draw, inner sep=0pt, minimum size=0pt]
    \vertex (s) at (-.4,0) [] {};
	\vertex (ss) at (-.4,-.3) [] {};
    \vertex (sss) at (-.4,-.7) [] {};
	\vertex (ssss) at (-.4,-1) [] {};
	\path
        (r) edge (r1)
		(r) edge (s)
		(r) edge (ss)
		(r1) edge (sss)
		(r1) edge (ssss);
\end{tikzpicture} \end{array}$
&
$\begin{array}{cccc}
\begin{tikzpicture}[scale=.7]
	\vertex[fill] (r) at (0,0) [label=above:\scriptsize{$v$}] {};
	\vertex[fill] (r1) at (0,-1) [label=below:\scriptsize{$v+1$}] {};
    \vertex[fill] (r2) at (.5,-.5) [label=right:\scriptsize{$v+2$}] {};
    \tikzstyle{vertex}=[circle, draw, inner sep=0pt, minimum size=0pt]
    \vertex (s) at (-.4,0) [label=right:$$] {};
	\vertex (ss) at (-.4,-.3) [label=right:$$] {};
    \vertex (sss) at (-.4,-.7) [label=right:$$] {};
	\vertex (ssss) at (-.4,-1) [label=right:$$] {};
	\path
        (r) edge (r1)
         (r1) edge (r2)
          (r) edge (r2)
		(r) edge (s)
		(r) edge (ss)
		(r1) edge (sss)
		(r1) edge (ssss);
\end{tikzpicture}
&
    \begin{tikzpicture}[scale=.7]
	\vertex[fill] (r3) at (1.5,.5) [label=above:\scriptsize{$v$}] {};
	\vertex[fill] (r4) at (1.5,-.5) [label=below:\scriptsize{$v+1$}] {};
    \vertex[fill] (r5) at (2.8,.5) [label=above:\scriptsize{$v+2$}] {};
	\vertex[fill] (r6) at (2.8,-.5) [label=below:\scriptsize{$v+3$}] {};
\tikzstyle{vertex}=[circle, draw, inner sep=0pt, minimum size=0pt]
    \vertex (s) at (1.1,.5) [label=right:$$] {};
	\vertex (ss) at (1.1,.2) [label=right:$$] {};
    \vertex (sss) at (1.1,-.2) [label=right:$$] {};
	\vertex (ssss) at (1.1,-.5) [label=right:$$] {};
	\path
	    (r3) edge (r4)
	    (r5) edge (r3)
	    (r4) edge (r6)
	    (r5) edge (r6)
	    (r3) edge (s)
		(r3) edge (ss)
		(r4) edge (sss)
		(r4) edge (ssss);
\end{tikzpicture}
&
 \begin{tikzpicture}[scale=.7]
	\vertex[fill] (r3) at (1.5,.5) [label=above:\scriptsize{$v$}] {};
	\vertex[fill] (r4) at (1.5,-.5) [label=below:\scriptsize{$v+1$}] {};
    \vertex[fill] (r5) at (2.5,.5) [] {};
	\vertex[fill] (r6) at (2.5,-.5) [] {};
    \vertex[fill] (r7) at (3.5,.5) [] {};
	\vertex[fill] (r8) at (3.5,-.5) [] {};
    \vertex[fill] (r9) at (4,0) [] {};
    \tikzstyle{vertex}=[circle, draw, inner sep=0pt, minimum size=0pt]
    \vertex (s) at (1.1,.5) [label=right:$$] {};
	\vertex (ss) at (1.1,.2) [label=right:$$] {};
    \vertex (sss) at (1.1,-.2) [label=right:$$] {};
	\vertex (ssss) at (1.1,-.5) [label=right:$$] {};
	\path
	    (r4) edge (r6)
	    (r5) edge (r3)
	    (r5) edge (r7)
	    (r5) edge (r8)
	    (r6) edge (r7)
	     (r6) edge (r8)
	     (r7) edge (r8)
	     (r9) edge (r8)
	     (r9) edge (r7)
	     (r9) edge (r6)
	     (r9) edge (r5)
	      (r3) edge (r4)
	      (r3) edge (s)
		(r3) edge (ss)
		(r4) edge (sss)
		(r4) edge (ssss);
\end{tikzpicture}
&
\begin{tikzpicture}[scale=.7]
	\vertex[fill] (r3) at (1.5,.5) [label=above:\scriptsize{$v$}] {};
	\vertex[fill] (r4) at (1.5,-.5) [label=below:\scriptsize{$v+1$}] {};
    \vertex[fill] (r5) at (2.5,.5) [] {};
	\vertex[fill] (r6) at (2.5,-.5) [] {};
    \vertex[fill] (r7) at (3.5,.5) [] {};
	\vertex[fill] (r8) at (3.5,-.5) [] {};
    \vertex[fill] (r9) at (4.3,.7) [] {};
	\vertex[fill] (r10) at (4.3,-.7) [] {};
    \vertex[fill] (r11) at (5.3,.5) [] {};
	\vertex[fill] (r12) at (5.3,-.5) [] {};
\tikzstyle{vertex}=[circle, draw, inner sep=0pt, minimum size=0pt]
    \vertex (s) at (1.1,.5) [label=right:$$] {};
	\vertex (ss) at (1.1,.2) [label=right:$$] {};
    \vertex (sss) at (1.1,-.2) [label=right:$$] {};
	\vertex (ssss) at (1.1,-.5) [label=right:$$] {};
	\path
        (r3) edge (r4)
	    (r4) edge (r6)
	    (r5) edge (r3)
	    (r5) edge (r6)
	    (r5) edge (r7)
	    (r5) edge (r8)
	    (r6) edge (r7)
	    (r6) edge (r8)
	    (r12) edge (r8)
	    (r10) edge (r8)
        (r9) edge (r7)
	    (r11) edge (r7)
	    (r9) edge (r10)
	    (r9) edge (r11)
	    (r9) edge (r12)
	    (r10) edge (r11)
	    (r10) edge (r12)
	    (r11) edge (r12)
	      (r3) edge (s)
		(r3) edge (ss)
		(r4) edge (sss)
		(r4) edge (ssss);
\end{tikzpicture}\end{array}$
   \\ \hline\vspace{-.25cm}\\
 (ii)~~$\begin{array}{c}\begin{tikzpicture}[scale=.7]
	\vertex[fill] (r) at (0,0) [label=above:\scriptsize{$v$}] {};
	\vertex[fill] (r1) at (0,-1) [label=below:\scriptsize{$v+1$}] {};
\tikzstyle{vertex}=[circle, draw, inner sep=0pt, minimum size=0pt]
    \vertex (s) at (-.4,0) [] {};
	\vertex (ss) at (-.4,-.3) [] {};
    \vertex (sss) at (-.4,-.7) [] {};
	\vertex (ssss) at (-.4,-1) [] {};
	\path
		(r) edge (s)
		(r) edge (ss)
		(r1) edge (sss)
		(r1) edge (ssss);
\end{tikzpicture}\end{array}$
&  $\begin{array}{ccccc}
\begin{tikzpicture}[scale=.7]
	\vertex[fill] (r3) at (1.5,.5) [label=above:\scriptsize{$v$}] {};
	\vertex[fill] (r4) at (1.5,-.5) [label=below:\scriptsize{$v+1$}] {};
    \vertex[fill] (r5) at (2.8,.5) [label=above:\scriptsize{$v+2$}] {};
	\vertex[fill] (r6) at (2.8,-.5) [label=below:\scriptsize{$v+3$}] {};
\tikzstyle{vertex}=[circle, draw, inner sep=0pt, minimum size=0pt]
    \vertex (s) at (1.1,.5) [label=right:$$] {};
	\vertex (ss) at (1.1,.2) [label=right:$$] {};
    \vertex (sss) at (1.1,-.2) [label=right:$$] {};
	\vertex (ssss) at (1.1,-.5) [label=right:$$] {};
	\path
	    (r5) edge (r3)
	    (r4) edge (r6)
	    (r4) edge (r5)
	    (r3) edge (r6)
	     (r3) edge (s)
		(r3) edge (ss)
		(r4) edge (sss)
		(r4) edge (ssss);
\end{tikzpicture}
&&
    \begin{tikzpicture}[scale=.7]
	\vertex[fill] (r3) at (1.5,.5) [label=above:\scriptsize{$v$}] {};
	\vertex[fill] (r4) at (1.5,-.5) [label=below:\scriptsize{$v+1$}] {};
    \vertex[fill] (r5) at (2.2,0) [label=right:\scriptsize{$v+2$}] {};
    \vertex[fill] (r6) at (2.8,.5) [label=above:\scriptsize{$v+3$}] {};
	\vertex[fill] (r7) at (2.8,-.5) [label=below:\scriptsize{$v+4$}] {};
\tikzstyle{vertex}=[circle, draw, inner sep=0pt, minimum size=0pt]
    \vertex (s) at (1.1,.5) [label=right:$$] {};
	\vertex (ss) at (1.1,.2) [label=right:$$] {};
    \vertex (sss) at (1.1,-.2) [label=right:$$] {};
	\vertex (ssss) at (1.1,-.5) [label=right:$$] {};
	\path
	    (r3) edge (r5)
	    (r4) edge (r5)
	    (r5) edge (r6)
	    (r7) edge (r5)
	    (r3) edge (r6)
	    (r4) edge (r7)
	    (r3) edge (s)
		(r3) edge (ss)
		(r4) edge (sss)
		(r4) edge (ssss);
\end{tikzpicture}
&&
\begin{tikzpicture}[scale=.7]
    \vertex[fill] (r7) at (3.5,.5) [label=above:\scriptsize{$v$}] {};
	\vertex[fill] (r8) at (3.5,-.5) [label=below:\scriptsize{$v+1$}] {};
    \vertex[fill] (r9) at (4.3,.7) [] {};
	\vertex[fill] (r10) at (4.3,-.7) [] {};
    \vertex[fill] (r11) at (5.3,.5) [] {};
	\vertex[fill] (r12) at (5.3,-.5) [] {};
\tikzstyle{vertex}=[circle, draw, inner sep=0pt, minimum size=0pt]
    \vertex (s) at (3.1,.5) [label=right:$$] {};
	\vertex (ss) at (3.1,.2) [label=right:$$] {};
    \vertex (sss) at (3.1,-.2) [label=right:$$] {};
	\vertex (ssss) at (3.1,-.5) [label=right:$$] {};
	\path
	    (r12) edge (r8)
	    (r10) edge (r8)
        (r9) edge (r7)
	    (r11) edge (r7)
	    (r9) edge (r10)
	    (r9) edge (r11)
	    (r9) edge (r12)
	    (r10) edge (r11)
	    (r10) edge (r12)
	    (r11) edge (r12)
	      (r7) edge (s)
		(r7) edge (ss)
		(r8) edge (sss)
		(r8) edge (ssss);
\end{tikzpicture}\end{array}$ ~~ or returning to (i)
\\ \hline\vspace{-.25cm}\\
 (iii)~~$\begin{array}{c}\begin{tikzpicture}[scale=.7]
	\vertex[fill] (r) at (0,0) [label=above:\scriptsize{$v$}] {};
	\vertex[fill] (r1) at (0,-1) [label=below:\scriptsize{$v+1$}] {};
    \tikzstyle{vertex}=[circle, draw, inner sep=0pt, minimum size=0pt]
    \vertex (s) at (-.4,0) [] {};
	\vertex (ss) at (-.4,-1) [] {};
	\path
        (r) edge (r1)
		(r) edge (s)
		(r1) edge (ss);
\end{tikzpicture}\end{array}$
&
  $\begin{array}{c}\begin{tikzpicture}[scale=.7]
	\vertex[fill] (r) at (0,0) [label=above:\scriptsize{$v$}] {};
	\vertex[fill] (r1) at (0,-1) [label=below:\scriptsize{$v+1$}] {};
 \vertex[fill] (r2) at (1.3,0) [label=above:\scriptsize{$v+2$}] {};
	\vertex[fill] (r3) at (1.3,-1) [label=below:\scriptsize{$v+3$}] {};
     \tikzstyle{vertex}=[circle, draw, inner sep=0pt, minimum size=0pt]
    \vertex (s) at (-.4,0) [label=right:$$] {};
	\vertex (ss) at (-.4,-1) [label=right:$$] {};
	\path
        (r) edge (r1)
        (r) edge (r2)
        (r) edge (r3)
        (r1) edge (r2)
        (r1) edge (r3)
		(r) edge (s)
		(r1) edge (ss);
\end{tikzpicture}\end{array}$ ~~ or $v$ turns to a cut vertex
  \\ 		\hline
 	\end{tabular}
 	\caption{The situation of two vertical vertices and the possible structures following them}\label{tab:v,v+1}
 \end{table}

In  Case (i) it is easily seen, by switching, that $v\sim v+2$. If further $v+1 \sim v+2$, then we obtain the first possible outcome.
If $v+1\nsim v+2$, it is easily seen, by switching, that $v+1\sim v+3$.
Now, we can employ Lemma~\ref{lem:r+1--r+2} (with $H=\Gamma\setminus[v+1]$), which implies that either $v+2\sim v+3$ or either of $\tilde D'_3$ or $\tilde D'_4$ as an end block can be obtained.

 In (ii), we assume that $v\nsim v+1$, otherwise we return to Case (i).
  It is  easily seen, by switching, that $v\sim v+2$ and $v\sim v+3$. Then, $H=\Gamma\setminus[v-1]$ satisfies
  Lemma~\ref{lem:r+1,r+2VerCut} and so that either $v+1\sim v+2$, or we  obtain the third possible outcome, in which case  we either get $\tilde D'_3$  or we are left with one of the  two situations (a) or (b) of Figure~\ref{fig:thm:quarticTOmaximal}.
For (a), by  $\sw(y,v+1,x,v)$ and then $\sw(x,v+1,v,z)$, and for (b), by $\sw(x,v+1,v,z)$, we obtain $\tilde D'_4$.
  So we assume that $v+1\sim v+2$. If  we further  have $v+1\sim v+3$, we come up with the first possible outcome.  So assume that $v+1\nsim v+3$. Then it easy to find a switch that  ensures $v+1\sim v+4$. If $v+2\sim v+3$ and  $v+2\sim v+4$, then we obtain the second possible outcome. Otherwise, we have either $v+2\nsim v+3$ or $v+2\nsim v+4$, and then  $\sw(v,v+3,v+1,v+2)$ or  $\sw(v,v+2,v+1,v+4)$, respectively, ensures that $v\sim v+1$, which return us to  Case~(i).

 In (iii), it is  easily seen, by switching, that $v\sim v+2$ and $v\sim v+3$, as shown in Figure~\ref{fig:thm:quarticTOmaximalc}.
If $v+1\nsim v+2$,  then, by $\sw(x, v+1, v, v+2)$,  $v$ is turned to a  cut vertex $v$. Now, let $v+1\sim v+2$. If further $v+1\sim v+3$,  then we obtain the first outcome, otherwise by $\sw(x, v+1, v, v+3)$, $v$ is turned  into a  cut vertex $v$.
\begin{figure}[h!]
\centering
\subfloat[]
 { \begin{tikzpicture}[scale=.9]
    \vertex[fill] (r5) at (2.5,.5) [label=above:\scriptsize{$y$}] {};
	\vertex[fill] (r6) at (2.5,-.5) [label=below:\scriptsize{$x$}] {};
    \vertex[fill] (r7) at (3.5,.5) [label=above:\scriptsize{$v$}] {};
	\vertex[fill] (r8) at (3.5,-.5) [label=below:\scriptsize{$v+1$}] {};
    \vertex[fill] (r9) at (4.3,.7) [] {};
	\vertex[fill] (r10) at (4.3,-.7) [] {};
    \vertex[fill] (r11) at (5.3,.5) [label=above:\scriptsize{$z$}] {};
	\vertex[fill] (r12) at (5.3,-.5) [] {};
\tikzstyle{vertex}=[circle, draw, inner sep=0pt, minimum size=0pt]
    \vertex (s) at (2.2,.5) [label=right:$$] {};
	\vertex (ss) at (2.2,.2) [label=right:$$] {};
    \vertex (sss) at (2.2,-.2) [label=right:$$] {};
	\vertex (ssss) at (2.2,-.5) [label=right:$$] {};
	\path
	    (r5) edge (r7)
	    (r5) edge (r8)
	    (r6) edge (r7)
	    (r6) edge (r8)
	    (r12) edge (r8)
	    (r10) edge (r8)
        (r9) edge (r7)
	    (r11) edge (r7)
	    (r9) edge (r10)
	    (r9) edge (r11)
	    (r9) edge (r12)
	    (r10) edge (r11)
	    (r10) edge (r12)
	    (r11) edge (r12)
	    (r5) edge (s)
		(r5) edge (ss)
		(r6) edge (sss)
		(r6) edge (ssss);
\end{tikzpicture}\label{fig:thm:quarticTOmaximala}}
\quad  \subfloat[] {\begin{tikzpicture}[scale=.9]
	 \vertex[fill] (r5) at (2.5,.5) [] {};
	\vertex[fill] (r6) at (2.5,-.5) [label=below:\scriptsize{$x$}] {};
    \vertex[fill] (r) at (3,0) [] {};
    \vertex[fill] (r7) at (3.5,.5) [label=above:\scriptsize{$v$}] {};
	\vertex[fill] (r8) at (3.5,-.5) [label=below:\scriptsize{$v+1$}] {};
    \vertex[fill] (r9) at (4.3,.7) [] {};
	\vertex[fill] (r10) at (4.3,-.7) [] {};
    \vertex[fill] (r11) at (5.3,.5) [label=above:\scriptsize{$z$}] {};
	\vertex[fill] (r12) at (5.3,-.5) [] {};
	\path
        (r5) edge (r7)
	    (r5) edge (r)
	    (r8) edge (r)
	    (r6) edge (r)
	    (r7) edge (r)
	    (r6) edge (r8)
	    (r12) edge (r8)
	    (r10) edge (r8)
        (r9) edge (r7)
	    (r11) edge (r7)
	    (r9) edge (r10)
	    (r9) edge (r11)
	    (r9) edge (r12)
	    (r10) edge (r11)
	    (r10) edge (r12)
	    (r11) edge (r12);
\end{tikzpicture}
\label{fig:thm:quarticTOmaximalb}}
\quad \subfloat[]{\begin{tikzpicture}[scale=.9]
	\vertex[fill] (r3) at (1.5,.5) [] {};
	\vertex[fill] (r4) at (1.5,-.5) [label=below:\scriptsize{$x$}] {};
    \vertex[fill] (r5) at (2.5,.5) [label=above:\scriptsize{$v$}] {};
    \vertex[fill] (r6) at (2.5,-.5) [label=below:\scriptsize{$v+1$}] {};
	\vertex[fill] (r7) at (3.5,.5) [label=above:\scriptsize{$v+2$}] {};
    \vertex[fill] (r8) at (3.5,-.5) [label=below:\scriptsize{$v+3$}] {};
	\path
	    (r3) edge (r5)
	    (r4) edge (r6)
	    (r6) edge (r5)
	    (r5) edge (r7)
		(r5) edge (r8);
\end{tikzpicture}
\label{fig:thm:quarticTOmaximalc}}
\caption{Some of possible situations in the proof of Theorem~\ref{thm:quarticTOmaximal} }\label{fig:thm:quarticTOmaximal}
\end{figure}
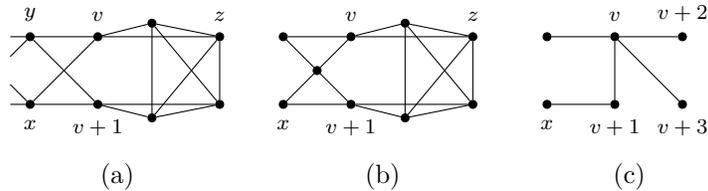

The outcome of Table~\ref{tab:v,v+1} is either an end block or, after proper reconnecting, we are again in one of the situations (i), (ii), (iii).
Therefore, we may keep repeating this until we end up with an end block.

We need further switchings to  transform the blocks into the structure asserted in Theorem~\ref{thm:quartic}.
Two types of structures may still appear in our graph: X-shape (Figure~\ref{fig:newShape}) and X$'$-shape (Figure~\ref{fig:thm:quarticTOmaximald}).
The X$'$-shape, in which $a\nsim c$ and $b\nsim d$, should be avoided. We can simply remove it by  $\sw(a,b,c,d)$, which transfers it to Figure~\ref{fig:thm:quarticTOmaximale}. For X-shapes the situation is different. They should only appear in specific places, namely in the short blocks $M,M_3,\tilde M_3$ or in an $M'_3$ or $\tilde M'_3$ as the first brick or the last brick in a long block, respectively.

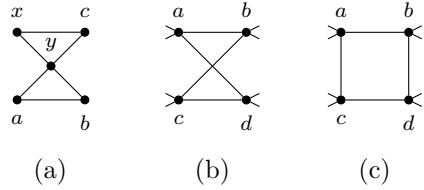
\begin{figure}[h!]
\centering
\subfloat[]{\begin{tikzpicture}[scale=.9]
	\vertex[fill] (r1) at (1.5,.5) [label=above:\scriptsize{$x$}] {};
	\vertex[fill] (r2) at (1.5,-.5) [label=below:\scriptsize{$a$}] {};
	\vertex[fill] (r3) at (2,0) [label=above:\scriptsize{$y$}] {};
	\vertex[fill] (r4) at (2.5,.5) [label=above:\scriptsize{$c$}] {};
    \vertex[fill] (r5) at (2.5,-.5) [label=below:\scriptsize{$b$}] {};
   	\path
		(r1) edge (r3)
        (r1) edge (r4)
		(r2) edge (r3)
	    (r2) edge (r5)
	    (r3) edge (r4)
	    (r3) edge (r5);
\end{tikzpicture}\label{fig:newShape}}
\qquad
 \subfloat[]{\begin{tikzpicture}[scale=.9]
	\vertex[fill] (r) at (0,0) [label=above:\scriptsize{$a$}] {};
	\vertex[fill] (r1) at (0,-1) [label=below:\scriptsize{$c$}] {};
    \vertex[fill] (r2) at (1,0) [label=above:\scriptsize{$b$}] {};
	\vertex[fill] (r3) at (1,-1) [label=below:\scriptsize{$d$}] {};
    \tikzstyle{vertex}=[circle, draw, inner sep=0pt, minimum size=0pt]
    \vertex (s) at (-.2,.1) [] {};
	\vertex (ss) at (-.2,-.1) [] {};
    \vertex (sss) at (-.2,-.9) [] {};
	\vertex (ssss) at (-.2,-1.1) [] {};
    \vertex (z) at (1.2,.1) [] {};
	\vertex (zz) at (1.2,-.1)[] {};
    \vertex (zzz) at (1.2,-.9) [] {};
	\vertex (zzzz) at (1.2,-1.1) [] {};
	\path
        (r) edge (r3)
		(r2) edge (r1)
		(r) edge (s)
		(r) edge (ss)
		(r1) edge (sss)
		(r1) edge (ssss)
	    (r2) edge (z)
		(r2) edge (zz)
		(r3) edge (zzz)
		(r3) edge (zzzz)
	    (r) edge (r2)
		(r1) edge (r3);
\end{tikzpicture}\label{fig:thm:quarticTOmaximald}}
\qquad
\subfloat[]{\begin{tikzpicture}[scale=.9]
	\vertex[fill] (r) at (0,0) [label=above:\scriptsize{$a$}] {};
	\vertex[fill] (r1) at (0,-1) [label=below:\scriptsize{$c$}] {};
    \vertex[fill] (r2) at (1,0) [label=above:\scriptsize{$b$}] {};
	\vertex[fill] (r3) at (1,-1) [label=below:\scriptsize{$d$}] {};
    \tikzstyle{vertex}=[circle, draw, inner sep=0pt, minimum size=0pt]
    \vertex (s) at (-.2,.1) [] {};
	\vertex (ss) at (-.2,-.1) [] {};
    \vertex (sss) at (-.2,-.9) [] {};
	\vertex (ssss) at (-.2,-1.1) [] {};
    \vertex (z) at (1.2,.1) [] {};
	\vertex (zz) at (1.2,-.1) [] {};
    \vertex (zzz) at (1.2,-.9) [] {};
	\vertex (zzzz) at (1.2,-1.1) [] {};
	\path
        (r) edge (r1)
		(r2) edge (r3)
		(r) edge (s)
		(r) edge (ss)
		(r1) edge (sss)
		(r1) edge (ssss)
	    (r2) edge (z)
		(r2) edge (zz)
		(r3) edge (zzz)
		(r3) edge (zzzz)
	    (r) edge (r2)
		(r1) edge (r3);
\end{tikzpicture}\label{fig:thm:quarticTOmaximale}}
\caption{X- and X$'$-shape}
\end{figure}

First note that if we have two consecutive X-shapes as in Figure~\ref{fig:Xa}, then by $\sw(a,b,c,d)$, we can  transfer it to  Figure~\ref{fig:Xd}.
If in an X-shape, the two right vertices are adjacent and it is neither in an $M$-block,  nor in an $\tilde M'_3$ (as the last brick in a long block), then it must be in the situation of Figure~\ref{fig:Xb}, which can be transferred by $\sw(a,b,c,d)$  to  Figure~\ref{fig:Xe}.
If in an X-shape, the two left vertices are adjacent and it is neither in an $M$-block, nor in an  $ M'_3$ (as the first brick in a long block), then it must be in the situation of Figure~\ref{fig:Xc}, which  can be transferred by $\sw(a,b,c,d)$ to  Figure~\ref{fig:Xf}.

\begin{figure}[h!]
\centering
\quad ~
\subfloat[]{\begin{tikzpicture}[scale=.9]
	\vertex[fill] (1) at (.5,.5) [label=above:\scriptsize{$x$}] {};
   \vertex[fill] (2) at (.5,-.5) [label=below:\scriptsize{$a$}] {};
   \vertex[fill] (3) at (1,0) [label=above:\scriptsize{$y$}] {};
	\vertex[fill] (r1) at (1.5,.5) [label=above:\scriptsize{$c$}] {};
	\vertex[fill] (r2) at (1.5,-.5) [label=below:\scriptsize{$b$}] {};
	\vertex[fill] (r3) at (2,0) [label=above:\scriptsize{$z$}] {};
	\vertex[fill] (r4) at (2.5,.5) [label=above:\scriptsize{$d$}] {};
    \vertex[fill] (r5) at (2.5,-.5) [label=below:\scriptsize{$w$}] {};
	\path
		(1) edge (r1)
		(2) edge (r2)
	    (1) edge (3)
		(2) edge (3)
	    (3) edge (r1)
		(3) edge (r2)
		(r1) edge (r4)
		(r1) edge (r3)
		(r2) edge (r3)
	    (r2) edge (r5)
	    (r3) edge (r4)
	    (r3) edge (r5);
\end{tikzpicture}\label{fig:Xa}}
\quad \quad \quad
\subfloat[]{\begin{tikzpicture}[scale=.9]
	\vertex[fill] (r1) at (1.5,.5) [label=above:\scriptsize{$x$}] {};
	\vertex[fill] (r2) at (1.5,-.5) [label=below:\scriptsize{$a$}] {};
	\vertex[fill] (r3) at (2,0) [label=above:\scriptsize{$y$}] {};
	\vertex[fill] (r4) at (2.5,.5) [label=above:\scriptsize{$c$}] {};
    \vertex[fill] (r5) at (2.5,-.5) [label=below:\scriptsize{$b$}] {};
   \vertex[fill] (r6) at (3.5,.5) [label=above:\scriptsize{$d$}] {};
   \vertex[fill] (r7) at (3.5,-.5) [label=below:\scriptsize{$z$}] {};
	\path
		(r1) edge (r3)
        (r1) edge (r4)
		(r2) edge (r3)
	    (r2) edge (r5)
	    (r3) edge (r4)
	    (r3) edge (r5)
	    (r5) edge (r4)
	     (r5) edge (r7)
	      (r4) edge (r6) ;
\end{tikzpicture}\label{fig:Xb}}
\quad \quad \quad
\subfloat[]{\begin{tikzpicture}[scale=.9]
	\vertex[fill] (1) at (.5,.5) [label=above:\scriptsize{$x$}] {};
\vertex[fill] (2) at (.5,-.5) [label=below:\scriptsize{$a$}] {};
	\vertex[fill] (r1) at (1.5,.5) [label=above:\scriptsize{$c$}] {};
	\vertex[fill] (r2) at (1.5,-.5) [label=below:\scriptsize{$b$}] {};
	\vertex[fill] (r3) at (2,0) [label=above:\scriptsize{$y$}] {};
	\vertex[fill] (r4) at (2.5,.5) [label=above:\scriptsize{$d$}] {};
    \vertex[fill] (r5) at (2.5,-.5) [label=below:\scriptsize{$z$}] {};
	\path
		(1) edge (r1)
		(2) edge (r2)
		(r1) edge (r2)
		(r1) edge (r3)
        (r1) edge (r4)
		(r2) edge (r3)
	    (r2) edge (r5)
	    (r3) edge (r4)
	    (r3) edge (r5);
\end{tikzpicture}\label{fig:Xc}}
\\
\subfloat[]{\begin{tikzpicture}[scale=.9]
	\vertex[fill] (r1) at (.5,.5) [label=above:\scriptsize{$x$}] {};
	\vertex[fill] (r2) at (.5,-.5) [label=below:\scriptsize{$a$}] {};
	\vertex[fill] (r3) at (1.5,.5) [label=above:\scriptsize{$y$}] {};
	\vertex[fill] (r4) at (1.5,-.5) [label=below:\scriptsize{$c$}] {};
    \vertex[fill] (r5) at (2.5,.5) [label=above:\scriptsize{$b$}] {};
    \vertex[fill] (r6) at (2.5,-.5) [label=below:\scriptsize{$z$}] {};
	\vertex[fill] (r7) at (3.5,.5) [label=above:\scriptsize{$d$}] {};
    \vertex[fill] (r8) at (3.5,-.5) [label=below:\scriptsize{$w$}] {};
	\path
        (r4) edge (r3)
		(r1) edge (r3)
        (r1) edge (r4)
		(r2) edge (r3)
	    (r2) edge (r4)
	    (r3) edge (r5)
	    (r4) edge (r6)
	    (r6) edge (r5)
	    (r5) edge (r7)
		(r5) edge (r8)
	    (r6) edge (r7)
	    (r6) edge (r8);
\end{tikzpicture}\label{fig:Xd}}
 \quad \quad \quad
 \subfloat[]{\begin{tikzpicture}[scale=.9]
	\vertex[fill] (r1) at (.5,.5) [label=above:\scriptsize{$x$}] {};
	\vertex[fill] (r2) at (.5,-.5) [label=below:\scriptsize{$a$}] {};
	\vertex[fill] (r3) at (1.5,.5) [label=above:\scriptsize{$y$}] {};
	\vertex[fill] (r4) at (1.5,-.5) [label=below:\scriptsize{$c$}] {};
    \vertex[fill] (r5) at (2,0) [label=above:\scriptsize{$b$}] {};
    \vertex[fill] (r6) at (2.5,.5) [label=above:\scriptsize{$d$}] {};
	\vertex[fill] (r7) at (2.5,-.5) [label=below:\scriptsize{$z$}] {};
	\path
		(r1) edge (r3)
        (r1) edge (r4)
		(r2) edge (r3)
	    (r2) edge (r4)
	    (r3) edge (r4)
	    (r5) edge (r4)
	    (r3) edge (r5)
	    (r5) edge (r6)
		(r5) edge (r7);
\end{tikzpicture}\label{fig:Xe}}
\quad \quad \quad
 \subfloat[]{\begin{tikzpicture}[scale=.9]
    \vertex[fill] (1) at (-.5,.5) [label=above:\scriptsize{$x$}] {};
	\vertex[fill] (2) at (-.5,-.5) [label=below:\scriptsize{$a$}] {};
	\vertex[fill] (r) at (0,0) [label=above:\scriptsize{$c$}] {};
	\vertex[fill] (r1) at (.5,.5) [label=above:\scriptsize{$b$}] {};
	\vertex[fill] (r2) at (.5,-.5) [label=below:\scriptsize{$y$}] {};
	\vertex[fill] (r3) at (1.5,.5) [label=above:\scriptsize{$d$}] {};
	\vertex[fill] (r4) at (1.5,-.5) [label=below:\scriptsize{$z$}] {};
	\path
        (r) edge (1)
		(r) edge (2)
		(r) edge (r1)
		(r) edge (r2)
		(r1) edge (r2)
		(r1) edge (r3)
        (r1) edge (r4)
		(r2) edge (r3)
	    (r2) edge (r4);
\end{tikzpicture}\label{fig:Xf}}
\caption{Some possible situations for  X-shapes and the results of applying $\sw(a,b,c,d)$ }
\end{figure}
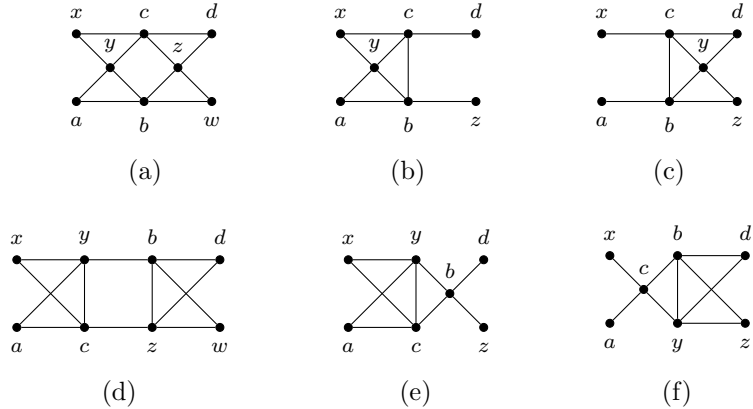

The above arguments show that $\Gamma$ can be  transformed  into one of the graphs  described in Theorem \ref{thm:quartic}.
\end{proof}

\subsubsection{Final Step}  \label{s4}

 Let $\M$ denote the family of graphs described in Theorem~$\ref{thm:quartic}$.
To complete the proof of Theorem~$\ref{thm:quartic}$, we need to show that all connected quartic graphs with minimum algebraic connectivity belong to $\M$.
In fact, it might be possible that $\Gamma$ is transformed  (by means of proper switchings) to a graph $G\in\M$, where we still have $\mu(\Gamma)=\mu(G)$.
We show that, under these circumstances, $\Gamma$  must be isomorphic to $G$.

\begin{remark}\rm Considering the structure of the graphs $G\in\M$, we regard the vertices drawn vertically above each other as a  cell.
The cells of $G$, in fact  constitute an `equitable partition' of $G$.
Each cell contains one or two vertices (except for the first cells in $D_1,D'_3,D'_4$, or some cells in the $G_i$'s (of Figure~\ref{fig:G5-G9}) that have size $4$ and $3$, respectively).
Further, we know that the weights on the vertices of $G$ given by a Fiedler vector $\rho$ of $G$ are non-increasing from left to right.
 We may assume that the vertices that are in the same cell have the same weight. Otherwise, let $\rho'$ be a vector  obtained from $\rho$ by interchanging the weights of the vertices within all cells (in fact this is carried out by the action of an automorphism of $G$, which also works for the first cells in $D_1,D'_3,D'_4$).
Then $\rho'$ and thus $\rho+\rho'$ is an eigenvector corresponding to $\mu(G)$ where $\rho+\rho'$ is  constant on  each cell.
Thus we may assume that $\rho$ is  a  non-increasing eigenvector for $\mu(G)$ and is  constant on each cell.
The above argument may not work for $G_{8'}$, but for this small graph this can be done by direct inspection.
\end{remark}

\begin{lemma}\label{lem:strictly}
Let $G\in\M$ and $\rho$  be a  non-increasing Fiedler vector of $G$  which is  constant on each cell. Then $\rho$ is indeed strictly decreasing on the cells from left to right. 
\end{lemma}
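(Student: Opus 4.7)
The plan is to argue by contradiction. Suppose $x_i = x_{i+1} = c$ for some pair of consecutive cells $C_i, C_{i+1}$; the strategy will be to combine the eigenvalue equations at these two cells (and possibly at an adjacent singleton cell) to force $c = 0$, and then to propagate this zero value to every cell, contradicting $\rho \neq \mathbf{0}$.

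Since $\Pi$ is equitable, the cell-value vector $x = (x_1,\ldots,x_k)^\top$ is an eigenvector of the quotient Laplacian $\tilde L$ with eigenvalue $\mu(G)$; explicitly, for each cell $C_j$,
\[
\sum_{\ell} d_{j\ell}\,(x_j - x_\ell) \;=\; \mu(G)\, x_j,
\]
where $d_{j\ell}$ is the common number of neighbors in $C_\ell$ of any vertex of $C_j$, and $\sum_\ell d_{j\ell} = 4$ by quarticity. First I would classify $(C_i, C_{i+1})$ by the structural roles of these cells in $\M$: singleton cells (cut vertices and middle singletons in X-shapes) and pair cells, the latter being either tridiagonal in the quotient or carrying a ``skip'' edge across an X-shape to another pair two cells away. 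In the simplest case, when both $C_i$ and $C_{i+1}$ are tridiagonal, substituting $x_{i+1} = c$ into the equation at $C_i$ cancels the $\ell = i+1$ term and leaves $d_{i,i-1}(c - x_{i-1}) = \mu c$; since $x_{i-1} \ge c$, this forces $\mu c \le 0$, hence $c \le 0$. Symmetrically, the equation at $C_{i+1}$ yields $\mu c \ge 0$, and therefore $c = 0$. In the remaining cases, where one of $C_i, C_{i+1}$ is a pair cell of an X-shape and hence carries a skip term, I would eliminate that skip term by substitution using the equation at the adjacent middle-singleton cell, which is always tridiagonal; this leads to the two identities
\[
\alpha\,(c - x_{i-1}) \;=\; \mu c, \qquad \beta\,(c - x_{i+2}) \;=\; \mu c
\]
for positive constants $\alpha, \beta$, and the same sign argument then yields $c = 0$.

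Once $c = 0$ is established, the two displayed identities give $x_{i-1} = x_{i+2} = 0$, so the zero value of $\rho$ spreads to the adjacent cells. Iterating the argument at the enlarged zero interval will propagate zeros through every cell, giving $\rho = \mathbf{0}$---the desired contradiction. The boundary cases $i = 1$ or $i+1 = k$ will be handled by the truncated eigenvalue equation at the extremal cell; in particular, for the end blocks $D_1, D'_3, D'_4$ whose first cell has size $4$, the equation at that first cell forces $c = 0$ directly, since under $x_1 = x_2 = c$ both nonzero terms involve $(c - c)$. The main obstacle will be the fairly lengthy case analysis to verify the two identities above across all structural types of $(C_i, C_{i+1})$ arising in $\M$---the middle blocks $M, M_1, M_2, M_3, \tilde M_3$, the long blocks built from the $M'_j, M''_j, D'_j$ bricks, and the end blocks---together with the exceptional small graphs $G_5, \ldots, G_9$, which will be settled by direct spectral inspection using the cells displayed in Figure~\ref{fig:G5-G9}.
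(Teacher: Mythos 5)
Your plan is sound, but it takes a genuinely different route from the paper. The paper's proof is a one-step variational argument: if two adjacent vertices $a\sim b$ in different cells have $\rho_a=\rho_b$ and one of them has a neighbour of different weight, then the neighbour-sums satisfy $\alpha>\beta$, and the perturbation $\rho\mapsto\rho+\epsilon(e_a-e_b)$ preserves orthogonality to $\mathbf 1$ while raising the adjacency Rayleigh quotient strictly above $\lambda=4-\mu(G)$ for $0<\epsilon<(\alpha-\beta)/(1+\lambda)$, contradicting the max characterization of the second adjacency eigenvalue. Your route instead exploits the equitable partition: the quotient eigenvalue equations, a two-sided sign argument forcing the common cell value $c$ to satisfy $\mu c\le 0$ and $\mu c\ge 0$ (hence $c=0$), and then propagation of zeros to contradict $\rho\ne\mathbf 0$. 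Both work; the paper's argument is shorter and nearly uniform (its only structural input is the inequality $\alpha\ge\beta$, checked cell type by cell type), whereas yours is more mechanical but must track the quotient adjacency pattern of every configuration. Two caveats on your version, neither fatal: (i) the dichotomy ``tridiagonal or X-shape skip'' is not exhaustive over $\M$ --- in $D_2$ the cells $\{r_3,r_4\}$ and $\{r_5\}$ are consecutive in the left-to-right order but \emph{not} adjacent, and the quotient contains a $4$-cycle, so your two displayed identities do not hold verbatim there (subtracting the two cell equations first forces the next cell to share the value $c$, after which the sign argument closes); (ii) in the zero-propagation step the leftmost zero cell need not be quotient-adjacent to its immediate predecessor (again $D_2$), but monotonicity of the $x_j$ then squeezes the skipped cell to zero, so the zero set remains an interval and the induction goes through. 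With those cases added, your argument is a complete and valid alternative proof.
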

\begin{proof}
By contradiction, suppose that there are two vertices $a,b$ in two different cells with the same weight under $\rho$. We may assume that
$a\sim b$ and that at least one of $a$ or $b$ has a neighbor $c$ with  $\rho_c\ne\rho_a=\rho_b$.
 Let $\alpha$ and $\beta$ be the sum of the weights of the neighbors of $a$ and $b$, respectively.
Then, from the structure of the graphs in $\M$, it is evident that $\alpha\ge\beta$. But we have the strict inequality $\alpha>\beta$ by the existence of $c$.

We may suppose that $\|\rho\|=1$. Let $\la$ be the second largest eigenvalue of the adjacency matrix $A$ of $G$.
Then $\mu(G)=4-\la$ and $\la=\rho^\top A\rho$.
We choose a real $\epsilon$ with $0<\epsilon<(\alpha-\beta)/(1+\la)$.
Now, in the vector $\rho$ we replace the weights of $a$ and $b$ by $\rho_a+\epsilon$ and $\rho_b-\epsilon$, respectively, to obtain a new vector $\rho'$.
As $\rho\perp\bf1$, we have $\rho'\perp\bf1$.
We have
$$\la=\max_{\x\neq 0,\, \x\perp\bf1}\frac{\x ^\top A\x}{\x ^\top \x}\ge\frac{\rho'^\top A\rho'}{\rho'^\top\rho'}=\frac{\la+2\epsilon(\alpha-\beta-\epsilon)}{1+2\epsilon^2},$$
where the right hand side is larger than $\la$ by the choice of $\epsilon$, a contradiction.
\end{proof}
\begin{lemma}\label{lem:iso}
Any proper elementary move on a graph in $\M$  leaves a graph isomorphic to the original.
\end{lemma}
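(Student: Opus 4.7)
The plan is to combine the strict monotonicity of Fiedler values across cells (Lemma~\ref{lem:strictly}) with a structural inspection of the blocks of $G$. Fix a Fiedler vector $\rho$ of $G$ that is constant on each cell, so that by Lemma~\ref{lem:strictly} its values on the cells strictly decrease from left to right. Suppose $\sw(a,b,c,d)$ is a proper move on $G$: $a\sim b$, $c\sim d$, $a\nsim c$, $b\nsim d$, $\rho_a\ge\rho_d$, and $\rho_c\ge\rho_b$. Writing $C(v)$ for the cell of $v$, strict monotonicity translates the two $\rho$-inequalities into $C(a)\le C(d)$ and $C(c)\le C(b)$ in the left-to-right cell order.

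The first and main step is to show that in fact $C(a)=C(d)$ and $C(b)=C(c)$. I would argue this by inspecting each block in the definition of $\M$. In every short block $M,M_1,M_2,M_3,D_1,\ldots,D_4$, and in every brick of a long block, the induced subgraph on any pair of cells is one of $K_2$, $K_{2,2}$, $K_4$, $K_4-e$, or a four-cycle ``X-shape''; in particular, the only edges of $G$ that skip a cell are the X-shape edges, and they always come in a matched pair between one fixed pair of cells. A cell-by-cell check then shows that whenever $C(a)$ is strictly to the left of $C(d)$, or $C(c)$ strictly to the left of $C(b)$, either $a\sim c$ or $b\sim d$ is forced by the $K_4$ or $K_{2,2}$ structure between the intervening cells, contradicting the properness of the switch.

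Once $C(a)=C(d)=:X$ and $C(b)=C(c)=:Y$ are established, both original edges $ab,cd$ and both new edges $ac,bd$ lie between the cells $X$ and $Y$, so the switch preserves the equitable cell partition. The resulting graph $G'$ then differs from $G$ only by a permutation of labels inside $X$ and $Y$, and this permutation is realized by an automorphism of $G$: each block admits a ``vertical flip'' automorphism that simultaneously swaps the two vertices of every paired size-two cell, and a suitable restriction of this flip carries the switched configuration back to $G$. Cells of size larger than two occur only in the small exceptional graphs $G_5,\dots,G_9$ and in the initial cells of $D_1,D'_3,D'_4$, where any two vertices of the cell are interchangeable by direct inspection; in these cases $G'\cong G$ is immediate.

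The main obstacle is the first step, since the catalogue of blocks in $\M$ is long. However, each individual case is routine: the induced subgraph on any two cells of a block is drawn from a very short list of highly symmetric small graphs, in each of which the conditions $a\nsim c$, $b\nsim d$ together with the $\rho$-monotonicity leave no room for a proper switch that violates cell monotonicity.
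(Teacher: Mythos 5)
Your proposal is correct and follows essentially the same route as the paper: use the strict monotonicity of the Fiedler vector on cells (Lemma~\ref{lem:strictly}) to force the four switched vertices to pair up inside two cells, and then observe that flipping the matching between those two cells is undone by a (partial) vertical-flip relabeling, so the result is isomorphic to the original. The paper's own proof simply asserts the block-by-block inspection that you outline; neither version writes out the exhaustive case check, so your argument is at the same level of detail and reaches the same conclusion.
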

\begin{proof}
  For the graphs in $\M$, with a Fiedler vector which satisfies Lemma~\ref{lem:strictly}, proper switchings cannot be found  except   when $a,b$ are in the same cell, and $c,d$ are in the same cell, $a\sim c$, $b\sim d$, $a\nsim d$, and $b\nsim c$. In this case, $\sw(a, c, d, b)$ leaves a graph isomorphic to the original. Also, any proper elementary move on $G_5, G_6, G_7, G_8, G_{8^\prime}, G_9$,  and $D_1$, $D_2$, $D'_3$, and $D'_4$ gives a structure isomorphic to themselves.
 \end{proof}

 Now we can settle the `second half' of Theorem~\ref{thm:quartic}.
 The following theorem, combined with Theorem~\ref{thm:quarticTOmaximal}, completes the proof of Theorem~\ref{thm:quartic}.

\begin{theorem}\label{thm:HisoG}
 Let $\Gamma$ be a connected quartic graph such that after a sequence of proper switchings, it is turned to $G\in \M$.
   If $\mu(\Gamma)=\mu(G)$,  then $\Gamma$ is isomorphic to $G$.
\end{theorem}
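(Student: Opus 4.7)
The plan is to induct on the length $k$ of the switching sequence $\Gamma=\Gamma_0\to\Gamma_1\to\cdots\to\Gamma_k=G$. The case $k=0$ is trivial. For $k\ge1$, Lemma~\ref{sw} tells us that each proper switch cannot increase the algebraic connectivity, so the assumption $\mu(\Gamma)=\mu(G)$ forces $\mu(\Gamma_0)=\mu(\Gamma_1)=\cdots=\mu(G)$.

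Let the first switch $\Gamma_0\to\Gamma_1$ be $\sw(a,b,c,d)$, proper with respect to a Fiedler vector $\rho$ of $\Gamma_0$, so $\rho_a\ge\rho_d$ and $\rho_c\ge\rho_b$. I would first observe that the same vector $\rho$ is a Fiedler vector of $\Gamma_1$: it is still orthogonal to $\mathbf{1}$, and its Rayleigh quotient on $\Gamma_1$ is squeezed between $\mu(\Gamma_1)$ and $\mu(\Gamma_0)$, which are equal. A direct expansion then yields
\[
\rho^\top L(\Gamma_0)\rho-\rho^\top L(\Gamma_1)\rho=2(\rho_a-\rho_d)(\rho_c-\rho_b),
\]
and since this difference vanishes while both factors are nonnegative, we must have $\rho_a=\rho_d$ or $\rho_b=\rho_c$.

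The central observation is that the \emph{reverse} edge-exchange, which takes $\Gamma_1$ back to $\Gamma_0$ by deleting $ac,bd$ and inserting $ab,cd$, can be written either as $\sw(a,c,b,d)$ (proper when $\rho_a\ge\rho_d$ and $\rho_b\ge\rho_c$) or as $\sw(d,b,c,a)$ (proper when $\rho_d\ge\rho_a$ and $\rho_c\ge\rho_b$). In the case $\rho_b=\rho_c$ the first form is proper with respect to $\rho$; in the case $\rho_a=\rho_d$ the second form is. Either way, the reverse move is a proper switch on $\Gamma_1$.

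By the inductive hypothesis applied to the shorter sequence $\Gamma_1\to\cdots\to\Gamma_k=G$, we have $\Gamma_1\cong G$, so $\Gamma_1\in\M$. Lemma~\ref{lem:iso} applied to the reverse proper switch on $\Gamma_1$ yields a graph isomorphic to $\Gamma_1$; that graph is $\Gamma_0=\Gamma$, so $\Gamma\cong\Gamma_1\cong G$, closing the induction. The main delicate point, which I expect to require the most care, is the interaction between the Fiedler vector $\rho$ inherited from $\Gamma_0$ and the cell-constant, strictly decreasing Fiedler vector of $\Gamma_1\in\M$ used in the proof of Lemma~\ref{lem:iso}: one must be sure that Lemma~\ref{lem:iso} genuinely applies to proper switches with respect to \emph{any} Fiedler vector, or else symmetrize $\rho$ across the cell-swapping automorphisms of $\Gamma_1$ and verify that the reverse switch remains proper with respect to the symmetrized vector.
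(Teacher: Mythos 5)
Your proposal is correct and follows essentially the same route as the paper: the paper also uses the equality $\mu(G_0)=\cdots=\mu(G_t)$, the identity $\rho^\top L(G_{i-1})\rho-\rho^\top L(G_i)\rho=2(\rho_a-\rho_d)(\rho_c-\rho_b)$ to force $\rho_a=\rho_d$ or $\rho_b=\rho_c$, the observation that the reverse exchange is then itself a proper switch, and Lemma~\ref{lem:iso}; the only cosmetic difference is that you induct from the first switch while the paper peels off the last one. The delicate point you flag (that Lemma~\ref{lem:iso} is proved for the cell-constant, strictly decreasing Fiedler vector, whereas the inherited $\rho$ is a priori an arbitrary Fiedler vector) is present in the paper's argument as well and is handled there only implicitly.
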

\begin{proof}
Let $\sw_1,  \ldots, \sw_t$ be a sequence of proper switchings which turn $\Gamma$  into $G$.
Consider the graphs $\Gamma=G_0, G_1, \ldots, G_t=G$ in which $G_i$ is obtained from $G_{i-1}$ by applying $\sw_i$.
Since $\mu(\Gamma)=\mu(G)$, we have $\mu(G_i)=\mu(G)$,  for $i=1,\ldots, t$.
Let $\sw_t=\sw(a, b, c, d)$. Then
$$0=\mu(G_{t-1})-\mu(G)\geq \rho^\top L(G_{t-1})\rho-\rho^\top L(G)\rho= 2(\rho_a-\rho_d)(\rho_c-\rho_b)\geq 0.$$
It follows that  $\rho_a=\rho_d$ or $\rho_c=\rho_b$.
Without loss of generality, suppose that $\rho_a=\rho_d$. From Lemma~\ref{lem:strictly}  it then  follows that
$a,d$ are in the same cell  of $G$. Note that $\sw(d, b, c, a)$ is the reverse of $\sw(a, b, c, d)$, and so,  when  applied on $G$,  yields  $G_{t-1}$.
However, $\sw(d, b, c, a)$ is indeed a proper switching, and so by Lemma~\ref{lem:iso}, $G_{t-1}$ must be isomorphic to $G$.
 Similarly, it follows that all $G_i$, for $i=0,\ldots,t-2$, are isomorphic to $G$.
\end{proof}

\subsection{Concluding Remarks}
By Theorem~\ref{thm:quartic} it can be seen that the connected quartic graphs on $n \leq 10$ vertices  with minimum spectral gap are $G_5, G_6, G_7, G_8, G_9$, and the graph of Figure~\ref{fig:lemma3.9.8c}, respectively.
For $n\ge11$, we pose the following conjecture on the puniness and the precise structure of
 the  connected quartic graphs   with minimum spectral gap. The conjecture suggests that in such graphs
 all middle blocks are $M_1$ and end blocks are one of the short blocks $D_1,D_2,D_4$ or the block $D_5$ given in Figure~\ref{fig:BlockMin}.
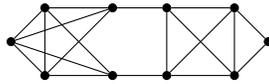
\begin{figure}[h!]
\centering
\begin{tikzpicture}[scale=0.9]
	\vertex[fill] (r1) at (0,0) [] {};
	\vertex[fill] (r3) at (.5,.5) [] {};
	\vertex[fill] (r2) at (.5,-.5) [] {};
	\vertex[fill] (r5) at (1.5,.5) [] {};
	\vertex[fill] (r4) at (1.5,-.5) [] {};
\vertex[fill] (r6) at (2.3,-.5) [] {};
\vertex[fill] (r7) at (2.3,.5) [] {};
\vertex[fill] (r8) at (3.3,-.5) []{};
\vertex[fill] (r9) at (3.3,.5) [] {};
\vertex[fill] (r10) at (3.8,0) [] {};
	\path
		(r1) edge (r2)
		(r1) edge (r3)
	   (r1) edge (r4)
	   (r1) edge (r5)
		(r2) edge (r3)
		(r2) edge (r4)
        (r2) edge (r5)
		(r3) edge (r4)
	    (r3) edge (r5)
	    (r6) edge (r4)
		(r5) edge (r7)
	    (r6) edge (r7)
		(r6) edge (r8)
	    (r6) edge (r9)
		(r8) edge (r7)
	    (r7) edge (r9)
		(r8) edge (r9)
    	(r8) edge (r10)
		(r9) edge (r10);
\end{tikzpicture}
\caption{The block $D_5$}\label{fig:BlockMin}
\end{figure}

 \begin{conjecture}\label{conj:MinQuartic}\rm  The connected quartic graph on $n\ge11$ vertices with minimum spectral gap is the unique graph $G$ described below.
Let $q$ and $r<5$ be non-negative integers such that $n-11=5q+r$.
Then $G$ consists of $q$ middle blocks $M_1$ and each end block is one of $D_1,D_2,D_4$, or $D_5$. If $r=0$, then both end blocks are $D_4$. If $r=1$, then  the end blocks are $D_4$ and $D_1$. If $r=2$, then both end blocks are $D_1$. If $r=3$, then  the end blocks are $D_1$ and $D_2$.  Finally, if  $r=4$, then  the end blocks are $D_4$ and $D_5$. 
\end{conjecture}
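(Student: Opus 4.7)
By Theorem~\ref{thm:quartic}, it suffices to restrict attention to the family $\M$ of path-like quartic graphs with the prescribed block types, and to show that the graph described in the conjecture is the unique minimizer of $\mu$ within $\M$. The plan is to establish sharp asymptotics $\mu(G)=(1+o(1))\pi^2/L(G)^2$ for every $G\in\M$, where $L(G)$ is an effective length determined by the block sequence of $G$, and then to show by a combinatorial argument that the conjecture's choice of blocks is the unique one maximizing $L(G)$ for fixed $n$.

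First I would adapt the sandwich argument of Theorem~\ref{thm:cubic}. For the upper bound, construct a skew-symmetric test vector $\x'$ on $G$ by setting $x_i=\cos\bigl((2i-1)\pi/(2\sigma(G))\bigr)$ on the cut-vertex cells (where $\sigma(G)$ is the number of cut vertices), interpolating by $\tfrac12(x_i+x_{i+1})$ on each interior cell of a middle block, and extending symmetrically into the end blocks; a Rayleigh-quotient calculation analogous to \eqref{eq:cos-sin}--\eqref{eq:sin^2} then yields the upper bound. For the lower bound, generalize Lemma~\ref{lem:sign} to the quartic setting so that a Fiedler vector $\y$ of $G$ may be taken skew-symmetric and constant on each cell of the vertical partition $\Pi$. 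The midpoint-averaging identity $\min_t\bigl\{2(z-t)^2+2(t-z')^2\bigr\}=(z-z')^2$, applied block-by-block as in the derivation of \eqref{eq:P_{2m+6}}, then bounds $\mu(G)$ below by the Rayleigh quotient of the cut-vertex vector $\z$ on a weighted path, and Fiedler's formula $\mu(P_h)=2(1-\cos(\pi/h))$ supplies the matching $(1+o(1))\pi^2/L(G)^2$ asymptotic.

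The optimization then becomes purely combinatorial via the identity $n=\sum_B|B|-(\text{cut vertex count})$. For $q$ middle blocks of a single type $B_{\text{mid}}$ and two end blocks of sizes $d_1,d_2$, this gives $n=q\bigl(|B_{\text{mid}}|-1\bigr)+d_1+d_2-1$, so each middle block contributes $|B_{\text{mid}}|-1$ vertices per unit of block-tree length. That ratio is $5$ for $M_1$, $6$ for $M$, $7$ for $M_2$, $8$ for $M_3$ and $\tilde M_3$, and at least $9$ for any long middle block (the smallest, $M'_1+\tilde M'_1$, already costs $10$ vertices for a single unit of length); hence $M_1$ is uniquely optimal among middle blocks. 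For end blocks, the analogous tally over $D_1,\ldots,D_4$ and the long end blocks (the smallest of which, $D'_4+\tilde M'_1$, is precisely the block $D_5$ introduced in the conjecture) forces $d_1+d_2=12+r$, and enumerating admissible pairs summing to $12+r$ recovers the five cases of the conjecture.

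The main obstacle is the residual ambiguity for $r=2,3,4$, where several distinct pairs $(d_1,d_2)$ realize the minimum $d_1+d_2$ (for instance $(D_4,D_2)$ vs.\ $(D_1,D_1)$ for $r=2$, or $(D_4,D_5)$ vs.\ $(D_1,D_3)$ vs.\ $(D_2,D_2)$ for $r=4$). Here the leading-order asymptotic $\pi^2/L(G)^2$ no longer discriminates, so one must compute the $o(1)$ correction, which depends on the explicit Fiedler-vector profile inside each end block; the fact that neither the symmetric choice $D_2+D_2$ for $r=4$ nor the pair $(D_4,D_3)$ for $r=3$ is the conjectured optimum signals that a delicate analysis is needed, weighing the ``stretch'' each end block imparts to the eigenvector against its vertex cost. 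A final Theorem~\ref{thm:HisoG}-style uniqueness argument based on equality in the proper-switching inequality will then pin down the extremal graph exactly.
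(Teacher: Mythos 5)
There is no proof in the paper to compare against: the statement you are proving is posed explicitly as Conjecture~\ref{conj:MinQuartic}, and the authors leave it open. What you have written is a plausible programme, not a proof, and it contains gaps that are exactly the reasons the statement remains conjectural. The central one you concede yourself: the leading-order asymptotic $\mu(G)=(1+o(1))\pi^2/L(G)^2$ cannot separate competing block configurations that realize the same effective length $L$ for a given $n$, so the cases $r=2,3,4$ (where, as you note, pairs such as $(D_4,D_2)$ versus $(D_1,D_1)$ tie at leading order) are left entirely unresolved; the ``delicate analysis'' of the lower-order correction is the whole difficulty, and no such computation is supplied. Moreover, an $o(1)$ estimate is a statement about $n\to\infty$ and by itself proves nothing about any fixed $n\ge11$; to conclude that the conjectured graph is the minimizer for \emph{every} such $n$ you would need explicit, uniform error bounds on both sides of the sandwich, which the cubic argument of Theorem~\ref{thm:cubic} does not provide even in its own setting (there it is only used to compute an asymptotic value, not to compare two specific graphs of the same order).

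Two further steps are asserted without justification. First, the quartic analogue of Lemma~\ref{lem:sign} (that a Fiedler vector of $G\in\M$ can be taken constant on cells, monotone, and skew symmetric) is not available: the skew-symmetry argument in the cubic proof relies on the mirror symmetry of $G_n$, whereas a generic member of $\M$ with two different end blocks has no such automorphism, and monotonicity of the Fiedler vector across arbitrary short and long blocks would itself require proof. Second, your combinatorial optimization treats each block as contributing one unit to an effective path length, but a long block has internal bricks that also stretch the eigenvector; collapsing it to a single edge of the comparison path distorts $L(G)$ in a direction that is not obviously conservative, so the claim that long blocks are always suboptimal does not follow from the vertex tally alone. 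Until these points are addressed the argument establishes, at best, heuristic support for the conjecture.
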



\section*{Acknowledgment}
The research of the  second author was in part supported by a grant from IPM (No. 98050211).
The authors would like to thank anonymous referees for constructive comments which led to improvement of the presentation of the paper.

\end{document}